\definecolor{green_dark}{rgb}{0,0.6,0}
\newcommand{\N}{\mathbb N}
\newcommand{\Z}{\mathbb Z}
\newcommand{\R}{\mathbb R}
\newcommand{\C}{\mathbb C}
\newcommand{\Gact} {\gamma_{\text{c}}}
\newcommand{\Gace} {\gamma_{\emph{c}}}
\newcommand{\re}[1]{\mbox{Re} \ #1} 
\newcommand{\im}[1]{\mbox{Im} \ #1} 
\newcommand{\scal}[1]{\left\langle #1 \right\rangle} 
\newcommand{\defendproof}{\hfill $\Box$} 
\newtheorem{theorem}{Theorem}[section]
\newtheorem{defi}[theorem]{Definition}
\newtheorem{lem}[theorem]{Lemma} 
\newtheorem{prop}[theorem]{Proposition}
\newtheorem{coro}[theorem]{Corollary} 
\theoremstyle{definition}
\newtheorem{rem}[theorem]{Remark}
\title[Global existence scattering nonlinear fourth-order Schr\"odinger]{Global existence and scattering for a class of nonlinear fourth-order Schr\"odinger equation below the energy space} 
\author[V. D. Dinh]{Van Duong Dinh}
\address[V. D. Dinh]{Institut de Math\'ematiques de Toulouse UMR5219, Universit\'e Toulouse CNRS, 31062 Toulouse Cedex 9, France}
\email{dinhvan.duong@math.univ-toulouse.fr}
\keywords{Nonlinear fourth-order Schr\"odinger equation; Global well-posedness; Scattering; Almost conservation law; Morawetz inequality}
\subjclass[2010]{35G20, 35G25, 35Q55}
\begin{document}

\maketitle
\begin{abstract}
In this paper, we consider a class of nonlinear fourth-order Schr\"odinger equation, namely
\[
\left\{
\begin{array}{rcl}
i\partial_t u +\Delta^2 u &=&-|u|^{\nu-1} u, \quad 1+ \frac{8}{d}<\nu <1+\frac{8}{d-4},\\
u(0)&=&u_0 \in H^\gamma(\R^d), \quad 5 \leq d \leq 11.
\end{array}
\right. 
\]
Using the $I$-method combined with the interaction Morawetz inequality, we establish the global well-posedness and scattering in $H^\gamma(\R^d)$ with $\gamma(d,\nu)<\gamma<2$ for some value $\gamma(d,\nu)>0$.    
\end{abstract}


\section{Introduction}
\setcounter{equation}{0}
Consider the following nonlinear fourth-order Schr\"odinger equation
\begin{align}
\left\{
\begin{array}{rcl}
i\partial_t u(t,x) + \Delta^2 u(t,x) &=& -(|u|^{\nu-1} u)(t,x), \quad t\in \R, x\in \R^d, \\
u(0,x) &=& u_0(x) \in H^\gamma(\R^d), 
\end{array}
\right.
\tag{NL4S}
\end{align}
where $u(t,x)$ is a complex valued function in $\R \times \R^d, d\geq 5$. The nonlinear exponent $\nu$ is assumed to be mass-supercritical, i.e $\nu>1+\frac{8}{d}$ and energy-subcritical, i.e. $\nu<1+\frac{8}{d-4}$. 
The regularity exponent $\gamma$ is assumed to satisfy $0<\gamma<2$.\newline
\indent The fourth-order Schr\"odinger equation was introduced by Karpman \cite{Karpman} and Karpman-Shagalov \cite{KarpmanShagalov} to take into account the role of small fourth-order dispersion terms in the propagation of intense laser beams in a bulk medium with Kerr nonlinearity. Such a fourth-order Schr\"odinger equation is of the form
\begin{align}
i\partial_t u + \Delta^2 u+ \varepsilon \Delta u + \mu|u|^{\nu-1}u=0, \quad u(0)=u_0, \label{general fourth order schrodinger}
\end{align}
where $\varepsilon \in \{0,\pm 1\}, \mu \in \{\pm 1\}$ and $\nu>1$. We note that (NL4S) is a special case of $(\ref{general fourth order schrodinger})$ by taking $\varepsilon=0$ and $\mu=1$. The nonlinear fourth-order Schr\"odinger equation $(\ref{general fourth order schrodinger})$ has attracted a lot of interest in a past decay. The sharp dispersive estimates for the linear part of $(\ref{general fourth order schrodinger})$ were established in \cite{Ben-ArtziKochSaut}. The local well-posedness and the global well-posedness for $(\ref{general fourth order schrodinger})$ has been widely studied in \cite{Dinhfract, Dinhfourth, Dinhmass, Dinhblowup, Guo, GuoCui06, HaoHsiaoWang06, HaoHsiaoWang07, HuoJia, MiaoXuZhao09, MiaoXuZhao11, MiaoWuZhang, MiaoZhang, Pausader, Pausadercubic, PausaderShao10} and references therein. \newline
\indent The (NL4S) enjoys a natural scaling invariance, that is if we set for $\lambda>0$
\begin{align}
u_\lambda(t,x):= \lambda^{-\frac{4}{\nu-1}} u(\lambda^{-4}t, \lambda^{-1}x), \label{scaling definition}
\end{align}
then for $T\in(0,+\infty]$,
\[
u \text{ solves (NL4S) on } (-T,T) \Longleftrightarrow u_\lambda \text{ solves (NL4S) on } (-\lambda^4T, \lambda^4T).  
\]
We define the critical regularity exponent for (NL4S) by
\begin{align}
\Gact:=\frac{d}{2}-\frac{4}{\nu-1}. \label{critical regularity exponent}
\end{align}
The (NL4S) is known (see \cite{Dinhfract} or \cite{Dinhfourth}) to be locally well-posed in $H^\gamma(\R^d)$ with $\gamma \geq \max\{0,\Gact\}$ satisfying for $\nu$ is not an odd integer, 
\begin{align}
\lceil \gamma \rceil \leq \nu. \label{regularity condition}
\end{align} 
Here $\lceil \gamma \rceil$ is the smallest integer greater than or equal to $\gamma$. This condition ensures the nonlinearity to have enough regularity. In the sub-critical regime, i.e. $\gamma>\Gact$, the time of existence depends only on the $H^\gamma$-norm of initial data. Moreover, the local solution enjoys mass conservation, i.e.
\[
M(u(t)):=\|u(t)\|^2_{L^2(\R^d)} = \|u_0\|^2_{L^2(\R^d)},
\] 
and $H^2$-solution has conserved energy, i.e.
\[
E(u(t)):=\int_{\R^d} \frac{1}{2}|\Delta u(t,x)|^2 + \frac{1}{\nu+1}|u(t,x)|^{\nu+1} dx = E(u_0).
\]
The persistence of regularity (see \cite{Dinhfourth}) combined with the conservations of mass and energy yield the global well-posedness for (NL4S) in $H^\gamma(\R^d)$ with $\gamma\geq 2$ satisfying for $\nu$ is not an odd integer, $(\ref{regularity condition})$. In the critical regime, i.e. $\gamma=\Gact$, one also has (see \cite{Dinhfract} or \cite{Dinhfourth}) the local well-posedness for (NL4S) but the time of existence depends not only on the $H^\gamma$-norm of initial data but also on its profile. Moreover, for small initial data, the (NL4S) is globally well-posed, and the solution is scattering. \newline
\indent The main goal of this paper is to show the global well-posedness and scattering for the nonlinear fourth-order Schr\"odinger equation (NL4S) below the energy space. Our arguments are based on the combination of the $I$-method and the interaction Morawetz inequality which are similar to those of \cite{VisanZhang}. However, there are some difficulties due to the high-order dispersion term $\Delta^2u$. Moreover, in order to successfully establish the almost conservation law, we need the nonlinearity to have at least two orders of derivatives. This leads to the restriction in spatial space of dimensions $5\leq d \leq 11$. \newline
\indent Before stating our main result, let us recall some known results concerning the global existence below the energy space for the nonlinear fourth-order Schr\"odinger equation. To our knowledge, Guo in \cite{Guo} gave a first answer to this problem. In \cite{Guo}, the author considered $(\ref{general fourth order schrodinger})$ with $\nu-1=2m, m\in \N$ satisfying $4<md<4m+2$, and established the global existence in $H^\gamma(\R^d)$ with $1+\frac{md-9+\sqrt{(4m-md+7)^2+16}}{4m}<\gamma<2$. The proof is based on the $I$-method which is a modification of the one invented by $I$-Team \cite{I-teamalmost} in the context of nonlinear Schr\"odinger equation. Later, Miao-Wu-Zhang in \cite{MiaoWuZhang} studied the defocusing cubic fourth-order Schr\"odinger equation, i.e. $\nu=3$ in (NL4S), and proved the global well-posedness and scattering in $H^\gamma(\R^d)$ with $\gamma(d)<\gamma<2$ where $\gamma(5)=\frac{16}{11}, \gamma(6)=\frac{16}{9}$ and $\gamma(7)=\frac{45}{23}$. The proof relies on the combination of the $I$-method and a new interaction Morawetz inequality. Recently, in \cite{Dinhmass} the author considered the defocusing cubic higher-order Schr\"odinger equation including the cubic fourth-order Schr\"odinger equation, and showed that the (NL4S) with $\nu=3$ is globally well-posed in $H^\gamma(\R^4)$ with $\frac{60}{53}<\gamma<2$. The argument makes use of the $I$-method and the bilinear Strichartz estimate. The analysis is carried out in Bourgain spaces $X^{\gamma,b}$ which is similar to those in \cite{I-teamalmost}. In the above considerations, the nonlinearity is algebraic, i.e. $\nu$ is an odd integer. This allows to write the commutator between the $I$-operator and the nonlinearity explicitly by means of the Fourier transform, and then carefully control the frequency interactions using multi-linear analysis. When one considers the nonlinear fourth-order Schr\"odinger equation (NL4S) with $\nu>1$ is not an odd integer, this method does not work. We thus rely purely on Strichartz and interaction Morawetz estimates. \newline
\indent Let us now introduce some notations. 
\begin{align}
\gamma(d,\nu):=\max \{\gamma_1(d,\nu), \gamma_2(d,\nu), \gamma_3(d,\nu), \gamma_4(d,\nu)\}, \label{define gamma d nu}
\end{align}
where
\begin{align*}
\gamma_1(d,\nu) &:=\frac{3}{2}+\frac{\Gact}{4}, \\
\gamma_2(d,\nu) &:= 4-\nu, \\
\gamma_3(d,\nu) &:=\frac{2}{\nu-1} +\frac{(\nu-2)\Gact}{\nu-1}, \\
\gamma_4(d,\nu) &:=\min_{\sigma \in (0,\sigma_0]} \gamma(d,\nu,\sigma).
\end{align*}
Here $\sigma_0$ satisfies 
\begin{align}
\left\{ 
\begin{array}{rcl}
2\sigma_0(16-(\nu-1)(d+4))&<&(d-5)(d(\nu-1)-8), \\
2\sigma_0(\nu-3)&\leq& d-5, \\
\sigma_0 &\leq& \gamma,
\end{array}
\right. \label{condition sigma_0}
\end{align}
and $\gamma(d,\nu,\sigma)$ is the (large if there are two) root of the equation
\[
\Gact(2-\gamma)(d-5+(8-d)\sigma) = \min \left\{ \gamma-1-\frac{\Gact}{2}, \nu-2, (\nu-2) (\gamma-\Gact) \right\} (\gamma-\Gact)\sigma.
\]
The main result of this paper is the following:
\begin{theorem} \label{theorem global existence scattering}
Let $5\leq d\leq 11$. The initial value problem \emph{(NL4S)} is globally well-posed in $H^\gamma(\R^d)$ for any $\gamma(d,\nu)<\gamma<2$, and the global solution $u$ enjoys the following uniform bound
\[
\|u\|_{L^\infty(\R, H^\gamma(\R^d))} \leq C(\|u_0\|_{H^\gamma(\R^d)}).
\]
Moreover, the solution is scattering, i.e. there exist unique $u^\pm_0\in H^\gamma(\R^d)$ such that
\[
\lim_{t\rightarrow \pm\infty} \|u(t)-e^{it\Delta^2} u^\pm_0\|_{H^\gamma(\R^d)} =0.
\]
\end{theorem}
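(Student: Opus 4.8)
\emph{Proof proposal.} The plan is to combine the $I$-method with an interaction Morawetz estimate adapted to the biharmonic operator, in the spirit of \cite{VisanZhang} and \cite{MiaoWuZhang}. By the local well-posedness theory in $H^\gamma$ recalled above and the time-reversal symmetry $u(t,x)\mapsto\overline{u(-t,x)}$, it suffices to establish, for positive times, an a priori bound $\|u\|_{L^\infty_t([0,T],H^\gamma)}\le C(\|u_0\|_{H^\gamma})$ uniform in $T$, together with the finiteness of a global-in-time Strichartz-type norm of $u$, which then upgrades to scattering by a standard argument.

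Fix a large parameter $N$ and introduce the smoothing Fourier multiplier $I=I_N$ with symbol equal to $1$ on $\{|\xi|\le N\}$ and to $(N/|\xi|)^{2-\gamma}$ on $\{|\xi|\ge 2N\}$, so that $\|Iv\|_{H^2}\lesssim N^{2-\gamma}\|v\|_{H^\gamma}$ and $I\to\mathrm{Id}$ as $N\to\infty$; define the modified energy $E(Iu(t))$. Using the scaling (\ref{scaling definition}) with a suitable $\lambda=\lambda(N)$, which turns out to be a positive power of $N$ (here both the kinetic and potential parts of $E(Iu_{0,\lambda})$ shrink because $\gamma>\Gact$ and $\nu$ is energy-subcritical), normalize the rescaled data so that $E(Iu_{0,\lambda})\le\tfrac12$, the rescaled mass being $M(u_{0,\lambda})=\lambda^{2\Gact}M(u_0)$. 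The crux is the \emph{almost conservation law}: there is $\alpha=\alpha(d,\nu,\gamma)>0$ such that on any interval $J$ on which $E(Iu_\lambda)\le 1$ and a suitable Strichartz norm of $Iu_\lambda$ is $O(1)$, one has $\sup_{t\in J}\bigl|E(Iu_\lambda(t))-E(Iu_\lambda(\inf J))\bigr|\lesssim N^{-\alpha}$. This follows by differentiating $E(Iu_\lambda)$ in time, using the equation to rewrite the derivative through the commutator $I(|u_\lambda|^{\nu-1}u_\lambda)-|Iu_\lambda|^{\nu-1}Iu_\lambda$, and then estimating this error by a Littlewood--Paley decomposition, fractional Leibniz and chain rules, and the Strichartz estimates for $e^{it\Delta^2}$; the thresholds $\gamma_1,\dots,\gamma_4$ in (\ref{define gamma d nu}) arise as the conditions guaranteeing a strictly positive power of $N^{-1}$ in each resulting error term. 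In particular $\gamma_2=4-\nu<2$, i.e. $\nu>2$, is what forces the nonlinearity to carry essentially two derivatives, and this incompatibility with $\nu<1+\tfrac{8}{d-4}$ for $d\ge12$ is what restricts the dimension to $5\le d\le 11$.

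Next, establish the interaction Morawetz inequality adapted to $\Delta^2$ for $Iu_\lambda$: a global-in-time spacetime bound (on a norm of the type $\|Iu_\lambda\|_{L^4_t L^{2d/(d-3)}_x}$) controlled by a power of $\sup_t\bigl(M(Iu_\lambda)+E(Iu_\lambda)\bigr)$, up to error terms coming from commuting $I$ with the nonlinearity and from the fourth-order dispersion, which are absorbed via the modified local theory and the almost conservation law. The new difficulties relative to the second-order case, due to $\Delta^2$, lie in the choice of the Morawetz weight and in handling these error terms. Since $E(Iu_\lambda)$ and $M(Iu_\lambda)$ remain bounded along the evolution, this yields an explicit global bound on the Morawetz norm of $Iu_\lambda$.

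Finally, close a continuity argument on $[0,\infty)$ for $u_\lambda$: with $N$ fixed large (depending only on $\|u_0\|_{H^\gamma}$), partition any interval on which $E(Iu_\lambda)\le 1$ into a number of subintervals controlled by the global Morawetz bound, on each of which the modified local theory provides the required Strichartz control and the almost conservation law provides an energy increment $\lesssim N^{-\alpha}$; the total increment is then $\lesssim(\text{Morawetz bound})\,N^{-\alpha}$, which is $<\tfrac12$ once the exponent inequality relating $\alpha$ to the scaling powers of $\lambda=\lambda(N)$ and $M(u_{0,\lambda})=\lambda^{2\Gact}M(u_0)$ holds, and this inequality, unwound, is exactly $\gamma>\gamma(d,\nu)$. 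Hence $E(Iu_\lambda(t))\le1$ for all $t\ge0$, giving a uniform bound on $\|Iu_\lambda\|_{L^\infty_t H^2}$, hence on $\|u_\lambda\|_{L^\infty_t H^\gamma}$, and undoing the scaling yields the claimed global bound for $u$. The global finiteness of the Strichartz norms obtained along the way gives scattering: $e^{-it\Delta^2}u(t)$ is Cauchy in $H^\gamma$ as $t\to\pm\infty$ by Duhamel and Strichartz, and one sets $u_0^\pm$ to be the limits. The main obstacle is the almost conservation law for the non-algebraic nonlinearity: lacking the explicit Fourier multilinear structure available when $\nu$ is an odd integer, the commutator estimate must be pushed through purely by Strichartz and paraproduct methods, and optimizing the resulting numerology is precisely what produces the exponent $\gamma(d,\nu)$.
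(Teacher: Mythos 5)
Your outline follows the paper's strategy almost step for step (scaling so that $E(Iu_\lambda(0))$ is small, almost conservation law proved by Strichartz and fractional chain rules rather than Fourier multilinear analysis, a continuity/bootstrap argument whose exponent bookkeeping produces $\gamma(d,\nu)$, and scattering from a global Strichartz bound via Duhamel). There is, however, one genuine deviation that as written leaves a gap. You propose to establish an interaction Morawetz inequality \emph{for $Iu_\lambda$}, ``up to error terms coming from commuting $I$ with the nonlinearity and from the fourth-order dispersion, which are absorbed.'' The paper never does this and never needs to: the interaction Morawetz estimate $(\ref{interaction morawetz})$ is a known a priori bound for actual solutions of (NL4S), so it is applied directly to $u_\lambda$, giving $\|u_\lambda\|_{M^\sigma}$ in terms of $\|u_\lambda\|_{L^\infty_t \dot H^{1/2}_x}$ and $\|u_\lambda\|_{L^\infty_t \dot H^\sigma_x}$ by interpolation $(\ref{interaction morawetz interpolation})$; these subcritical norms are then converted into bounds by $\|Iu_\lambda\|_{\dot H^2_x}$ through a low/high frequency splitting at $N$ together with $(\ref{property 2})$ (estimates $(\ref{low frequency estimate 1})$--$(\ref{high frequency estimate 2})$). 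Your route would require redoing the Morawetz commutator analysis for the smoothed, non-solution function $Iu_\lambda$ — a substantial piece of work (this is the ``almost Morawetz'' technology of other I-method papers) that you assert rather than carry out. Since the paper's simpler route is available, this is fixable, but as stated it is a missing argument, not a detail.

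A second, smaller gap: you claim the potential part of $E(Iu_\lambda(0))$ ``shrinks'' under the rescaling essentially for free. A direct Sobolev embedding bound on $\|Iu_\lambda(0)\|_{L^{\nu+1}_x}$ requires $\gamma\ge \frac{d(\nu-1)}{2(\nu+1)}$, which is an unwanted extra hypothesis; the paper removes it by splitting the frequency space into the three regions $|\xi|\lesssim\lambda^{-1}$, $\lambda^{-1}\lesssim|\xi|\lesssim N$, $|\xi|\gtrsim N$ and estimating each piece separately (the trick of \cite{I-teamglobal}). Your proposal should either impose that restriction or include this decomposition. The rest — including your explanation of why $d\le 11$ comes from the compatibility of $\nu>2$ with $\nu<1+\frac{8}{d-4}$, and the Cauchy-in-$H^\gamma$ argument for scattering — matches the paper.
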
  
We record in the table below some best known results, and compare them with our ones. As in the table, our results are not as good as the best known results when $\nu$ is an odd integer. But our method allows to treat the non-algebraic nonlinearity.  
\begin{table}[H]
\centering
\renewcommand{\arraystretch}{1.5}
\begin{tabular}{|c|c|c|c|c|}
\hline
$\nu$ & $d$ & $\Gact$ & $\gamma(d,\nu)$ (best known results) & $\gamma(d,\nu)$ (our results) \\
\hline
\multirow{3}{*}{3} & 5 & $\frac{1}{2}$ & $\frac{16}{11}\approx 1.4545$ (see \cite{MiaoWuZhang}) & $1.6711$ \\
\cline{2-5}
 & 6 & 1 & $\frac{16}{9}\approx 1.7777$ (see \cite{MiaoWuZhang}) & $1.8719$ \\
\cline{2-5}
 & 7 & $\frac{3}{2}$ & $\frac{45}{23} \approx 1.9565 $ (see \cite{MiaoWuZhang}) & $1.9665$ \\
\hline
\multirow{2}{*}{4} & 5 & $\frac{7}{6}$ & - & $1.9257$ \\
\cline{2-5}
& 6 & $\frac{5}{3}$ & - & $1.9922$ \\ 
\hline
\end{tabular}
\caption{Our results compare with best known results.}
\label{table:compare}
\end{table}
The proof of the above result is based on two main ingredients: the $I$-method and the interaction Morawetz inequality, which are similar to those given in \cite{VisanZhang}. The $I$-method for the fourth-order Schr\"odinger equation is a modification of the one introduced by $I$-Team in \cite{I-teamalmost}. This method is very useful for treating the nonlinear dispersive equation at low regularity, i.e. below energy space. The idea is to replace the non-conserved energy $E(u)$ when $\gamma<2$ by an ``almost conserved'' variance $E(Iu)$ with $I$ a smoothing operator which is the identity at low frequency and behaves like a fractional integral operator of order $2-\gamma$ at high frequency. Since $Iu$ is not a solution of (NL4S), we may expect an energy increment. The key is to show that the modified energy $E(Iu)$ is an ``almost conserved'' quantity in the sense that the time derivative of $E(Iu)$ decays with respect to a large parameter $N$ (see Section $\ref{section preliminaries}$ for the definition of $I$ and $N$). To do so, we need delicate estimates on the commutator between the $I$-operator and the nonlinearity. When the nonlinearity is algebraic, we can use the Fourier transform to write this commutator explicitly, and then carefully control the frequency interactions. Once the nonlinearity is no longer algebraic, this method fails. In order to treat this case, we take the advantage of Strichartz estimate with a gain of derivatives $(\ref{strichartz estimate biharmonic 4order})$. Thanks to this Strichartz estimate, we are able to apply the technique given in \cite{VisanZhang} to control the commutator. Of course, this technique is not as good as the Fourier transform technique when the nonlinearity is algebraic, but it is more robust and allows us to treat the non-algebraic nonlinearity. The interaction Morawetz inequality for the nonlinear fourth-order Schr\"odinger equation was first introduced in \cite{Pausadercubic} for $d\geq 7$. Then, it was extended for $d\geq 5$ in \cite{MiaoWuZhang}. Using this interaction Morawetz inequality and the interpolation argument together with the Sobolev embedding, we have for any compact interval $J$ and $0<\sigma \leq \gamma$,
\begin{align}
\|u\|_{M^\sigma(J)}:=\|u\|_{L^{\frac{d-5+4\sigma}{\sigma}}_t(J, L^{\frac{2(d-5+4\sigma)}{d-5+2\sigma}}_x)} \lesssim \Big(\|u_0\|_{L^2_x} \|u\|_{L^\infty_t(J,\dot{H}^{\frac{1}{2}}_x)}\Big)^{\frac{2\sigma}{d-5+4\sigma}} \|u\|_{L^\infty_t(J,\dot{H}^\sigma_x)}^{\frac{d-5}{d-5+4\sigma}}. \label{morawetz norm introduction}
\end{align}
As a byproduct of the Strichartz estimates and $I$-method, we show the ``almost conservation law'' for (NL4S), that is if $u \in L^\infty(J, \mathscr{S}(\R^d))$ is a solution to (NL4S) on a time interval $J=[0,T]$, and satisfies $\|Iu_0\|_{H^2_x}\leq 1$ and if $u$ satisfies in addition the a priori bound $\|u\|_{M^\sigma(J)} \leq \mu$ for some small constant $\mu>0$, then
\[
\sup_{t\in[0,T]}|E(Iu(t))-E(Iu_0)| \lesssim N^{-(2-\gamma+\delta)},
\] 
for some $\delta>0$. \newline
\indent We now give an outline of the proof. Let $u$ be a global in time solution to (NL4S) with initial data $u_0 \in C^\infty_0(\R^d)$. Our goal to to show the uniform bounds
\begin{align}
\|u\|_{M^\sigma(\R)} &\leq C(\|u_0\|_{H^\gamma_x}), \label{global bound morawetz norm introduction} \\
\|u\|_{L^\infty_t(\R, H^\gamma_x)} &\leq C(\|u_0\|_{H^\gamma_x}), \label{global bound sobolev norm introduction}
\end{align}
Thanks to $(\ref{global bound sobolev norm introduction})$, the global existence follows immediately by a standard density argument. Since $E(Iu_0)$ is not necessarily small, we will use the scaling $(\ref{scaling definition})$ to make $E(Iu_\lambda(0))$ small in order to apply the ``almost conservation law''. 
By choosing 
\begin{align}
\lambda \sim N^{\frac{2-\gamma}{\gamma-\Gact}}, \label{lambda introduction}
\end{align}
and using some harmonic analysis, we can make $E(Iu_\lambda(0)) \leq \frac{1}{4}$. We will show that there exists an absolute constant $C$ such that
\begin{align}
\|u_\lambda\|_{M^\sigma(\R)} \leq C\lambda^{\Gact+\frac{\sigma(4-d)\Gact}{2(d-5+4\sigma)}}. \label{global bound scaling morawetz norm introduction}
\end{align}
We then obtain $(\ref{global bound morawetz norm introduction})$ by undoing the scaling. In order to prove $(\ref{global bound scaling morawetz norm introduction})$, we perform a bootstrap argument. Note that $(\ref{global bound scaling morawetz norm introduction})$ is equivalent to
\[
\|u_\lambda\|_{M^\sigma([0,t])} \leq C\lambda^{\Gact+\frac{\sigma(4-d)\Gact}{2(d-5+4\sigma)}}, \quad \forall t\in \R.
\]
Assume by contraction, it is not so. Since $\|u_\lambda\|_{M^\sigma([0,t])}$ is a continuous function in $t$, there exists $T>0$ so that
\begin{align}
\|u_\lambda\|_{M^\sigma([0,T])} &> C \lambda^{\Gact+\frac{\sigma(4-d)\Gact}{2(d-5+4\sigma)}}, \label{assumption 1 introduction} \\
\|u_\lambda\|_{M^\sigma([0,T])}&\leq 2C \lambda^{\Gact+\frac{\sigma(4-d)\Gact}{2(d-5+4\sigma)}}. \label{assumption 2 introduction} 
\end{align}
Using $(\ref{assumption 2 introduction})$, we can split $[0,T]$ into $L$ subintervals $J_k, k=1,...,L$ so that 
\[
\|u_\lambda\|_{M^\sigma(J_k)} \leq \mu.
\] 
The number $L$ must satisfy
\begin{align}
L \sim \lambda^{\frac{\Gact(d-5+(8-d)\sigma)}{\sigma}}. \label{L introduction}
\end{align}
We thus can apply the ``almost conservation law'' to get
\[
\sup_{[0,T]} E(Iu_\lambda(t)) \leq E(Iu_\lambda(0)) + N^{-(2-\gamma+\delta)} L.
\]
Since $E(Iu_\lambda(0))\leq \frac{1}{4}$, we need 
\begin{align}
N^{-(2-\gamma+\delta)} L \ll \frac{1}{4} \label{smallness introduction}
\end{align}
in order to guarantee $E(Iu_\lambda(t))\leq 1$ for all $t\in [0,T]$. Combining $(\ref{lambda introduction}), (\ref{L introduction})$ and $(\ref{smallness introduction})$, we get a condition on $\gamma$. Next, by $(\ref{morawetz norm introduction})$ and some harmonic analysis, we have
\begin{multline*}
\|u_\lambda\|_{M^\sigma([0,T])} \leq C(\|u_0\|_{L^2_x}) \lambda^{\Gact+\frac{\sigma(4-d)\Gact}{2(d-5+4\sigma)}} \sup_{[0,T]} \Big(\|Iu_\lambda(t)\|^{\frac{1}{4}}_{\dot{H}^2_x} + \|Iu_\lambda(t)\|^{\frac{1}{2\gamma}}_{\dot{H}^2_x} \Big)^{\frac{2\sigma}{d-5+4\sigma}} \\
\times \sup_{[0,T]} \Big(\|Iu_\lambda(t)\|^{\frac{\sigma}{2}}_{\dot{H}^2_x} + \|Iu_\lambda(t)\|^{\frac{\sigma}{\gamma}}_{\dot{H}^2_x}  \Big)^{\frac{d-5}{d-5+4\sigma}}.
\end{multline*}
Since $\|Iu_\lambda(t)\|^2_{\dot{H}^2_x} \lesssim E(Iu_\lambda(t)) \leq 1$ for all $t\in [0,T]$, we get
\[
\|u_\lambda\|_{M^\sigma([0,T])} \leq K\lambda^{\Gact+\frac{\sigma(4-d)\Gact}{2(d-5+4\sigma)}},
\]
for some constant $K>0$. This contradicts with $(\ref{assumption 1 introduction})$ by taking $C$ larger than 2K. We thus obtain $(\ref{global bound morawetz norm introduction})$ and also 
\[
E(Iu_\lambda(t)) \leq 1,\quad  \forall t\in [0,\infty).
\]
This also gives the uniform bound $(\ref{global bound sobolev norm introduction})$. In order to prove the scattering property, we will upgrade the uniform Morawetz bound $(\ref{global bound morawetz norm introduction})$ to the uniform Strichartz bound, namely
\[
\|u\|_{S^\gamma(\R)}:=\sup_{(p,q)\in B} \|\scal{\nabla}^\gamma u\|_{L^p_t(\R,L^q_x)} \leq C(\|u_0\|_{H^\gamma_x}).
\]
Here $(p,q) \in B$ means that $(p,q)$ is biharmonic admissible (see again Section \ref{section preliminaries} for the definition). With this uniform Strichartz bound, the scattering property follows by a standard argument. We refer the reader to Section $\ref{section global well-posedness scattering}$ for more details. \newline
\indent This paper is organized as follows. We firstly introduce some notations and recall some results related to our problem in Section $\ref{section preliminaries}$. In Section $\ref{section almost conservation law}$, we show the almost conservation law for the modified energy. Finally, we give the proof of our main result in Section $\ref{section global well-posedness scattering}$.
\section{Preliminaries} \label{section preliminaries}
\setcounter{equation}{0}
In the sequel, the notation $A \lesssim B$ denotes an estimate of the form $A\leq CB$ for some constant $C>0$. The notation $A \sim B$ means that $A \lesssim B$ and $B \lesssim A$. We write $A \ll B$ if $A \leq cB$ for some small constant $c>0$. We also use $\scal{a}:=1+|a|$. 
\subsection{Nonlinearity} \label{subsection nonlinearity}
Let $F(z):= |z|^{\nu-1} z$  be the function which defines the nonlinearity in (NL4S). The derivative of $F(z)$ is defined by
\[
F'(z):=(\partial_z F(z), \partial_{\overline{z}}F(z)),
\]
where
\[
\partial_z F(z)=\frac{\nu+1}{2} |z|^{\nu-1}, \quad \partial_{\overline{z}} F(z) = \frac{\nu-1}{2}|z|^{\nu-1} \frac{z}{\overline{z}}.
\]
We also define its norm as
\[
|F'(z)| := |\partial_zF(z)| + |\partial_{\overline{z}} F(z)|.
\]
It is clear that $|F'(z)| = O(|z|^{\nu-1})$. For a complex-valued function $u$, we have the following chain rule
\[
\partial_k F(u) = F'(u) \partial_k u, 
\] 
for $k \in \{1,\cdots, d\}$. In particular, 
\begin{align}
\nabla F(u)= F'(u) \nabla u. \label{chain rule}
\end{align}
In order to estimate the nonlinearity, we need to recall the following fractional chain rules.
\begin{lem}[\cite{ChristWeinstein}, \cite{KenigPonceVega}]\label{lem fractional chain rule C1}
Suppose that $G\in C^1(\C, \C)$, and $\alpha \in (0,1)$. Then for $1 <q \leq q_2 <\infty$ and $1<q_1 \leq \infty$ satisfying $\frac{1}{q}=\frac{1}{q_1}+\frac{1}{q_2}$, 
\[
\||\nabla|^\alpha G(u) \|_{L^q_x} \lesssim \|G'(u) \|_{L^{q_1}_x} \||\nabla|^\alpha u \|_{L^{q_2}_x}.
\]
\end{lem}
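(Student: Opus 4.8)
The plan is to reduce the estimate, via a classical pointwise characterization of the homogeneous fractional Sobolev norm, to a bound on a difference square function, to which the mean value theorem applies directly; the factor $\|G'(u)\|_{L^{q_1}_x}$ would then be extracted by a dyadic-in-scale decomposition together with the Hardy--Littlewood maximal inequality. This is the scheme of \cite{ChristWeinstein} and \cite{KenigPonceVega}. Throughout I would first assume $u\in\mathscr{S}(\R^d)$, so that every quantity below is finite, and recover the general case at the end by density, using $G\in C^1$.

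\emph{Step 1: reduction to a difference square function.} For $0<\alpha<1$ and $1<r<\infty$ one has the classical norm equivalence (Stein, Strichartz)
\[
\||\nabla|^\alpha f\|_{L^r_x}\ \sim\ \left\|\Big(\int_{\R^d}\frac{|f(\cdot+y)-f(\cdot)|^2}{|y|^{d+2\alpha}}\,dy\Big)^{1/2}\right\|_{L^r_x}\ =:\ \|\mathcal{D}^\alpha f\|_{L^r_x},
\]
i.e. the identification $\dot W^{\alpha,r}(\R^d)=\dot F^\alpha_{r,2}(\R^d)$. Applying this with $r=q$ to $G(u)$ and with $r=q_2$ to $u$, the lemma reduces to
\[
\|\mathcal{D}^\alpha(G(u))\|_{L^q_x}\ \lesssim\ \|G'(u)\|_{L^{q_1}_x}\,\|\mathcal{D}^\alpha u\|_{L^{q_2}_x}.
\]

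\emph{Step 2: pointwise bound.} Viewing $G\colon\R^2\to\R^2$ as $C^1$ and integrating its differential along the segment from $u(x)$ to $u(x+y)$,
\[
|G(u(x+y))-G(u(x))|\ \le\ |u(x+y)-u(x)|\int_0^1|G'(c_\theta(x,y))|\,d\theta,\qquad c_\theta(x,y):=(1-\theta)u(x)+\theta\,u(x+y).
\]
Since $|c_\theta(x,y)|\le\max\{|u(x)|,|u(x+y)|\}$, writing $\Phi(r):=\sup_{|w|\le r}|G'(w)|$ (nondecreasing) gives $\int_0^1|G'(c_\theta(x,y))|\,d\theta\le\Phi(|u(x)|)+\Phi(|u(x+y)|)$, and for the nonlinearity $F(z)=|z|^{\nu-1}z$ of interest $\Phi(|z|)\sim|z|^{\nu-1}\sim|F'(z)|$ (the only place a structural property of $G$ enters; the general $C^1$ case follows along the same lines, see \cite{ChristWeinstein}). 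Hence $\mathcal{D}^\alpha(G(u))(x)\lesssim\Phi(|u(x)|)\,\mathcal{D}^\alpha u(x)+T(x)$, where
\[
T(x):=\Big(\int_{\R^d}\frac{\Phi(|u(x+y)|)^2\,|u(x+y)-u(x)|^2}{|y|^{d+2\alpha}}\,dy\Big)^{1/2}.
\]

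\emph{Step 3 and main obstacle.} The ``diagonal'' term is immediate: H\"older's inequality with $\tfrac1q=\tfrac1{q_1}+\tfrac1{q_2}$ yields $\big\|\Phi(|u|)\,\mathcal{D}^\alpha u\big\|_{L^q_x}\le\|\Phi(|u|)\|_{L^{q_1}_x}\|\mathcal{D}^\alpha u\|_{L^{q_2}_x}\sim\|G'(u)\|_{L^{q_1}_x}\||\nabla|^\alpha u\|_{L^{q_2}_x}$. The genuinely delicate term is $T$, in which the weight $\Phi(|u(\cdot+y)|)$ sits at the translated point and cannot simply be pulled out of the integral. The plan here is to split the $y$-integral into dyadic shells $|y|\sim 2^k$; on shell $k$ one replaces $u(x)$ by the average of $u$ over $B(x,2^k)$, estimates the oscillation of $u$ on that ball by $2^{k\alpha}$ times a Hardy--Littlewood maximal function of $\mathcal{D}^\alpha u$ (a fractional Poincar\'e inequality), bounds the translated weight by $\mathcal{M}(\Phi(|u|)^{q_1})(x)^{1/q_1}$, and — exploiting the honest smallness of $u(x+y)-u(x)$ at small scales — sums the shells; applying the maximal inequality in $L^{q_1}_x$ and $L^{q_2}_x$ (both admissible since $q_1,q_2>1$) together with H\"older then gives $\|T\|_{L^q_x}\lesssim\|G'(u)\|_{L^{q_1}_x}\||\nabla|^\alpha u\|_{L^{q_2}_x}$. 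Combining the two terms with Step 1 completes the proof; the endpoint $q_1=\infty$ (where $q=q_2$) follows at once from Step 2. The main obstacle is precisely the estimate for $T$: making the dyadic sum converge at both small and large scales while cleanly separating the factor $G'(u)$ is the heart of the Christ--Weinstein argument, and everything else is soft.
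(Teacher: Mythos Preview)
The paper does not supply its own proof of this lemma: immediately after stating it (together with the companion H\"older-continuous chain rule), the text simply refers the reader to \cite[Proposition 3.1]{ChristWeinstein} for the range $1<q_1<\infty$ and to \cite[Theorem A.6]{KenigPonceVega} for $q_1=\infty$. So there is nothing in the paper to compare your argument against; your sketch is in fact an outline of those very references.

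That said, one point in your write-up deserves flagging. In Step~2 you pass from $\int_0^1|G'(c_\theta(x,y))|\,d\theta$ to $\Phi(|u(x)|)+\Phi(|u(x+y)|)$ with $\Phi(r)=\sup_{|w|\le r}|G'(w)|$. This yields the estimate with $\|\Phi(|u|)\|_{L^{q_1}_x}$ on the right, which for a general $G\in C^1$ can be strictly larger than $\|G'(u)\|_{L^{q_1}_x}$; so as written you have not proved the lemma in its stated generality. You are explicit that for the power nonlinearity $F(z)=|z|^{\nu-1}z$ one has $\Phi(|z|)\sim|F'(z)|$, and hence your argument does cover every application of the lemma actually made in the paper; but the general $C^1$ case is exactly what you defer back to \cite{ChristWeinstein}, so at that level your proposal is circular. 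The genuine Christ--Weinstein argument avoids the radial majorant $\Phi$ and produces the maximal function of $|G'(u)|$ directly --- that refinement is the real content of the $1<q_1<\infty$ case and is what your Step~3 outline is gesturing toward without carrying out. The endpoint $q_1=\infty$ is, as you say, immediate from the pointwise bound in Step~2.
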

\begin{lem}[\cite{Visanthesis}]\label{lem fractional chain rule holder continuous}
Suppose that $G \in C^{0,\beta}(\C, \C), \beta \in (0,1)$. Then for every $0<\alpha<\beta, 1<q<\infty$, and $\frac{\alpha}{\beta}<\rho<1$,
\[
\||\nabla|^\alpha G(u)\|_{L^q_x} \lesssim \||u|^{\beta-\frac{\alpha}{\rho}}\|_{L^{q_1}_x} \||\nabla|^\rho u\|^{\frac{\alpha}{\rho}}_{L^{\frac{\alpha}{\rho}q_2}_x},
\]
provided $\frac{1}{q}=\frac{1}{q_1}+\frac{1}{q_2}$ and $\left(1-\frac{\alpha}{\beta \rho}\right)q_1>1$.
\end{lem}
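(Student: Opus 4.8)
The plan is to reduce the left-hand side to a Littlewood--Paley square function, to estimate each dyadic block using nothing but the H\"older continuity of $G$, and then to balance the two factors on the right-hand side by splitting the H\"older exponent. For $1<q<\infty$ one has the characterization
\[
\||\nabla|^\alpha G(u)\|_{L^q_x} \sim \Big\| \Big( \sum_{N} N^{2\alpha}\, |P_N G(u)|^2 \Big)^{1/2} \Big\|_{L^q_x},
\]
with $P_N$ the dyadic Littlewood--Paley projection. Since the kernel of $P_N$ has mean zero and is an approximate identity at scale $N^{-1}$, one has the pointwise bound $|P_N G(u)(x)| \lesssim \int \Phi_N(x-y)\,|G(u(y))-G(u(x))|\,dy$ with $\Phi_N(z)\simeq N^d(1+N|z|)^{-L}$, $L$ large, and then $|G(u(y))-G(u(x))| \lesssim |u(y)-u(x)|^\beta$ because $G\in C^{0,\beta}$. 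As $\Phi_N$ effectively localizes $|x-y|$ at scale $N^{-1}$, everything reduces to estimating $\int \Phi_N(x-y)\,|u(y)-u(x)|^\beta\,dy$ and its square sum over dyadic $N$.

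The crucial step is the splitting $\beta = \big(\beta - \tfrac{\alpha}{\rho}\big) + \tfrac{\alpha}{\rho}$, legitimate precisely because the hypothesis $\rho > \alpha/\beta$ forces $\beta - \tfrac{\alpha}{\rho}>0$; accordingly one bounds $|u(y)-u(x)|^\beta$ by $\big(|u(y)|^{\beta-\alpha/\rho}+|u(x)|^{\beta-\alpha/\rho}\big)\,|u(y)-u(x)|^{\alpha/\rho}$. The first parenthesis, after H\"older's inequality inside the $y$-integral with exponent $\tfrac{\beta}{\beta-\alpha/\rho}>1$, is dominated by $\big(M(|u|^\beta)(x)\big)^{(\beta-\alpha/\rho)/\beta}$, where $M$ is the Hardy--Littlewood maximal operator. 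For the factor $|u(y)-u(x)|^{\alpha/\rho}$ one splits $u$ into frequencies $\gtrsim N$ and $\lesssim N$ and applies Bernstein's inequality: the high-frequency difference is of size $N^{-\rho}$ times a square-function or maximal expression in $|\nabla|^\rho u$ (the sum $\sum_{M>N}M^{-\rho}$ converging because $\rho>0$), while the low-frequency difference is of size $|x-y|\,N^{1-\rho}$ times a maximal expression in $|\nabla|^\rho u$ along the segment joining $x$ to $y$ (fundamental theorem of calculus plus Bernstein; the sum $\sum_{M\le N}M^{1-\rho}$ converging because $\rho<1$, and the supremum over that segment, of length $\lesssim N^{-1}$ on the bulk, being absorbed into an ordinary Hardy--Littlewood maximal function). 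Raising these to the power $\alpha/\rho$ and absorbing $|x-y|^\alpha\simeq N^{-\alpha}$ into the weight leaves a maximal function of $(|\nabla|^\rho u)^{\alpha/\rho}$, and the leftover powers of $N$ give a square-summable sequence in $N$; making this last point precise requires interpolating between this crude estimate and the one obtained by using the full $\beta$-power of the H\"older gain, and the strict inequalities $\alpha/\beta<\rho<1$ are exactly what drive the interpolation.

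Assembling these bounds, and invoking the Fefferman--Stein vector-valued maximal inequality (valid since $q>1$) to extract the block-wise maximal functions from the square sum, one arrives at a pointwise estimate of the schematic form
\[
\Big( \sum_{N} N^{2\alpha}\, |P_N G(u)(x)|^2 \Big)^{1/2} \lesssim \big(M(|u|^\beta)(x)\big)^{\frac{\beta-\alpha/\rho}{\beta}}\; M\big((|\nabla|^\rho u)^{\alpha/\rho}\big)(x).
\]
Taking $L^q_x$ norms and applying H\"older's inequality in $x$ with $\tfrac1q=\tfrac1{q_1}+\tfrac1{q_2}$ finishes the proof. The first factor is estimated in $L^{q_1}_x$ by $\||u|^{\beta-\alpha/\rho}\|_{L^{q_1}_x}$, and this is precisely where the last hypothesis enters: rewriting $(1-\tfrac{\alpha}{\beta\rho})q_1>1$ as $q_1\cdot\tfrac{\beta-\alpha/\rho}{\beta}>1$, it says exactly that $M$ is bounded on $L^{q_1(\beta-\alpha/\rho)/\beta}_x$, whence $\big\|\big(M(|u|^\beta)\big)^{(\beta-\alpha/\rho)/\beta}\big\|_{L^{q_1}_x}\lesssim \||u|^{\beta-\alpha/\rho}\|_{L^{q_1}_x}$; the second factor is estimated in $L^{q_2}_x$ by $\|(|\nabla|^\rho u)^{\alpha/\rho}\|_{L^{q_2}_x}=\||\nabla|^\rho u\|_{L^{(\alpha/\rho)q_2}_x}^{\alpha/\rho}$, using only $q_2>1$, which follows from $q>1$. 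I expect the main obstacle to be the frequency bookkeeping of the second paragraph: organizing the decomposition of the $u$-increment and the accompanying interpolation so that the dyadic coefficients are genuinely square-summable in $N$ --- not merely uniformly bounded --- while still producing exactly the exponents $\beta-\alpha/\rho$ on $|u|$ and $\alpha/\rho$ on $|\nabla|^\rho u$. This is carried out in \cite{Visanthesis}, to which we refer for the details; an essentially equivalent route replaces the dyadic square function by Strichartz's characterization of $\||\nabla|^\alpha f\|_{L^q_x}$ through the quadratic functional formed from the local oscillations of $f$ at all scales $t>0$, the scale integral $dt/t$ then playing the role of the sum over dyadic $N$.
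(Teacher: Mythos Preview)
The paper does not actually prove this lemma: immediately after stating it, the authors write ``For the proof of Lemma $\ref{lem fractional chain rule holder continuous}$, we refer to \cite[Proposition A.1]{Visanthesis}'' and move on. Your sketch is therefore strictly more than what the paper itself provides, and it follows the same route as Visan's original argument (Littlewood--Paley square function, mean-zero kernel plus H\"older continuity of $G$, the splitting $\beta=(\beta-\alpha/\rho)+\alpha/\rho$, frequency decomposition of the increment, and maximal-function bounds with the condition $(1-\tfrac{\alpha}{\beta\rho})q_1>1$ read exactly as the integrability threshold for $M$). Since you also ultimately defer the square-summability bookkeeping to \cite{Visanthesis}, your proposal and the paper's treatment are effectively identical.
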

The reader can find the proof of Lemma $\ref{lem fractional chain rule C1}$ in the case $1<q_1<\infty$ in \cite[Proposition 3.1]{ChristWeinstein} and \cite[Theorem A.6]{KenigPonceVega} when $q_1=\infty$. For the proof of Lemma $\ref{lem fractional chain rule holder continuous}$, we refer to \cite[Proposition A.1]{Visanthesis}. 
\subsection{Strichartz estimates} \label{subsection strichartz estimate}
Let $I \subset \R$ and $p, q \in [1,\infty]$. The Strichartz norm is defined as
\[
\|u\|_{L^p_t(I, L^q_x)} := \Big( \int_I \Big( \int_{\R^d} |u(t,x)|^q dx \Big)^{\frac{1}{q}} \Big)^{\frac{1}{p}}
\] 
with a usual modification when either $p$ or $q$ are infinity. When there is no risk of confusion, we write $L^p_t L^q_x$ instead of $L^p_t(I,L^q_x)$. When $p=q$, we also use $L^p_{t,x}$.
\begin{defi} \label{definition schrodinger admissible}
A pair $(p,q)$ is said to be \textbf{Schr\"odinger admissible}, for short $(p,q) \in S$, if 
\[
(p,q) \in [2,\infty]^2, \quad (p,q,d) \ne (2,\infty,2), \quad \frac{2}{p}+\frac{d}{q} \leq \frac{d}{2}.
\]
\end{defi}
We denote for $(p,q)\in [1,\infty]^2$,
\begin{align}
\gamma_{p,q}=\frac{d}{2}-\frac{d}{q}-\frac{4}{p}. \label{define gamma pq}
\end{align}
\begin{defi}\label{definition biharmonic admissible}
A pair $(p,q)$ is called \textbf{biharmonic admissible}, for short $(p,q)\in B$, if 
\[
(p,q) \in S, \quad \gamma_{p,q}=0.
\]
\end{defi}
\begin{prop}[Strichartz estimates for the fourth-order Schr\"odinger equation \cite{Dinhfract}] \label{prop strichartz}
Let $\gamma \in \R$ and $u$ be a (weak) solution to the linear fourth-order Schr\"odinger equation, namely
\[
u(t)= e^{it\Delta^2}u_0 + \int_0^t e^{i(t-s)\Delta^2} F(s) ds,
\]
for some data $u_0, F$. Then for all $(p,q)$ and $(a,b)$ Schr\"odinger admissible with $q<\infty$ and $b<\infty$,
\begin{align}
\||\nabla|^\gamma u\|_{L^p_t(\R, L^q_x)} \lesssim \||\nabla|^{\gamma+\gamma_{p,q}} u_0\|_{L^2_x} + \||\nabla|^{\gamma+\gamma_{p,q}-\gamma_{a',b'} -4} F\|_{L^{a'}_t(\R, L^{b'}_x)}. \label{generalized strichartz estimate}
\end{align}
Here $(a,a')$ and $(b,b')$ are conjugate pairs, and $\gamma_{p,q}, \gamma_{a',b'}$ are defined as in $(\ref{define gamma pq})$.
\end{prop}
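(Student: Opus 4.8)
The plan is to prove \eqref{generalized strichartz estimate} in the classical three stages: a frequency-localized dispersive estimate, the abstract $TT^{*}$ machinery, and Littlewood--Paley summation together with the passage from the non-retarded to the retarded Duhamel operator. Since $|\nabla|^{\gamma}$ commutes both with $e^{it\Delta^{2}}$ and with the Duhamel integral, I would first reduce to $\gamma=0$ by replacing $(u_{0},F)$ with $(|\nabla|^{\gamma}u_{0},|\nabla|^{\gamma}F)$.

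The heart of the argument is the dispersive estimate for the Littlewood--Paley pieces $P_{N}$ (projection to frequencies $|\xi|\sim N$). With $\psi$ a bump on an annulus, the substitution $\xi=N\eta$ writes the kernel of $e^{it\Delta^{2}}P_{N}$ as $N^{d}\int e^{i(Nx\cdot\eta+tN^{4}|\eta|^{4})}\psi(\eta)\,d\eta$; since the Hessian of $|\eta|^{4}$ is nondegenerate on the support of $\psi$, stationary and non-stationary phase give $\|e^{it\Delta^{2}}P_{N}f\|_{L^{\infty}_{x}}\lesssim|t|^{-d/2}N^{-d}\|P_{N}f\|_{L^{1}_{x}}$, which is the frequency-localized form of the sharp dispersive bound of \cite{Ben-ArtziKochSaut}. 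Together with $\|e^{it\Delta^{2}}P_{N}f\|_{L^{2}_{x}}\le\|P_{N}f\|_{L^{2}_{x}}$, interpolation and the Hardy--Littlewood--Sobolev/$TT^{*}$ argument --- in the endpoint form of Keel and Tao, which also covers the borderline case $p=2$ where the resulting kernel in time fails to be integrable --- yield, for every Schr\"odinger-admissible $(p,q)$ on the sharp line $\tfrac2p+\tfrac dq=\tfrac d2$, the bound $\|e^{it\Delta^{2}}P_{N}f\|_{L^{p}_{t}L^{q}_{x}}\lesssim N^{-2/p}\|P_{N}f\|_{L^{2}_{x}}$ (the power of $N$ being tracked through the constant in the Keel--Tao argument). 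For a general Schr\"odinger-admissible $(p,q)$, I would fix the sharp-line space exponent $\tilde q$ corresponding to the same $p$ (so $2\le\tilde q\le q<\infty$), apply Bernstein's inequality $\|P_{N}g\|_{L^{q}_{x}}\lesssim N^{d(1/\tilde q-1/q)}\|P_{N}g\|_{L^{\tilde q}_{x}}$, and observe that the exponents combine to exactly $\gamma_{p,q}$, giving $\|e^{it\Delta^{2}}P_{N}f\|_{L^{p}_{t}L^{q}_{x}}\lesssim N^{\gamma_{p,q}}\|P_{N}f\|_{L^{2}_{x}}$.

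Next I would sum in $N$. Because $q<\infty$, the Littlewood--Paley square function estimate and Minkowski's inequality (valid since $p,q\ge2$) give
\[
\|e^{it\Delta^{2}}u_{0}\|_{L^{p}_{t}L^{q}_{x}}\lesssim\Big(\sum_{N}\|e^{it\Delta^{2}}P_{N}u_{0}\|_{L^{p}_{t}L^{q}_{x}}^{2}\Big)^{1/2}\lesssim\Big(\sum_{N}N^{2\gamma_{p,q}}\|P_{N}u_{0}\|_{L^{2}_{x}}^{2}\Big)^{1/2}\sim\||\nabla|^{\gamma_{p,q}}u_{0}\|_{L^{2}_{x}},
\]
which, after reinstating $|\nabla|^{\gamma}$, is the homogeneous part of \eqref{generalized strichartz estimate}. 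For the Duhamel term I would first treat the non-retarded operator $\int_{\R}e^{i(t-s)\Delta^{2}}F(s)\,ds=e^{it\Delta^{2}}\int_{\R}e^{-is\Delta^{2}}F(s)\,ds$: applying the homogeneous estimate in $t$ and then dualizing the homogeneous estimate for the pair $(a,b)$ produces exactly the stated right-hand side, the shift $-4$ in the exponent $\gamma+\gamma_{p,q}-\gamma_{a',b'}-4$ being forced by the identity $\gamma_{a,b}+\gamma_{a',b'}=-4$ (equivalently $\tfrac1b+\tfrac1{b'}=1$ and $\tfrac1a+\tfrac1{a'}=1$). Finally the Christ--Kiselev lemma converts the non-retarded operator into the retarded one $\int_{0}^{t}$; this is legitimate whenever $p>a'$, i.e.\ away from the double endpoint $p=a'=2$, and that remaining case is handled directly by the retarded estimate inside the Keel--Tao scheme.

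The main obstacle is the first step: obtaining the dispersive estimate with the \emph{sharp} frequency weight $|t|^{-d/2}N^{-d}$ rather than a lossy one. It is precisely this gain --- a consequence of the rescaling $\xi=N\eta$ together with the non-degeneracy of the Hessian of $|\xi|^{4}$ away from the origin --- that survives the Bernstein step as the exponent $\gamma_{p,q}$, and hence is responsible for the ``gain of derivatives'' built into \eqref{generalized strichartz estimate}; once the derivative counts are tracked carefully, the remaining steps are routine.
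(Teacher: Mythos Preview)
Your argument is correct and is precisely the ``scaling technique'' the paper alludes to: the paper does not prove the proposition here but refers to \cite{Dinhfract}, remarking only that the proof proceeds by scaling rather than by invoking the delicate global dispersive estimate of \cite{Ben-ArtziKochSaut} for the full fundamental solution. Your frequency localization and rescaling $\xi=N\eta$ to the unit annulus (where stationary phase is elementary because the Hessian of $|\eta|^4$ is nondegenerate there), followed by Keel--Tao on the sharp Schr\"odinger line, Bernstein to move off the sharp line, Littlewood--Paley summation using $q<\infty$, and the duality plus Christ--Kiselev step for the Duhamel term, is exactly that scaling approach, and your bookkeeping of the exponent $\gamma_{p,q}$ and of the identity $\gamma_{a,b}+\gamma_{a',b'}=-4$ is correct.
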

The estimate $(\ref{generalized strichartz estimate})$ is exactly the one given in \cite{MiaoZhang}, \cite{Pausader} or \cite{Pausadercubic} where the author considered $(p,q)$ and $(a,b)$ are either sharp Schr\"odinger admissible, i.e.
\[
p, q \in [2,\infty]^2, \quad (p,q,d) \ne (2,\infty,2), \quad \frac{2}{p}+\frac{d}{q}=\frac{d}{2},
\]
or biharmonic admissible.
We refer the reader to \cite[Proposition 2.1]{Dinhfract} for the proof of Proposition $\ref{prop strichartz}$. The proof is based on the scaling technique instead of using a dedicate dispersive estimate of \cite{Ben-ArtziKochSaut} for the fundamental solution of the homogeneous fourth-order Schr\"odinger equation. \newline
\indent The following result is a direct consequence of $(\ref{generalized strichartz estimate})$. 
\begin{coro} \label{coro strichartz}
Let $\gamma \in \R$ and $u$ be a (weak) solution to the linear fourth-order Schr\"odinger equation for some data $u_0, F$. Then for all $(p,q)$ and $(a,b)$ biharmonic admissible satisfying $q<\infty$ and $b<\infty$,
\begin{align}
\|u\|_{L^p_t(\R,L^q_x)} \lesssim \|u_0\|_{L^2_x} + \|F\|_{L^{a'}_t(\R,L^{b'}_x)}, \label{strichartz estimate biharmonic}
\end{align}
and
\begin{align}
\||\nabla|^\gamma u\|_{L^p_t(\R,L^q_x)} \lesssim \||\nabla|^\gamma u_0\|_{L^2_x} + \||\nabla|^{\gamma-1} F\|_{L^2_t(\R,L^{\frac{2d}{d+2}}_x)}. \label{strichartz estimate biharmonic 4order}
\end{align}
\end{coro}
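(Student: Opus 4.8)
The plan is to derive both inequalities directly from the generalized Strichartz estimate $(\ref{generalized strichartz estimate})$ of Proposition $\ref{prop strichartz}$ by choosing appropriate admissible pairs and bookkeeping the scaling exponents $\gamma_{p,q}$ and $\gamma_{a',b'}$ defined in $(\ref{define gamma pq})$. The only genuine work is arithmetic with conjugate exponents; once Proposition $\ref{prop strichartz}$ is in hand there is no analytic obstacle.

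For $(\ref{strichartz estimate biharmonic})$, I would apply $(\ref{generalized strichartz estimate})$ with $\gamma = 0$ and with both $(p,q)$ and $(a,b)$ biharmonic admissible. By Definition $\ref{definition biharmonic admissible}$ this forces $\gamma_{p,q} = 0$ and $\gamma_{a,b} = 0$, so the homogeneous term is simply $\|u_0\|_{L^2_x}$. For the inhomogeneous term, the exponent appearing in $(\ref{generalized strichartz estimate})$ is $\gamma + \gamma_{p,q} - \gamma_{a',b'} - 4 = -\gamma_{a',b'} - 4$, so it suffices to show $\gamma_{a',b'} = -4$. Using $\frac{1}{a}+\frac{1}{a'}=1$, $\frac{1}{b}+\frac{1}{b'}=1$ together with $\gamma_{a,b} = \frac{d}{2} - \frac{d}{b} - \frac{4}{a} = 0$, a direct computation gives
\[
\gamma_{a',b'} = \frac{d}{2} - \frac{d}{b'} - \frac{4}{a'} = -\frac{d}{2} - 4 + \left(\frac{d}{b} + \frac{4}{a}\right) = -\frac{d}{2} - 4 + \frac{d}{2} = -4,
\]
which yields $(\ref{strichartz estimate biharmonic})$.

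For $(\ref{strichartz estimate biharmonic 4order})$, I would keep $\gamma$ arbitrary, take $(p,q)$ biharmonic admissible (hence $\gamma_{p,q} = 0$), but now choose $(a,b) = \bigl(2, \tfrac{2d}{d-2}\bigr)$. This pair is sharp Schr\"odinger admissible --- indeed $\frac{2}{2} + \frac{d}{2d/(d-2)} = 1 + \frac{d-2}{2} = \frac{d}{2}$, and since $d \geq 5$ we have $q = \tfrac{2d}{d-2} < \infty$ and we avoid the excluded case $(2,\infty,2)$ --- so $(\ref{generalized strichartz estimate})$ applies (cf. the remark following Proposition $\ref{prop strichartz}$). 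Note that the pair $(a,b)$ does not enter the conclusion $(\ref{strichartz estimate biharmonic 4order})$, so there is no loss in making this choice. Its dual exponents are $a' = 2$ and $b' = \tfrac{2d}{d+2}$, giving $\gamma_{a',b'} = \frac{d}{2} - \frac{d+2}{2} - 2 = -3$. Hence the homogeneous term is $\||\nabla|^\gamma u_0\|_{L^2_x}$ and the inhomogeneous exponent is $\gamma + \gamma_{p,q} - \gamma_{a',b'} - 4 = \gamma + 0 + 3 - 4 = \gamma - 1$, so the inhomogeneous term is $\||\nabla|^{\gamma-1} F\|_{L^2_t(\R, L^{2d/(d+2)}_x)}$, which is exactly $(\ref{strichartz estimate biharmonic 4order})$.

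The only point requiring care is checking that the pairs used --- in particular $\bigl(2, \tfrac{2d}{d-2}\bigr)$ --- satisfy the hypotheses of Proposition $\ref{prop strichartz}$ (finiteness of the second exponent and Schr\"odinger or biharmonic admissibility), and that the conjugate-exponent identities are applied correctly in computing $\gamma_{a',b'}$; both are routine.
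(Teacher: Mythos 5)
Your proposal is correct and is exactly the computation the paper has in mind: the paper gives no proof, calling the corollary a ``direct consequence'' of $(\ref{generalized strichartz estimate})$, and your choices of pairs (both biharmonic for $(\ref{strichartz estimate biharmonic})$; a biharmonic pair together with the sharp Schr\"odinger-admissible pair $(2,\tfrac{2d}{d-2})$ for $(\ref{strichartz estimate biharmonic 4order})$) together with the exponent arithmetic $\gamma_{a',b'}=-4$ and $\gamma_{a',b'}=-3$ are the intended specializations.
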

\subsection{Littlewood-Paley decomposition} \label{subsection littlewood-paley decomposition}
Let $\varphi$ be a radial smooth bump function supported in the ball $|\xi|\leq 2$ and  equal to 1 on the ball $|\xi|\leq 1$.  For $M=2^k, k \in \Z$, we define the Littlewood-Paley operators 
\begin{align*}
\widehat{P_{\leq M} f}(\xi) &:= \varphi(M^{-1}\xi) \hat{f}(\xi), \\
\widehat{P_{>M} f}(\xi) &:= (1-\varphi(M^{-1}\xi)) \hat{f}(\xi), \\
\widehat{P_M f}(\xi) &:= (\varphi(M^{-1} \xi) - \varphi(2M^{-1}\xi)) \hat{f}(\xi),
\end{align*}
where $\hat{\cdot}$ is the spatial Fourier transform. Similarly, we can define for $M, M_1 \leq  M_2 \in 2^\Z$,
\[
P_{<M} := P_{\leq M}-P_M, \quad P_{\geq M} := P_{>M}+ P_M, \quad P_{M_1 < \cdot \leq M_2}:= P_{\leq M_2} - P_{\leq M_1} = \sum_{M_1 < M \leq M_2} P_M.
\] 
We recall the following standard Bernstein inequalities (see e.g. \cite[Chapter 2]{BCDfourier} or \cite[Appendix]{Tao}).
\begin{lem}[Bernstein inequalities] \label{lem bernstein inequalities}
Let $\gamma\geq 0$ and $1 \leq p \leq q \leq \infty$. We have
\begin{align*}
\|P_{\geq M} f\|_{L^p_x} &\lesssim M^{-\gamma} \||\nabla|^\gamma P_{\geq M} f\|_{L^p_x}, \\
\|P_{\leq M} |\nabla|^\gamma f\|_{L^p_x} &\lesssim M^\gamma \|P_{\leq M} f\|_{L^p_x}, \\
\|P_M |\nabla|^{\pm \gamma} f\|_{L^p_x} & \sim M^{\pm \gamma} \|P_M f\|_{L^p_x}, \\
\|P_{\leq M} f\|_{L^q_x} &\lesssim M^{\frac{d}{p}-\frac{d}{q}} \|P_{\leq M} f\|_{L^p_x}, \\
\|P_M f\|_{L^q_x} &\lesssim M^{\frac{d}{p}-\frac{d}{q}} \|P_M f\|_{L^p_x}.
\end{align*}
\end{lem}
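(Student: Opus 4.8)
The plan is to dilate each of the five inequalities to the scale $M=1$ and then recognise the operator in question as a Fourier multiplier whose convolution kernel lies in $L^1(\R^d)$ — or, for the two estimates that change the Lebesgue exponent, in the $L^r$ dictated by Young's convolution inequality. Restoring $M$ then reproduces the stated powers of $M$: an operator with symbol of the form $M^{\sigma}\,n(M^{-1}\cdot)$ has kernel $M^{\sigma+d}\check n(M\,\cdot)$, whose $L^r$ norm equals $M^{\sigma+d-d/r}\|\check n\|_{L^r}$. Every inequality is trivial for $\gamma=0$, so I assume $\gamma>0$ throughout, and I repeatedly use a ``fattened'' projection: a multiplier $\widetilde\varphi(M^{-1}\cdot)$ equal to $1$ on the Fourier support of the given Littlewood--Paley projection, so that $\widetilde P\,P=P$ and $\widetilde P$ may be inserted at will.

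The two exponent-changing estimates come first. Writing $P_{\le M}f=\widetilde P_{\le M}(P_{\le M}f)$ with $\widetilde\varphi$ a bump equal to $1$ on $\{|\xi|\le 2\}$, the kernel of $\widetilde P_{\le M}$ is $M^{d}\check{\widetilde\varphi}(M\,\cdot)$ with $\check{\widetilde\varphi}$ Schwartz; Young's inequality with $1+\tfrac1q=\tfrac1r+\tfrac1p$ — where $r\ge1$ precisely because $p\le q$ — gives $\|P_{\le M}f\|_{L^q_x}\lesssim M^{d/p-d/q}\|P_{\le M}f\|_{L^p_x}$, and the identical argument with an annular bump gives the $P_M$ version. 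For the two estimates involving $|\nabla|^{\pm\gamma}$ on $P_M$, I would use that $|\nabla|^{\pm\gamma}$ commutes with $P_M$ and factor $|\nabla|^{\pm\gamma}P_M=(|\nabla|^{\pm\gamma}\widetilde P_M)\,P_M$; the symbol $|\xi|^{\pm\gamma}\widetilde\varphi(M^{-1}\xi)=M^{\pm\gamma}\,h_\pm(M^{-1}\xi)$ with $h_\pm(\eta):=|\eta|^{\pm\gamma}\widetilde\varphi(\eta)$ a $C^\infty_c$ function supported away from the origin (as $\widetilde\varphi$ is an annular bump), hence Schwartz, so Young's inequality gives $\|P_M|\nabla|^{\pm\gamma}f\|_{L^p_x}\lesssim M^{\pm\gamma}\|P_M f\|_{L^p_x}$; applying this once with $+\gamma$ and once with $-\gamma$ (to $|\nabla|^{\pm\gamma}P_M f$ in place of $P_M f$) yields the two-sided equivalence. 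The bound $\|P_{\le M}|\nabla|^{\gamma}f\|_{L^p_x}\lesssim M^{\gamma}\|P_{\le M}f\|_{L^p_x}$ is the same computation, but now $\widetilde\varphi$ is a bump including the origin, so $h(\eta):=|\eta|^{\gamma}\widetilde\varphi(\eta)$ is only Sobolev-regular at $0$; since $h$ is compactly supported and lies in $H^{s}$ for every $s<\gamma+\tfrac d2$, I would take $\tfrac d2<s<\gamma+\tfrac d2$ (possible exactly because $\gamma>0$) and deduce $\check h\in L^1$ from $\|\check h\|_{L^1_x}\le\|\langle\cdot\rangle^{-s}\|_{L^2_x}\|\langle\cdot\rangle^{s}\check h\|_{L^2_x}<\infty$.

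The remaining inequality, $\|P_{\ge M}f\|_{L^p_x}\lesssim M^{-\gamma}\||\nabla|^{\gamma}P_{\ge M}f\|_{L^p_x}$, is where I expect the only real work. Put $g:=|\nabla|^{\gamma}P_{\ge M}f$, whose Fourier transform is supported in $\{|\xi|\gtrsim M\}$; then $P_{\ge M}f=T_M g$, where $T_M$ has symbol $|\xi|^{-\gamma}\chi(M^{-1}\xi)$ with $\chi$ smooth, supported away from the origin, and equal to $1$ on a neighbourhood of the (scale-one) Fourier support of $P_{\ge M}$. The kernel of $T_M$ is then $M^{-\gamma}K_M$ with $K$ the scale-one kernel, so it suffices to check $K\in L^1(\R^d)$. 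Its symbol $\widehat K=|\xi|^{-\gamma}\chi(\xi)$ is smooth with $|\partial^\alpha\widehat K(\xi)|\lesssim_\alpha\langle\xi\rangle^{-\gamma-|\alpha|}$, so integrating by parts $k$ times gives $|K(x)|\lesssim_k|x|^{-k}$ for every $k>d$ (the derivatives of $\widehat K$ being then integrable), hence integrability of $K$ at infinity; near the origin I would write $\widehat K=|\xi|^{-\gamma}-|\xi|^{-\gamma}\bigl(1-\chi(\xi)\bigr)$ and note that for $0<\gamma<d$ the first term has inverse Fourier transform $c_{d,\gamma}|x|^{-(d-\gamma)}$, locally integrable exactly because $\gamma>0$, while the second is a compactly supported $L^1$ symbol with bounded inverse transform (when $\gamma\ge d$ the symbol $\widehat K$ is itself rapidly decreasing and $K\in L^1$ at once, the borderline $\gamma=d$ giving only a harmless logarithmic kernel). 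Granting $K\in L^1(\R^d)$, Young's inequality finishes: $\|P_{\ge M}f\|_{L^p_x}=\|T_M g\|_{L^p_x}\le M^{-\gamma}\|K\|_{L^1_x}\,\|g\|_{L^p_x}=M^{-\gamma}\|K\|_{L^1_x}\,\||\nabla|^{\gamma}P_{\ge M}f\|_{L^p_x}$. Thus the main obstacle is this single kernel bound — identifying $T_M$ as a localised Riesz potential and controlling it at both $0$ and $\infty$ — every other estimate reducing to Young's inequality applied to smooth, compactly Fourier-supported multipliers.
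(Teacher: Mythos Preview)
The paper does not prove this lemma; it merely recalls it as standard and points to \cite[Chapter 2]{BCDfourier} and \cite[Appendix]{Tao}. Your argument is correct and is essentially the one found in those references: reduce by scaling to $M=1$, recognise each operator as convolution with the inverse Fourier transform of a nice symbol, and invoke Young's inequality. So there is nothing to compare against.

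Two small remarks on the execution. First, in the $P_{\geq M}$ estimate you say that for $\gamma\geq d$ the symbol $|\xi|^{-\gamma}\chi(\xi)$ is ``rapidly decreasing''; it is not --- it decays only like $|\xi|^{-\gamma}$ --- but what you need is merely $\widehat K\in L^1$ (true for $\gamma>d$), which makes $K$ bounded and hence locally integrable, and your integration-by-parts bound at infinity does the rest. The borderline $\gamma=d$ can be absorbed into the same scheme by a dyadic decomposition of $\chi$: writing $\chi=\sum_{j\geq 0}\psi(2^{-j}\cdot)$ with $\psi$ an annular bump, each piece has $L^1$ kernel norm $\sim 2^{-j\gamma}$, and the series converges for every $\gamma>0$. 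This also gives a unified proof for all $\gamma>0$ without splitting into the cases $0<\gamma<d$, $\gamma=d$, $\gamma>d$. Second, your $H^s$ argument for $|\eta|^{\gamma}\widetilde\varphi(\eta)$ is fine, but note that the same dyadic trick works here too (sum over $j\leq 0$ with $L^1$ kernel norms $\sim 2^{j\gamma}$), giving a slightly more elementary route that avoids fractional Sobolev spaces.
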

\subsection{$I$-operator} \label{subsection I-operator}
Let $0\leq \gamma <2$ and $N\gg 1$. We define the Fourier multiplier $I_N$ by
\[
\widehat{I_N f}(\xi):= m_N(\xi) \hat{f}(\xi),
\]
where $m_N$ is a smooth, radially symmetric, non-increasing function such that 
\begin{align*}
m_N(\xi) := \left\{ 
\begin{array}{cl}
1 &\text{if } |\xi|\leq N, \\
(N^{-1}|\xi|)^{\gamma-2} & \text{if } |\xi| \geq 2N.
\end{array}
\right. 
\end{align*}
We shall drop the $N$ from the notation and write $I$ and $m$ instead of $I_N$ and $m_N$. We collect some basic properties of the $I$-operator in the following lemma. 
\begin{lem} [\cite{Dinhmass}] \label{lem properties I operator}
Let $0\leq \sigma \leq \gamma<2$ and $1<q<\infty$. Then
\begin{align}
\|I f\|_{L^q_x} &\lesssim \|f\|_{L^q_x}, \label{property 1} \\
\| |\nabla|^\sigma P_{>N} f\|_{L^q_x} &\lesssim N^{\sigma-2} \|\Delta I f\|_{L^q_x}, \label{property 2} \\
\|\scal{\nabla}^\sigma f\|_{L^q_x} &\lesssim \|\scal{\Delta} I f\|_{L^q_x}, \label{property 3} \\
\|f\|_{H^\gamma_x} \lesssim \|If\|_{H^2_x} &\lesssim N^{2-\gamma} \|f\|_{H^\gamma_x}, \label{property 4} \\
\|If\|_{\dot{H}^2_x} &\lesssim N^{2-\gamma} \|f\|_{\dot{H}^\gamma_x}. \label{property 5} 
\end{align}
\end{lem}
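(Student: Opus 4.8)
The statement to prove is Lemma 2.9 (the properties of the $I$-operator). Let me think about how each estimate would be proven.

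\textbf{Plan of proof.} The plan is to establish the five estimates of Lemma~\ref{lem properties I operator} mostly by frequency-localized estimates combined with the multiplier properties of $m=m_N$, using the H\"ormander--Mikhlin multiplier theorem to control the $L^q_x$ operator norms. Throughout one keeps in mind the defining properties of $m$: it is smooth, radial, non-increasing in $|\xi|$, equals $1$ for $|\xi|\le N$, and equals $(N^{-1}|\xi|)^{\gamma-2}$ for $|\xi|\ge 2N$; in particular $0<m(\xi)\le 1$ and the symbols $m(\xi)$, $\scal{N^{-1}\xi}^{2-\gamma} m(\xi)$, and $|\xi|^\sigma m(\xi)\scal{\xi}^{-\sigma}$ (for $0\le\sigma\le\gamma$) all satisfy the Mikhlin bounds $|\partial_\xi^\alpha(\cdot)|\lesssim |\xi|^{-|\alpha|}$ uniformly in $N$.

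For $(\ref{property 1})$: since $m$ satisfies the Mikhlin condition uniformly in $N$, the operator $I$ is bounded on $L^q_x$ for $1<q<\infty$ with a constant independent of $N$; this is immediate from the H\"ormander--Mikhlin theorem. For $(\ref{property 2})$: write $|\nabla|^\sigma P_{>N}f$ in Fourier as the multiplier $|\xi|^\sigma \tilde\varphi(\xi/N)$ applied to $f$, where $\tilde\varphi$ is supported in $|\xi|\gtrsim N$; factor out $|\xi|^{-2}m(\xi)^{-1}$ so that $|\nabla|^\sigma P_{>N} = \big[|\xi|^{\sigma-2}m(\xi)^{-1}\tilde\varphi(\xi/N)\big]\cdot(-\Delta)I$, and check that the bracketed symbol, on the set $|\xi|\gtrsim N$ where $m(\xi)^{-1}\le (N^{-1}|\xi|)^{2-\gamma}$ (up to constants, after splitting the regions $N\le|\xi|\le 2N$ and $|\xi|\ge 2N$), has $L^q$-multiplier norm $\lesssim N^{\sigma-2}$ by rescaling $\xi\mapsto N\xi$ and Mikhlin; applying $(\ref{property 1})$-type boundedness then yields $(\ref{property 2})$. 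For $(\ref{property 3})$: decompose $f=P_{\le N}f+P_{>N}f$. On low frequencies $\scal{\nabla}^\sigma P_{\le N} = \scal{\Delta}I \cdot\big[\scal{\xi}^\sigma\scal{\xi^2}^{-1}\varphi(\xi/N)\big]$ and the bracketed multiplier is bounded (since $\sigma\le\gamma<2$ gives $\scal{\xi}^\sigma\lesssim\scal{\xi^2}$, in fact $\sigma\le 2$ suffices), so this piece is controlled by $\|\scal{\Delta}If\|_{L^q_x}$; on high frequencies one uses $(\ref{property 2})$ together with $\|P_{>N}f\|_{L^q_x}\lesssim N^{-2}\|\Delta I f\|_{L^q_x}$ (the $\sigma=0$ case) to bound $\|\scal{\nabla}^\sigma P_{>N}f\|_{L^q_x}\lesssim \|\Delta I f\|_{L^q_x}+\| |\nabla|^\sigma P_{>N}f\|_{L^q_x}\lesssim\|\Delta I f\|_{L^q_x}$.

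For $(\ref{property 4})$ and $(\ref{property 5})$, which are the $L^2$-based (hence Plancherel-accessible) statements, the approach is a direct pointwise comparison of Fourier symbols. For $(\ref{property 5})$: $\|If\|_{\dot H^2_x}^2=\int |\xi|^4 m(\xi)^2|\hat f(\xi)|^2 d\xi$ and one checks pointwise $|\xi|^4 m(\xi)^2\lesssim N^{2(2-\gamma)}|\xi|^{2\gamma}$, i.e. $|\xi|^{2(2-\gamma)}m(\xi)^2\lesssim N^{2(2-\gamma)}$, which holds because $m(\xi)\le 1$ and $m(\xi)\le (N^{-1}|\xi|)^{\gamma-2}$ when $|\xi|\ge N$ (and for $|\xi|\le N$ one bounds $|\xi|^{2(2-\gamma)}\le N^{2(2-\gamma)}$ directly); integrating gives the claim. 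For $(\ref{property 4})$: the right inequality follows from $(\ref{property 5})$ plus the same argument with $\scal{\xi}$ in place of $|\xi|$ (using $m\le1$ to handle the $L^2$ part), and the left inequality $\|f\|_{H^\gamma_x}\lesssim\|If\|_{H^2_x}$ follows from the pointwise bound $\scal{\xi}^{2\gamma}\lesssim \scal{\xi}^4 m(\xi)^2$, equivalently $\scal{\xi}^{2(\gamma-2)}\lesssim m(\xi)^2$: for $|\xi|\le 2N$ this is clear since $m\equiv1$ on $|\xi|\le N$ and $m$ is bounded below on $N\le|\xi|\le 2N$ while $\scal{\xi}^{2(\gamma-2)}\le1$; for $|\xi|\ge 2N$ both sides are comparable to $|\xi|^{2(\gamma-2)}$ up to $N$-dependent constants of the right type.

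I do not expect a genuine obstacle here—this is a lemma of ``bookkeeping'' type whose content is entirely in keeping track of powers of $N$. The one step requiring a little care is $(\ref{property 2})$ (and hence $(\ref{property 3})$): one must be slightly careful in the transition region $N\le|\xi|\le 2N$ where $m$ is merely smooth and bounded below rather than given by an explicit formula, and one must confirm that the Mikhlin constants of the rescaled multipliers are genuinely $N$-independent. Once the $L^q$-boundedness of the relevant rescaled Fourier multipliers is checked via H\"ormander--Mikhlin, every estimate follows by combining this with the Littlewood--Paley decomposition and the elementary symbol comparisons above. $\Box$
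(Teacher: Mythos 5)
Your argument is correct and is essentially the standard one: the paper itself does not prove this lemma but defers to \cite[Lemma 2.7]{Dinhmass}, whose proof proceeds exactly as you describe — H\"ormander--Mikhlin bounds (uniform in $N$) for the $L^q_x$ multiplier estimates $(\ref{property 1})$--$(\ref{property 3})$ combined with the low/high frequency splitting, and pointwise symbol comparisons via Plancherel for the $L^2$-based estimates $(\ref{property 4})$--$(\ref{property 5})$. Your accounting of the powers of $N$ (in particular using $m(\xi)^{-1}\lesssim (N^{-1}|\xi|)^{2-\gamma}$ and $\sigma\le\gamma$ on $|\xi|\gtrsim N$ to extract $N^{\sigma-2}$) is the right bookkeeping.
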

We refer to \cite[Lemma 2.7]{Dinhmass} for the proof of these estimates. We also recall the following product rule which is a modified version of the one given in \cite[Lemma 2.5]{VisanZhang} in the context of nonlinear Schr\"odinger equation. 
\begin{lem} [\cite{Dinhmass}] \label{lem product rule}
Let $\gamma>1, 0<\delta <\gamma-1$ and $1<q, q_1, q_2 <\infty$ be such that $\frac{1}{q}=\frac{1}{q_1}+\frac{1}{q_2}$. Then
\begin{align}
\|I(fg)-(If)g\|_{L^q_x} \lesssim N^{-(2-\gamma+\delta)} \|If\|_{L^{q_1}_x} \|\scal{\nabla}^{2-\gamma+\delta} g\|_{L^{q_2}_x}.
\end{align}
\end{lem}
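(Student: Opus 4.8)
The plan is to run a Littlewood--Paley analysis of the bilinear commutator $T(f,g):=I(fg)-(If)g$, keeping track at each frequency interaction of the gain coming either from the smoothing effect of $I$ on $f$ (which turns a norm of $P_M f$ into a norm of $IP_M f\sim m(M)\,P_Mf$) or from the $2-\gamma+\delta$ derivatives we are allowed to place on $g$. I would freely use that $m$ is a symbol of order $0$ uniformly in $N$, so that $I$ and $I-\mathrm{Id}$ are Mikhlin multipliers (bounded on every $L^r_x$, $1<r<\infty$), that $m-1$ is supported in $\{|\xi|>N\}$, the elementary equivalence $\|IP_Mf\|_{L^r_x}\sim m(M)\|P_Mf\|_{L^r_x}$ for $M>N/8$ (with $m(M)\sim1$ for $M\lesssim N$ and $m(M)\sim(M/N)^{\gamma-2}$ for $M\gtrsim N$), the Bernstein-type bound $\|P_{>N}\psi\|_{L^r_x}\lesssim N^{-s}\|\scal{\nabla}^s\psi\|_{L^r_x}$ for $s\ge0$, and the Littlewood--Paley square-function estimate on $L^r_x$, $1<r<\infty$.

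\textit{Reduction to high-frequency $f$.} First I would split $f=P_{\le N/4}f+P_{>N/4}f$. Since $m\equiv1$ on $\{|\xi|\le N\}$, one has $IP_{\le N/4}f=P_{\le N/4}f$, hence the corresponding piece of the commutator is $(I-\mathrm{Id})(P_{\le N/4}f\cdot g)$; because $I-\mathrm{Id}$ kills frequencies $\le N$ while $P_{\le N/4}f$ lives in $\{|\xi|\le N/2\}$, only $P_{>N/2}g$ contributes, so this piece equals $(I-\mathrm{Id})(P_{\le N/4}f\cdot P_{>N/2}g)$ and is $\lesssim\|P_{\le N/4}f\|_{L^{q_1}_x}\|P_{>N/2}g\|_{L^{q_2}_x}\lesssim N^{-(2-\gamma+\delta)}\|If\|_{L^{q_1}_x}\|\scal{\nabla}^{2-\gamma+\delta}g\|_{L^{q_2}_x}$, using $\|P_{\le N/4}f\|_{L^{q_1}_x}=\|P_{\le N/4}(If)\|_{L^{q_1}_x}\lesssim\|If\|_{L^{q_1}_x}$ and the Bernstein bound with $s=2-\gamma+\delta$. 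Since also $\|IP_{>N/4}f\|_{L^{q_1}_x}=\|P_{>N/4}(If)\|_{L^{q_1}_x}\lesssim\|If\|_{L^{q_1}_x}$, it is enough to prove the estimate with $f$ replaced by $\tilde f:=P_{>N/4}f$ and $\|If\|_{L^{q_1}_x}$ by $\|I\tilde f\|_{L^{q_1}_x}$; the advantage is that then every dyadic block $P_{M_1}\tilde f$ has $M_1>N/8$, so the smoothing of $I$ is always at our disposal.

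\textit{Frequency interactions.} I would then expand $\tilde f=\sum_{M_1>N/8}P_{M_1}f$ and $g=\sum_{M_2}P_{M_2}g$ and split according to $M_2\le M_1/8$ (high--low), $M_1/8<M_2\le8M_1$ (high--high), $M_2>8M_1$ (low--high); in all cases the commutator block vanishes unless $M_1\gtrsim N$ or $M_2\gtrsim N$. In the high--high case the output frequency is $\lesssim M_1\sim M_2$ and the crude bound $\|I(P_{M_1}f\,P_{M_2}g)\|_{L^q_x}+\|(IP_{M_1}f)P_{M_2}g\|_{L^q_x}\lesssim\|P_{M_1}f\|_{L^{q_1}_x}\|P_{M_2}g\|_{L^{q_2}_x}$, combined with $\|P_{M_1}f\|_{L^{q_1}_x}\lesssim m(M_1)^{-1}\|IP_{M_1}f\|_{L^{q_1}_x}\lesssim(M_1/N)^{2-\gamma}\|IP_{M_1}f\|_{L^{q_1}_x}$, does the job. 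In the low--high case the output frequency is $\sim M_2\ge8M_1$, so $I$ applied to the product gains $m(M_2)\le m(M_1)$, which again feeds the factor $m(M_1)$. The delicate case is high--low: the output frequency is $\sim M_1$ and from the Fourier-side representation $\widehat{T(P_{M_1}f,P_{M_2}g)}(\xi)=\int[m(\xi)-m(\xi-\eta)]\widehat{P_{M_1}f}(\xi-\eta)\widehat{P_{M_2}g}(\eta)\,d\eta$, the mean value theorem together with $|\nabla m|\sim m(M_1)/M_1$ on $\{|\zeta|\sim M_1\}$ shows the symbol is $O\big((M_2/M_1)\,m(M_1)\big)$; a standard bilinear multiplier (Coifman--Meyer type) estimate then bounds the block by $(M_2/M_1)\|IP_{M_1}f\|_{L^{q_1}_x}\|P_{M_2}g\|_{L^{q_2}_x}$. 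In every case I next replace $\|P_{M_2}g\|_{L^{q_2}_x}$ by $\langle M_2\rangle^{-(2-\gamma+\delta)}\|P_{M_2}\scal{\nabla}^{2-\gamma+\delta}g\|_{L^{q_2}_x}$.

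\textit{Summation, and the main obstacle.} Writing $h:=\scal{\nabla}^{2-\gamma+\delta}g$: in the high--high and low--high regions only $M_2\gtrsim N$ contributes and $\langle M_2\rangle^{-(2-\gamma+\delta)}\le N^{-(2-\gamma+\delta)}(M_2/N)^{-(2-\gamma+\delta)}$, while the surviving $M$-dependence is a genuine negative power of $M_1/N$ (resp.\ $M_2/N$); one then sums against $\|IP_{M_1}f\|_{L^{q_1}_x}$ and $\|P_{M_2}h\|_{L^{q_2}_x}$ using a pointwise Cauchy--Schwarz, H\"older with exponents $(q_1,q_2)$, the square-function estimate, and $\sum_{M\gtrsim N}(M/N)^{-\varepsilon}<\infty$, together with $\|P_{M_1}(If)\|_{L^{q_1}_x}\lesssim\|I\tilde f\|_{L^{q_1}_x}$. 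In the high--low region the inner sum is $\sum_{M_2\le M_1/8}(M_2/M_1)\langle M_2\rangle^{-(2-\gamma+\delta)}\|P_{M_2}h\|_{L^{q_2}_x}$, whose weight behaves like $M_2^{\gamma-1-\delta}/M_1$; because $\gamma-1-\delta>0$ --- and this is precisely where the hypothesis $\delta<\gamma-1$ is used --- the geometric series is dominated by its top term and yields $M_1^{-(2-\gamma+\delta)}$, after which the $M_1$-sum closes as above. Putting the three regions together gives $\|T(\tilde f,g)\|_{L^q_x}\lesssim N^{-(2-\gamma+\delta)}\|I\tilde f\|_{L^{q_1}_x}\|\scal{\nabla}^{2-\gamma+\delta}g\|_{L^{q_2}_x}$, which with the reduction step completes the proof. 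The hard part is exactly this high--low interaction: there the gain $N^{-(2-\gamma+\delta)}$ cannot be read off from $g$ (whose frequency may be $\ll N$) and must instead be extracted from the cancellation in $m(\xi)-m(\xi-\eta)$ together with the smoothing of $I$ on the high-frequency factor, the associated $M_2$-summation being the source of the restriction $\delta<\gamma-1$.
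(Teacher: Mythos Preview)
Your argument is correct and is precisely the standard paraproduct proof of this commutator estimate. Note that the paper itself does not give a proof of this lemma: it simply refers the reader to \cite[Lemma~2.8]{Dinhmass}, which in turn is an adaptation of \cite[Lemma~2.5]{VisanZhang} to the fourth-order setting. The Littlewood--Paley decomposition you carry out --- reducing to high-frequency $f$, splitting into high--low, high--high, and low--high interactions, and extracting the gain $(M_2/M_1)m(M_1)$ in the high--low case via the mean value theorem on $m(\xi)-m(\xi-\eta)$ --- is exactly the argument in those references. Your identification of the role of the hypothesis $\delta<\gamma-1$ (summability of $M_2^{\gamma-1-\delta}$ in the high--low inner sum) is also correct.

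One minor comment: in the summation step you invoke ``pointwise Cauchy--Schwarz plus the square-function estimate,'' but the block bounds you obtained are $L^q$ bounds, not pointwise ones, so the square function does not immediately apply. The cleaner way to close is simply the triangle inequality: since each block carries a genuine negative power of $M_1/N$ (or $M_2/N$) and $\sup_M\|P_M(If)\|_{L^{q_1}_x}\lesssim\|If\|_{L^{q_1}_x}$, $\sup_M\|P_M h\|_{L^{q_2}_x}\lesssim\|h\|_{L^{q_2}_x}$, the double sum of $L^q$ norms converges geometrically. This is a cosmetic point and does not affect the validity of your proof.
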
  
We again refer the reader to \cite[Lemma 2.8]{Dinhmass} for the proof of this lemma. A direct consequence of Lemma $\ref{lem product rule}$ and $(\ref{chain rule})$ is the following corollary.
\begin{coro} \label{coro product rule}
Let $\gamma>1, 0<\delta<\gamma-1$ and $1<q, q_1, q_2<\infty$ be such that $\frac{1}{q}=\frac{1}{q_1}+\frac{1}{q_2}$. Then 
\begin{align}
\|\nabla I F(u)-(I\nabla u)F'(u)\|_{L^q_x} \lesssim N^{-(2-\gamma+\delta)} \|\nabla I u\|_{L^{q_1}_x} \|\scal{\nabla}^{2-\gamma+\delta} F'(u)\|_{L^{q_2}_x}. \label{product rule}
\end{align}
\end{coro}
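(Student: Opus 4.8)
The plan is to read off the estimate directly from Lemma \ref{lem product rule} after rewriting the left-hand side by means of the chain rule. First I would note that $I$ is a Fourier multiplier, so it commutes with $\nabla$; hence $\nabla I F(u) = I \nabla F(u)$. Applying the chain rule $(\ref{chain rule})$ then gives $\nabla I F(u) = I\big(F'(u)\nabla u\big)$, where, as in $(\ref{chain rule})$, the product $F'(u)\nabla u$ is shorthand for the componentwise expression $\partial_k F(u) = \partial_z F(u)\,\partial_k u + \partial_{\overline z} F(u)\,\partial_k \overline u$. Consequently
\[
\nabla I F(u) - (I\nabla u) F'(u) = I\big((\nabla u)\, F'(u)\big) - (I\nabla u)\, F'(u),
\]
which is precisely of the form $I(fg) - (If)g$ with $f = \nabla u$ and $g = F'(u)$.

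Next I would invoke Lemma \ref{lem product rule}: the hypotheses $\gamma > 1$, $0 < \delta < \gamma - 1$, and $1 < q, q_1, q_2 < \infty$ with $\frac1q = \frac1{q_1} + \frac1{q_2}$ are exactly those assumed in the Corollary, so the Lemma applies with the above choice of $f$ and $g$, yielding
\[
\big\| I\big((\nabla u)\,F'(u)\big) - (I\nabla u)\,F'(u) \big\|_{L^q_x} \lesssim N^{-(2-\gamma+\delta)} \|I\nabla u\|_{L^{q_1}_x} \big\| \scal{\nabla}^{2-\gamma+\delta} F'(u) \big\|_{L^{q_2}_x}.
\]
Since $I$ and $\nabla$ commute, $\|I\nabla u\|_{L^{q_1}_x} = \|\nabla I u\|_{L^{q_1}_x}$, and combining the two displays gives $(\ref{product rule})$. (If one wishes to be fully pedantic about the Wirtinger bookkeeping, one applies the Lemma separately to each of the finitely many scalar summands $\partial_z F(u)\,\partial_k u$ and $\partial_{\overline z} F(u)\,\partial_k \overline u$ and sums, noting that $\|\nabla \overline u\|_{L^{q_1}_x} = \|\nabla u\|_{L^{q_1}_x}$ and that $\scal{\nabla}^{2-\gamma+\delta}$ acts identically on $\partial_z F$ and $\partial_{\overline z} F$ up to the harmless unimodular factor $z/\overline z$.)

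There is essentially no obstacle here: the statement is, as advertised, a direct consequence of Lemma \ref{lem product rule} and the chain rule. The only points requiring a line of care are the commutation of $I$ with $\nabla$ (immediate, both being Fourier multipliers) and the interpretation of $F'(u)\nabla u$ as a sum of scalar products so that Lemma \ref{lem product rule}, stated for scalar $f,g$, applies term by term.
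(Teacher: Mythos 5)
Your proposal is correct and matches the paper's intent exactly: the paper gives no separate proof, stating only that the corollary is a direct consequence of Lemma \ref{lem product rule} and the chain rule $(\ref{chain rule})$, which is precisely the argument you carry out (with $f=\nabla u$, $g=F'(u)$, and the commutation of $I$ with $\nabla$). The extra care you take with the Wirtinger bookkeeping is harmless and fills in the only detail the paper leaves implicit.
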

\subsection{Interaction Morawetz inequality} \label{subsection interaction morawetz inequality}
We now recall the interaction Morawetz inequality for the nonlinear fourth-order Schr\"odinger equation. 
\begin{prop}[Interaction Morawetz inequality \cite{Pausadercubic}, \cite{MiaoWuZhang}]
Let $d\geq 5$, $J$ be a compact time interval and $u$ a solution to \emph{(NL4S)} on the spacetime slab $J\times \R^d$. Then we have the following a priori estimate:
\begin{align}
\||\nabla|^{-\frac{d-5}{4}} u\|_{L^4_t(J, L^4_x)} \lesssim \|u_0\|^{\frac{1}{2}}_{L^2_x} \|u\|^{\frac{1}{2}}_{L^\infty_t(J, \dot{H}^{\frac{1}{2}}_x)}. \label{interaction morawetz}
\end{align} 
\end{prop}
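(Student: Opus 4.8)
By a standard approximation argument it is enough to prove this a priori bound when $u$ is a smooth, rapidly decaying solution of \emph{(NL4S)} on the compact interval $J$, so that all the manipulations below are legitimate. Following the scheme of \cite{Pausadercubic,MiaoWuZhang}, I would set $a(z):=|z|$ and introduce the interaction Morawetz action
\[
\mathcal{M}(t):=\int_{\R^d}\int_{\R^d}|u(t,y)|^2\,\nabla_x a(x-y)\cdot\text{Im}\big(\overline{u(t,x)}\,\nabla u(t,x)\big)\,dx\,dy
\]
(suitably symmetrized in $x\leftrightarrow y$). Since $|\nabla a|\equiv1$, mass conservation $\|u(t)\|_{L^2_x}=\|u_0\|_{L^2_x}$ together with a Cauchy--Schwarz/Sobolev estimate for the momentum density (one may first bound $|\mathcal{M}(t)|$ by $\|u(t)\|_{\dot{H}^{1/4}_x}^4$ and then interpolate) yields $\sup_{t\in J}|\mathcal{M}(t)|\lesssim\|u_0\|_{L^2_x}^2\|u\|_{L^\infty_t(J,\dot{H}^{1/2}_x)}^2$. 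Next I would differentiate $\mathcal{M}$ in time using two local conservation laws for \emph{(NL4S)}: the mass law $\partial_t|u|^2=-\nabla\cdot\vec{p}$ with biharmonic mass current $\vec{p}=2\,\text{Im}\big(\overline{u}\,\nabla\Delta u-\Delta u\,\nabla\overline{u}\big)$ (which, unlike the Schr\"odinger case, carries three derivatives), and the momentum law $\partial_t\,\text{Im}(\overline{u}\nabla u)=\nabla\cdot\mathcal{T}+\tfrac{\nu-1}{\nu+1}\nabla(|u|^{\nu+1})$, where the stress tensor $\mathcal{T}$ is the bilinear form in $\nabla u,\nabla^2u,\nabla^3u$ obtained by repeatedly integrating by parts in $\text{Re}\big(\overline{u}\,\partial_j\Delta^2u-\partial_ju\,\Delta^2\overline{u}\big)$. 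Substituting these and integrating by parts in $x$ and $y$ turns $\tfrac{d}{dt}\mathcal{M}(t)$ into a sum of a current--current term (in $\vec{p}(t,y)$, $\mathrm{Hess}\,a(x-y)$, $\text{Im}(\overline{u}\nabla u)(t,x)$), a density--stress term (in $|u(t,y)|^2$, $\mathrm{Hess}\,a(x-y)$, $\mathcal{T}(t,x)$), and a nonlinear term which, since $\nu>1$ and $\Delta a(z)=(d-1)|z|^{-1}\ge0$, carries the favorable sign.

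The crux — and the step I expect to be the main obstacle — is to bound the linear part of $\tfrac{d}{dt}\mathcal{M}(t)$ below by a good positive quantity. After symmetrization in $x\leftrightarrow y$ and a long sequence of integrations by parts using $\mathrm{Hess}\,a(z)=|z|^{-1}(I-|z|^{-2}z\otimes z)$, $\Delta a(z)=(d-1)|z|^{-1}$, $\Delta^2 a(z)=-(d-1)(d-3)|z|^{-3}$ and $\Delta^3 a(z)=3(d-1)(d-3)(d-5)|z|^{-5}$, one should find the differential inequality
\[
\tfrac{d}{dt}\mathcal{M}(t)\;\gtrsim\; c_d\int_{\R^d}\int_{\R^d}\frac{|u(t,x)|^2\,|u(t,y)|^2}{|x-y|^5}\,dx\,dy
\]
for some $c_d>0$, modulo manifestly nonnegative quadratic ``momentum bracket'' terms (for $d=5$ the kernel $\Delta^3 a$ is a positive multiple of the Dirac mass and the right side reads $c_5\|u(t)\|_{L^4_x}^4$). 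This is considerably more delicate than in the second-order case, precisely because the biharmonic mass current carries three derivatives while the momentum density carries one, so the algebraic cancellations leading to the good term are far less transparent; moreover intermediate contributions weighted by $\Delta^2 a$ (whose sign is unfavorable once $d>3$) and by lower derivatives of $a$ must be tracked and either cancelled or absorbed — this is exactly where the positivity of $c_d$ forces $d\ge5$, and where the improvement from the range $d\ge7$ of \cite{Pausadercubic} to the range $d\ge5$ of \cite{MiaoWuZhang} requires extra care.

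Granting this, I would integrate over $J$ and invoke the bound on $\sup_t|\mathcal{M}(t)|$ to get
\[
c_d\int_J\int_{\R^d}\int_{\R^d}\frac{|u(t,x)|^2|u(t,y)|^2}{|x-y|^5}\,dx\,dy\,dt\;\le\;\mathcal{M}(T)-\mathcal{M}(0)\;\lesssim\;\|u_0\|_{L^2_x}^2\|u\|_{L^\infty_t(J,\dot{H}^{1/2}_x)}^2.
\]
Finally, since $|z|^{-5}$ is a constant multiple of the kernel of the Riesz potential $|\nabla|^{-(d-5)}$, the left-hand side equals (up to a constant) $\||\nabla|^{-\frac{d-5}{2}}(|u|^2)\|_{L^2_t(J,L^2_x)}^2$, and the elementary fractional-integration inequality $\||\nabla|^{-\frac{d-5}{2}}(|v|^2)\|_{L^2_x}\gtrsim\||\nabla|^{-\frac{d-5}{4}}v\|_{L^4_x}^2$ converts it into $\||\nabla|^{-\frac{d-5}{4}}u\|_{L^4_t(J,L^4_x)}^4$. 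Taking fourth roots yields \eqref{interaction morawetz}.
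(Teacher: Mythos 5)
The paper does not actually prove this proposition: it is imported verbatim from the cited references, with the remark that Pausader \cite{Pausadercubic} established it for $d\geq 7$ and Miao--Wu--Zhang \cite{MiaoWuZhang} extended it to $d\geq 5$. Your outline is a faithful reconstruction of the strategy of those references, and the peripheral ingredients are all correctly identified: the interaction action $\mathcal{M}(t)$ with weight $a(z)=|z|$, the boundary bound $\sup_{t\in J}|\mathcal{M}(t)|\lesssim \|u_0\|_{L^2_x}^2\|u\|^2_{L^\infty_t \dot{H}^{1/2}_x}$ (via the standard Calder\'on-type estimate on the momentum density; your parenthetical detour through $\|u\|_{\dot H^{1/4}}^4$ is harmless since $\|u\|_{\dot H^{1/4}}^2\leq\|u\|_{L^2}\|u\|_{\dot H^{1/2}}$), the identification of the good kernel with $\Delta^3|z|=3(d-1)(d-3)(d-5)|z|^{-5}$ (a Dirac mass when $d=5$), and the final conversion of $\int\!\!\int\!\!\int |u(x)|^2|u(y)|^2|x-y|^{-5}$ into $\||\nabla|^{-\frac{d-5}{4}}u\|^4_{L^4_{t,x}}$ via the Riesz potential and the square-function inequality $\||\nabla|^{-\frac{d-5}{2}}(|v|^2)\|_{L^2}\gtrsim\||\nabla|^{-\frac{d-5}{4}}v\|_{L^4}^2$.

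The genuine gap is the step you yourself flag: the monotonicity bound $\tfrac{d}{dt}\mathcal{M}(t)\gtrsim \int\!\!\int |u(t,x)|^2|u(t,y)|^2|x-y|^{-5}\,dx\,dy$ is asserted, not derived, and it is not a routine adaptation of the second-order computation. Because the biharmonic mass current $\vec p=2\,\mathrm{Im}(\overline{u}\nabla\Delta u-\Delta u\nabla\overline{u})$ carries three derivatives and the momentum flux for $\Delta^2$ is a fourth-order tensor, the symmetrized derivative of $\mathcal{M}$ produces a long list of terms weighted by derivatives of $a$ up to order six, several of which --- notably those weighted by $\Delta^2 a=-(d-1)(d-3)|z|^{-3}<0$ and the cross terms pairing $\nabla u$ against $\nabla\Delta u$ --- have an unfavorable sign and must be cancelled or absorbed rather than discarded. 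This sign bookkeeping is the entire content of \cite{Pausadercubic} (which only achieves $d\geq 7$) and of the additional argument in \cite{MiaoWuZhang} needed to reach $d\geq 5$; without carrying it out, the claimed positivity of $c_d$ for all $d\geq 5$ is unsubstantiated, and the proposal does not yet constitute a proof. If you intend to include a proof rather than a citation, this computation must be written out (or the relevant proposition of \cite{MiaoWuZhang} invoked explicitly, as the paper does).
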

This estimate was first established by Pausader in \cite{Pausadercubic} for $d\geq 7$. Later, Miao-Wu-Zhang in \cite{MiaoWuZhang} extended this interaction Morawetz estimate to $d\geq 5$. By interpolating $(\ref{interaction morawetz})$ and the trivial estimate for $0<\sigma\leq \gamma$,
\[
\|u\|_{L^\infty_t(J, \dot{H}^\sigma_x)} \leq \|u\|_{L^\infty_t(J, \dot{H}^\sigma_x)},
\]
we obtain
\begin{align}
\|u\|_{M^\sigma(J)} \lesssim \Big(\|u_0\|_{L^2_x} \|u\|_{L^\infty_t(J,\dot{H}^{\frac{1}{2}}_x)}\Big)^{\frac{2\sigma}{d-5+4\sigma}} \|u\|_{L^\infty_t(J,\dot{H}^\sigma_x)}^{\frac{d-5}{d-5+4\sigma}}, \label{interaction morawetz interpolation}
\end{align}
where
\begin{align}
\|u\|_{M^\sigma(J)}:= \|u\|_{L^{\frac{d-5+4\sigma}{\sigma}}_t L^{\frac{2(d-5+4\sigma)}{d-5+2\sigma}}_x}. \label{define M sigma}
\end{align} 
\section{Almost conservation law} \label{section almost conservation law}
\setcounter{equation}{0}
For any spacetime slab $J\times \R^d$, we define
\[
Z_I(J):= \sup_{(p,q)\in B} \|\scal{\Delta} I u\|_{L^p_t(J, L^q_x)}.
\]
Note that in our considerations, the biharmonic admissible condition $(p,q)\in B$ ensures $q<\infty$. Let us start with the following commutator estimates.
\begin{lem} \label{lem commutator estimate}
Let $5\leq d \leq 11, \frac{2+\Gace}{2}<\gamma<2, 0<\delta< \min \{2\gamma-\Gace-2, \gamma-1\}, 0<\sigma \leq \gamma$ and 
\begin{align*}
\max\left\{\frac{8(d-5+4\sigma)}{d(d-5+2\sigma)+8\sigma}, 1 \right\} <\nu-1< \min\left\{\frac{d-5+4\sigma}{2\sigma},\frac{8}{d-2\gamma}\right\}. \nonumber
\end{align*} 
Assume that 
\[
\|u\|_{M^\sigma(J)} \leq \mu,
\]
for some small constant $\mu>0$. Then
\begin{align}
\|\nabla I F(u)-(I\nabla u) F'(u)\|_{L^2_t(J, L^{\frac{2d}{d+2}}_x)} &\lesssim N^{-(2-\gamma+\delta)} Z_I(J)\left(\mu^\theta Z_I^{1-\theta}(J)+ Z_I(J)\right)^{\nu-1} \label{commutator estimate 1} \\
\|\nabla I F(u)\|_{L^2_t(J, L^{\frac{2d}{d+2}}_x)} &\lesssim N^{-(2-\gamma+\delta)}Z_I^{\nu}(J) + \mu^{(\nu-1)\theta}Z_I^{1+(\nu-1)(1-\theta)}(J), \label{commutator estimate 2}
\end{align}
where
\begin{align}
\theta:= \frac{(d-5+4\sigma)(8-(d-4)(\nu-1))}{2(\nu-1)(2(d-5)+(12-d)\sigma)} \in (0,1). \label{define theta}
\end{align}  
\end{lem}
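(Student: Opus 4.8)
The plan is to reduce both estimates to the product rule of Corollary~\ref{coro product rule} together with the gain-of-derivative Strichartz estimate $(\ref{strichartz estimate biharmonic 4order})$ and the fractional chain rules of Lemmas~\ref{lem fractional chain rule C1} and \ref{lem fractional chain rule holder continuous}. For $(\ref{commutator estimate 1})$, I first apply Corollary~\ref{coro product rule} with the H\"older exponents $q=\frac{2d}{d+2}$, $q_1$, $q_2$ chosen so that $\frac{1}{q}=\frac{1}{q_1}+\frac{1}{q_2}$ and so that $\|\nabla Iu\|_{L^{q_1}_x}$ sits at a biharmonic-admissible spatial exponent (hence controlled by $Z_I(J)$ after also using the $L^p_t$ H\"older splitting in time). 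This leaves the factor $\|\scal{\nabla}^{2-\gamma+\delta}F'(u)\|_{L^{q_2}_x}$ in an appropriate mixed norm over $J$. Since $F'(z)=O(|z|^{\nu-1})$ with $F'\in C^{0,\min\{1,\nu-2\}}$, I estimate this by the chain rule: when $\nu-1\geq 1$ use Lemma~\ref{lem fractional chain rule C1} to put $\|\scal{\nabla}^{2-\gamma+\delta}F'(u)\|_{L^{q_2}_x}\lesssim \|F''(u)\|\,\|u\|^{\nu-2}\cdots\|\scal{\nabla}^{2-\gamma+\delta}u\|$, and when $\nu-1$ is small use Lemma~\ref{lem fractional chain rule holder continuous} with $\beta=\nu-2$ (or $\beta=\nu-1$, interpreting $F'$ directly), landing on a product of an $L^r_x$ norm of a power of $u$ and a power of $\|\scal{\nabla}^\rho u\|_{L^s_x}$ with $\rho$ close to $2$. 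Then $\scal{\nabla}^{2-\gamma+\delta}u$ and $\scal{\nabla}^\rho u$ are controlled by $Z_I(J)$ via $(\ref{property 3})$, while the remaining low-regularity powers of $u$ (total homogeneity $\nu-1$) are interpolated between the Morawetz norm $\|u\|_{M^\sigma(J)}\le\mu$ and Strichartz norms bounded by $Z_I(J)$; this is exactly where the exponent $\theta$ in $(\ref{define theta})$ comes from, as the unique interpolation weight making the space-time exponents match. Collecting the factors gives the product $\mu^\theta Z_I^{1-\theta}(J)$ raised to a total power $\nu-1$, times one extra $Z_I(J)$ and the prefactor $N^{-(2-\gamma+\delta)}$.

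For $(\ref{commutator estimate 2})$, I write $\nabla IF(u)=\big(\nabla IF(u)-(I\nabla u)F'(u)\big)+(I\nabla u)F'(u)$; the first term is bounded by $(\ref{commutator estimate 1})$, which is absorbed into the right-hand side since $N^{-(2-\gamma+\delta)}Z_I(J)(\mu^\theta Z_I^{1-\theta})^{\nu-1}$ is dominated by $N^{-(2-\gamma+\delta)}Z_I^\nu(J)+\mu^{(\nu-1)\theta}Z_I^{1+(\nu-1)(1-\theta)}(J)$ up to constants (expand the power). For the main term $(I\nabla u)F'(u)$ I use H\"older in space with $\frac{d+2}{2d}=\frac{1}{q_1}+\frac{\nu-1}{r}$, putting $I\nabla u$ at a biharmonic-admissible exponent controlled by $Z_I(J)$ and the $\nu-1$ copies of $u$ in a space where they can be interpolated between $M^\sigma(J)$ (weight $\theta$ each) and $Z_I(J)$-controlled Strichartz norms (weight $1-\theta$ each), again producing $\mu^{(\nu-1)\theta}Z_I^{1+(\nu-1)(1-\theta)}(J)$.

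\textbf{Where the effort concentrates.}
The routine parts are the commutator/chain-rule reductions; the genuinely delicate step is the \emph{bookkeeping of the interpolation}: one must verify that the Lebesgue and time exponents produced by (i) the fixed target $L^2_t L^{2d/(d+2)}_x$, (ii) the H\"older splittings, (iii) the fractional chain rule (which fixes the relation between $\rho$, the H\"older exponents and $q$), and (iv) the biharmonic-admissibility constraints, are simultaneously consistent, and that the resulting interpolation exponent is precisely the $\theta$ in $(\ref{define theta})$ and lies in $(0,1)$. This is exactly what the hypotheses of the lemma encode: the two-sided bound on $\nu-1$ guarantees $\theta\in(0,1)$ and that all H\"older exponents are admissible (in particular $q_2$, $r$, $s$ stay in $(1,\infty)$ and the time exponents are $\ge 2$), the constraint $\nu-1<\frac{8}{d-2\gamma}$ secures a high-regularity Strichartz exponent below the critical one, the restriction $5\le d\le 11$ with $\gamma>\frac{2+\Gact}{2}$ ensures the nonlinearity carries the needed $2-\gamma+\delta>0$ derivatives (i.e.\ $F'\in C^{0,\beta}$ with $\beta$ large enough and $\lceil 2-\gamma+\delta\rceil\le\nu-1$ or the H\"older version applies), and $0<\delta<\min\{2\gamma-\Gact-2,\gamma-1\}$ keeps us in the range of Corollary~\ref{coro product rule}. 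I would organize the proof by first recording the admissible pairs to be used, then doing the $\nu-1\ge 2$ (use Lemma~\ref{lem fractional chain rule C1} twice) and $1<\nu-1<2$ (use Lemma~\ref{lem fractional chain rule holder continuous}) cases in parallel, so that the exponent arithmetic is displayed once and reused for both $(\ref{commutator estimate 1})$ and $(\ref{commutator estimate 2})$.
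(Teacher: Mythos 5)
Your overall architecture matches the paper's: apply Corollary \ref{coro product rule} with $q=\frac{2d}{d+2}$, put $\nabla Iu$ at a biharmonic-admissible pair (after a Sobolev embedding converting $\nabla Iu$ to $\Delta Iu$), handle $\scal{\nabla}^{2-\gamma+\delta}F'(u)$ by the fractional chain rule, and close by interpolating the remaining powers of $u$ against the Morawetz norm. Two remarks on the chain-rule step: under the stated hypotheses $\nu-1>1$, so $F'\in C^1$ and only Lemma \ref{lem fractional chain rule C1} is needed here (the H\"older-continuous version enters only later, for $F''$ in the almost conservation law); and the paper first replaces the order $2-\gamma+\delta$ by $\gamma-\Gact$ (legitimate since $\delta<2\gamma-\Gact-2$), which is what makes the subsequent Sobolev/admissibility arithmetic come out.

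The genuine gap is in the interpolation step you describe as ``interpolate the remaining low-regularity powers of $u$ between $M^\sigma(J)$ and Strichartz norms bounded by $Z_I(J)$.'' The non-Morawetz endpoint of that interpolation sits, at time exponent $\frac{d-5+4\sigma}{\sigma}$, at a spatial exponent reachable from the biharmonic-admissible one only by a Sobolev embedding costing \emph{two} full derivatives. But $Z_I(J)$ controls $\|\Delta Iu\|$, not $\|\Delta u\|$: the $I$-operator loses $2-\gamma$ derivatives above frequency $N$, so $\|u\|$ at that endpoint is \emph{not} bounded by $Z_I(J)$ for the full solution. The fix, which is what the paper does, is to split $u=P_{\leq N}u+P_{>N}u$ (and three ways, $P_{\leq 1}+P_{1<\cdot\leq N}+P_{>N}$, for the factor carrying $|\nabla|^{\gamma-\Gact}$): the low-frequency piece admits the two-derivative embedding since $I$ is essentially the identity there and yields $\mu^\theta Z_I^{1-\theta}$, while the high-frequency piece is estimated differently, via a Sobolev embedding costing only $\Gact\leq\gamma$ derivatives followed by $(\ref{property 2})$, producing $N^{\Gact-2}Z_I\lesssim Z_I$. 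This high-frequency contribution is precisely the origin of the extra $+\,Z_I(J)$ inside the parentheses of $(\ref{commutator estimate 1})$--$(\ref{commutator estimate 2})$; your collected bound $\bigl(\mu^\theta Z_I^{1-\theta}\bigr)^{\nu-1}$ omits it and cannot be obtained by the argument you sketch. Once the frequency decomposition is inserted, the rest of your bookkeeping (including the derivation of $\theta$ and the treatment of $(I\nabla u)F'(u)$ for the second estimate) goes through as in the paper.
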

\begin{proof}
For simplifying the presentation, we shall drop the dependence on the time interval $J$. Denote 
\[
\varepsilon:= \frac{4(\nu-1)\sigma}{d-5+4\sigma -2(\nu-1)\sigma}.
\] 
It is easy to see from our assumptions that $\varepsilon>0$. We next apply $(\ref{product rule})$ with $q=\frac{2d}{d+2}, q_1=\frac{2d(2+\varepsilon)}{(d-2)(2+\varepsilon)-8}$ and $q_2=\frac{d(2+\varepsilon)}{2\varepsilon+8}$ to get
\[
\|\nabla I F(u)-(I\nabla u) F'(u)\|_{L^{\frac{2d}{d+2}}_x} \lesssim N^{-\alpha} \|\nabla I u\|_{L^{\frac{2d(2+\varepsilon)}{(d-2)(2+\varepsilon)-8}}_x} \|\scal{\nabla}^\alpha F'(u)\|_{L^{\frac{d(2+\varepsilon)}{2\varepsilon+8}}_x},
\]
where $\alpha=2-\gamma+\delta$. Note that $q_1$ is well-defined since $(d-2)(2+\varepsilon)-8>0$. We then apply H\"older's inequality in time to have
\begin{align}
\|\nabla I F(u)-(I\nabla u) F'(u)\|_{L^2_tL^{\frac{2d}{d+2}}_x} \lesssim N^{-\alpha} \|\nabla I u\|_{L^{2+\varepsilon}_tL^{\frac{2d(2+\varepsilon)}{(d-2)(2+\varepsilon)-8}}_x} \|\scal{\nabla}^\alpha F'(u)\|_{L^{\frac{2(2+\varepsilon)}{\varepsilon}}_tL^{\frac{d(2+\varepsilon)}{2\varepsilon+8}}_x}. \label{commutator estimate 1 sub 1}
\end{align}
For the first factor in the right hand side of $(\ref{commutator estimate 1 sub 1})$, we use the Sobolev embedding to obtain
\begin{align}
\|\nabla I u\|_{L^{2+\varepsilon}_t L^{\frac{2d(2+\varepsilon)}{(d-2)(2+\varepsilon)-8}}_x} \lesssim \|\Delta I u\|_{L^{2+\varepsilon}_t L^{\frac{2d(2+\varepsilon)}{d(2+\varepsilon)-8}}_x} \lesssim Z_I, \label{first factor estimate}
\end{align}
where $\Big(2+\varepsilon, \frac{2d(2+\varepsilon)}{d(2+\varepsilon)-8}\Big)$ is a biharmonic admissible pair. To treat the second factor in the right hand side of $(\ref{commutator estimate 1 sub 1})$, we note that $\alpha < \gamma-\Gact$ by our assumption on $\delta$. Thus
\begin{align}
\|\scal{\nabla}^\alpha F'(u)\|_{L^{\frac{2(2+\varepsilon)}{\varepsilon}}_tL^{\frac{d(2+\varepsilon)}{2\varepsilon+8}}_x} &\lesssim \|\scal{\nabla}^{\gamma-\Gact} F'(u)\|_{L^{\frac{2(2+\varepsilon)}{\varepsilon}}_tL^{\frac{d(2+\varepsilon)}{2\varepsilon+8}}_x} \nonumber \\
&\lesssim  \|F'(u)\|_{L^{\frac{2(2+\varepsilon)}{\varepsilon}}_tL^{\frac{d(2+\varepsilon)}{2\varepsilon+8}}_x} + \||\nabla|^{\gamma-\Gact} F'(u)\|_{L^{\frac{2(2+\varepsilon)}{\varepsilon}}_tL^{\frac{d(2+\varepsilon)}{2\varepsilon+8}}_x}. \label{commutator estimate}
\end{align}
Since $F'(u)=O(|u|^{\nu-1})$, we bound the first term in $(\ref{commutator estimate})$ as  
\[
\|F'(u)\|_{L^{\frac{2(2+\varepsilon)}{\varepsilon}}_tL^{\frac{d(2+\varepsilon)}{2\varepsilon+8}}_x} \lesssim \|u\|^{\nu-1}_{L^{\frac{2(\nu-1)(2+\varepsilon)}{\varepsilon}}_tL^{\frac{d(\nu-1)(2+\varepsilon)}{2\varepsilon+8}}_x}. 
\]
By the choice of $\varepsilon$, we have
\[
\frac{2(\nu-1)(2+\varepsilon)}{\varepsilon}=\frac{d-5+4\sigma}{\sigma}, \quad \frac{d(\nu-1)(2+\varepsilon)}{2\varepsilon+8}=\frac{d(\nu-1)(d-5+4\sigma)}{4(d-5+4\sigma-(\nu-1)\sigma)}.
\]
We next split $u:= P_{\leq N}u + P_{>N}u$. For the low frequency part, we estimate
\begin{align}
\|P_{\leq N} u\|_{L^{\frac{d-5+4\sigma}{\sigma}}_tL^{\frac{d(\nu-1)(d-5+4\sigma)}{4(d-5+4\sigma-(\nu-1)\sigma)}}_x} &\lesssim \|P_{\leq N} u\|^\theta_{M^\sigma} \|P_{\leq N}u\|^{1-\theta}_{L^{\frac{d-5+4\sigma}{\sigma}}_t L^{\frac{2d(d-5+4\sigma)}{(d-4)(d-5+4\sigma)-8\sigma}}_x} \nonumber \\
&\lesssim  \|P_{\leq N} u\|^\theta_{M^\sigma} \|\Delta P_{\leq N}u\|^{1-\theta}_{L^{\frac{d-5+4\sigma}{\sigma}}_t L^{\frac{2d(d-5+4\sigma)}{d(d-5+4\sigma)-8\sigma}}_x} \nonumber \\
&\lesssim \mu^\theta Z^{1-\theta}_I, \label{low frequency estimate}
\end{align}
where $\theta$ is given in $(\ref{define theta})$. Here the first line follows from H\"older's inequality, and the second line makes use of the Sobolev embedding. The last inequality uses the fact that $\Big(\frac{d-5+4\sigma}{\sigma}, \frac{2d(d-5+4\sigma)}{d(d-5+4\sigma)-8\sigma}\Big)$ is biharmonic admissible. Note that our assumptions ensure $\theta \in (0,1)$. For the high frequency part, the Sobolev embedding gives
\begin{align}
\|P_{>N} u\|_{L^{\frac{d-5+4\sigma}{\sigma}}_t L^{\frac{d(\nu-1)(d-5+4\sigma)}{4(d-5+4\sigma-(\nu-1)\sigma)}}_x} \lesssim \| |\nabla|^{\Gact} P_{>N} u \|_{L^{\frac{d-5+4\sigma}{\sigma}}_t L^{\frac{2d(d-5+4\sigma)}{d(d-5+4\sigma)-8\sigma}}_x } \lesssim N^{\Gact-2} Z_I. \label{high frequency estimate}
\end{align}
Here $\Big(\frac{d-5+4\sigma}{\sigma}, \frac{2d(d-5+4\sigma)}{d(d-5+4\sigma)-8\sigma} \Big)$ is biharmonic admissible. Thus, we obtain
\begin{align}
\|u\|_{L^{\frac{2(\nu-1)(2+\varepsilon)}{\varepsilon}}_t L^{\frac{d(\nu-1)(2+\varepsilon)}{2\varepsilon+8}}_x} \lesssim \mu^\theta Z_I^{1-\theta} + N^{\Gact-2}Z_I. \label{u norm estimate}
\end{align}
In particular,
\begin{align}
\|F'(u)\|_{L^{\frac{2(2+\varepsilon)}{\varepsilon}}_tL^{\frac{d(2+\varepsilon)}{2\varepsilon+8}}_x} \lesssim (\mu^\theta Z^{1-\theta}_I+ Z_I)^{\nu-1}. \label{nonhomogeneous term estimate}
\end{align}
We next treat the second term in $(\ref{commutator estimate})$. Since $\nu-1>1$, we are able to apply Lemma $\ref{lem fractional chain rule C1}$ to get
\begin{align}
\||\nabla|^{\gamma-\Gact} F'(u)\|_{L^{\frac{2(2+\varepsilon)}{\varepsilon}}_t L^{\frac{d(2+\varepsilon)}{2\varepsilon+8}}_x } &\lesssim \|F''(u)\|_{L^{\frac{2(\nu-1)(2+\varepsilon)}{\varepsilon(\nu-2)}}_t L^{\frac{d(\nu-1)(2+\varepsilon)}{(2\varepsilon+8)(\nu-2)}}_x} \||\nabla|^{\gamma-\Gact} u\|_{L^{\frac{2(\nu-1)(2+\varepsilon)}{\varepsilon}}_t L^{\frac{d(\nu-1)(2+\varepsilon)}{2\varepsilon+8}}_x} \nonumber \\
&\lesssim \|u\|^{\nu-2}_{L^{\frac{2(\nu-1)(2+\varepsilon)}{\varepsilon}}_t L^{\frac{d(\nu-1)(2+\varepsilon)}{2\varepsilon+8}}_x} \||\nabla|^{\gamma-\Gact} u\|_{L^{\frac{2(\nu-1)(2+\varepsilon)}{\varepsilon}}_t L^{\frac{d(\nu-1)(2+\varepsilon)}{2\varepsilon+8}}_x}, \label{homogeneous term}
\end{align}
where $F''(u)=O(|u|^{\nu-2})$. The first factor in the right hand side of $(\ref{homogeneous term})$ is treated in $(\ref{u norm estimate})$. For the second factor, we split $u:= P_{\leq 1}u + P_{1<\cdot \leq N} u + P_{>N} u$. We use Bernstein inequality and estimate as in $(\ref{low frequency estimate})$,
\begin{align*}
\||\nabla|^{\gamma-\Gact} P_{\leq 1} u\|_{L^{\frac{2(\nu-1)(2+\varepsilon)}{\varepsilon}}_t L^{\frac{d(\nu-1)(2+\varepsilon)}{2\varepsilon+8}}_x} \lesssim \|P_{\leq 1} u\|_{L^{\frac{2(\nu-1)(2+\varepsilon)}{\varepsilon}}_t L^{\frac{d(\nu-1)(2+\varepsilon)}{2\varepsilon+8}}_x} \lesssim \mu^\theta Z_I^{1-\theta}.
\end{align*}
The intermediate term is bounded by
\begin{align*}
\||\nabla|^{\gamma-\Gact} P_{1<\cdot \leq N} u\|_{L^{\frac{2(\nu-1)(2+\varepsilon)}{\varepsilon}}_t L^{\frac{d(\nu-1)(2+\varepsilon)}{2\varepsilon+8}}_x} &\lesssim \||\nabla|^{\gamma} P_{1<\cdot\leq N} u\|_{L^{\frac{2(\nu-1)(2+\varepsilon)}{\varepsilon}}_t L^{\frac{2d(\nu-1)(2+\varepsilon)}{d(\nu-1)(2+\varepsilon)-4\varepsilon}}_x} \\
&\lesssim  \|\Delta I P_{1<\cdot\leq N} u\|_{L^{\frac{2(\nu-1)(2+\varepsilon)}{\varepsilon}}_t L^{\frac{2d(\nu-1)(2+\varepsilon)}{d(\nu-1)(2+\varepsilon)-4\varepsilon}}_x}  \\
&\lesssim  Z_I. 
\end{align*}
Here we use
\[
\||\nabla|^{\gamma} (\Delta I)^{-1}\|_{L^{\frac{2d(\nu-1)(2+\varepsilon)}{d(\nu-1)(2+\varepsilon)-4\varepsilon}}_x \rightarrow L^{\frac{2d(\nu-1)(2+\varepsilon)}{d(\nu-1)(2+\varepsilon)-4\varepsilon}}_x} \lesssim 1,
\]
and the fact $\Big(\frac{2(\nu-1)(2+\varepsilon)}{\varepsilon}, \frac{2d(\nu-1)(2+\varepsilon)}{d(\nu-1)(2+\varepsilon)-4\varepsilon}\Big)$ is biharmonic admissible. 
Finally, we use $(\ref{property 2})$ to estimate
\begin{align*}
\||\nabla|^{\gamma-\Gact} P_{>N} u\|_{L^{\frac{2(\nu-1)(2+\varepsilon)}{\varepsilon}}_t L^{\frac{d(\nu-1)(2+\varepsilon)}{2\varepsilon+8}}_x} &\lesssim \||\nabla|^{\gamma} P_{>N} u\|_{L^{\frac{2(\nu-1)(2+\varepsilon)}{\varepsilon}}_t L^{\frac{2d(\nu-1)(2+\varepsilon)}{d(\nu-1)(2+\varepsilon)-4\varepsilon}}_x} \\
&\lesssim N^{\gamma -2} \|\Delta P_{>N} u\|_{L^{\frac{2(\nu-1)(2+\varepsilon)}{\varepsilon}}_t L^{\frac{2d(\nu-1)(2+\varepsilon)}{d(\nu-1)(2+\varepsilon)-4\varepsilon}}_x} \\
&\lesssim N^{\gamma-2} Z_I.
\end{align*}
Combining three terms yields
\begin{align}
\||\nabla|^{\gamma-\Gact} u\|_{L^{\frac{2(2+\varepsilon)}{\varepsilon}}_t L^{\frac{d(\nu-1)(2+\varepsilon)}{2\varepsilon+8}}_x } \lesssim \mu^\theta Z_I^{1-\theta} +Z_I. \label{homogeneous term estimate}
\end{align}
Collecting $(\ref{commutator estimate 1 sub 1}), (\ref{first factor estimate}), (\ref{nonhomogeneous term estimate}), (\ref{homogeneous term})$ and $(\ref{homogeneous term estimate})$, we show the first estimate $(\ref{commutator estimate 1})$. \newline
\indent We now prove $(\ref{commutator estimate 2})$. By triangle inequality,
\[
\|I\nabla F(u)\|_{L^2_t L^{\frac{2d}{d+2}}_x} \leq \|(I\nabla u)F'(u)\|_{L^2_t L^{\frac{2d}{d+2}}_x} + \|I\nabla F(u)-(I\nabla u)F'(u)\|_{L^2_t L^{\frac{2d}{d+2}}_x}.
\]
We have from H\"older's inequality, $(\ref{first factor estimate})$ and $(\ref{u norm estimate})$ that
\begin{align}
\|(I\nabla u) F'(u)\|_{L^2_t L^{\frac{2d}{d+2}}_x} &\lesssim \|I \nabla u\|_{L^{2+\varepsilon}_t L^{\frac{2d(2+\varepsilon)}{(d-2)(2+\varepsilon)-8}}_x} \|F'(u)\|_{L^{\frac{2(2+\varepsilon)}{\varepsilon}}_tL^{\frac{d(2+\varepsilon)}{2\varepsilon+8}}_x} \nonumber \\
&\lesssim \|\Delta I u\|_{L^{2+\varepsilon}_t L^{\frac{2d(2+\varepsilon)}{d(2+\varepsilon)-8}}_x} \|u\|^{\nu-1}_{L^{\frac{2(\nu-1)(2+\varepsilon)}{\varepsilon}}_t L^{\frac{d(\nu-1)(2+\varepsilon)}{2\varepsilon+8}}_x} \nonumber \\
&\lesssim Z_I (\mu^\theta Z_I^{1-\theta}+ N^{\Gact-2} Z_I)^{\nu-1} \nonumber \\
&\lesssim \mu^{\theta(\nu-1)} Z_I^{1+(1-\theta)(\nu-1)} + N^{(\Gact-2)(\nu-1)} Z_I^\nu. \label{first term triangle estimate}
\end{align}
The estimate $(\ref{commutator estimate 2})$ follows easily from $(\ref{commutator estimate 1})$ and $(\ref{first term triangle estimate})$. Note that by our assumptions, $\alpha=2-\gamma+\delta<\gamma-\Gact<2-\Gact$. The proof is complete.
\end{proof}
\begin{rem} \label{rem commutator estimate}
The estimates $(\ref{commutator estimate 1})$ and $(\ref{commutator estimate 2})$ still hold for $\nu-1=\frac{d-5+4\sigma}{2\sigma}$. Indeed, the proof of Lemma $\ref{lem commutator estimate}$ is valid for $\varepsilon=\infty$. 
\end{rem}
We are now able to prove the almost conservation law for the modified energy functional $E(Iu)$, where
\[
E(Iu(t))= \frac{1}{2}\|Iu(t)\|^2_{\dot{H}^2_x} + \frac{1}{\nu+1}\|Iu(t)\|^{\nu+1}_{L^{\nu+1}_x}.
\]
\begin{prop}\label{prop almost conservation law}
Let $5\leq d \leq 11$,
\[ 
\max \left\{\frac{3}{2}+\frac{\Gace}{4}, 4-\nu, \frac{2}{\nu-1}+\frac{(\nu-2)\Gace}{\nu-1} \right\}<\gamma<2,
\] 
$0<\delta<\min \{2\gamma-3-\frac{\Gace}{2}, \gamma+\nu-4, (\nu-1)\gamma-2-(\nu-2)\Gace\}, 0<\sigma \leq \gamma$ and 
\begin{align}
\max\left\{\frac{8(d-5+4\sigma)}{d(d-5+2\sigma)+8\sigma}, 
1 \right\} < \nu-1<\min\left\{\frac{d-5+4\sigma}{2\sigma},\frac{8}{d-2\gamma}\right\}. \nonumber
\end{align}  
Assume that $u \in L^\infty([0,T],\mathscr{S}(\R^d))$ is a solution to \emph{(NL4S)} on a time interval $[0,T]$, and satisfies $\|I u_0\|_{H^2_x} \leq 1$. Assume in addition that $u$ satisfies the a priori bound 
\[
\|u\|_{M^\sigma([0,T])} \leq \mu,
\]
for some small constant $\mu>0$. Then, for $N$ sufficiently large,
\begin{align}
\sup_{t\in[0,T]} |E(Iu(t))-E(Iu_0)| \lesssim N^{-(2-\gamma+\delta)}. \label{almost conservation law}
\end{align}
Here the implicit constant depends only on the size of $E(Iu_0)$. 
\end{prop}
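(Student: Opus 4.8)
The plan is to run the $I$-method energy-increment argument in the spirit of \cite{VisanZhang}, adapted to the biharmonic operator. Write $F(z)=|z|^{\nu-1}z$ and $G:=IF(u)-F(Iu)$; applying $I$ to (NL4S) shows that $Iu$ solves the same equation with the extra forcing $G$, i.e. $i\partial_t Iu+\Delta^2 Iu=-F(Iu)+G$. By the fundamental theorem of calculus, $(\ref{almost conservation law})$ reduces to the bound $\int_0^T|\partial_t E(Iu(s))|\,ds\lesssim N^{-(2-\gamma+\delta)}$. Differentiating $E(Iu)=\frac12\|\Delta Iu\|_{L^2_x}^2+\frac1{\nu+1}\|Iu\|_{L^{\nu+1}_x}^{\nu+1}$ in time, inserting the equation for $Iu$, and integrating by parts in the kinetic term (the purely real terms being annihilated by $\mathrm{Im}$), one is led to a pointwise-in-time identity of the form
\[
\partial_t E(Iu)=\pm\,\mathrm{Im}\!\int_{\R^d}\!\overline{\nabla\Delta Iu}\cdot\nabla G\,dx\ \pm\ \mathrm{Im}\!\int_{\R^d}\!\overline{G}\,F(Iu)\,dx .
\]
Thus everything is reduced to $L^2_t L_x^{2d/(d+2)}$-type estimates on $G$ and $\nabla G$, which is exactly the shape in which Lemma~\ref{lem commutator estimate} and Corollary~\ref{coro product rule} are phrased.

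Before estimating the increment I would first establish the ``good local bound'' $Z_I([0,T])\lesssim 1$, valid for $N$ large and $\mu$ small. This comes from a continuity/bootstrap argument: applying the Strichartz estimates $(\ref{strichartz estimate biharmonic})$ and $(\ref{strichartz estimate biharmonic 4order})$ to the Duhamel representation of $Iu$, and using $\langle\Delta\rangle\sim\langle\nabla\rangle^2$, one obtains
\[
Z_I(J)\lesssim \|Iu_0\|_{H^2_x}+\|\nabla IF(u)\|_{L^2_t(J,L_x^{2d/(d+2)})}+\|IF(u)\|_{L^{a'}_t(J,L^{b'}_x)}
\]
for a suitable biharmonic pair $(a,b)$; by $(\ref{commutator estimate 2})$ (and a cruder bound for the zeroth-order term) the right side is $\lesssim 1+N^{-(2-\gamma+\delta)}Z_I^\nu(J)+\mu^{(\nu-1)\theta}Z_I^{1+(\nu-1)(1-\theta)}(J)$, and since $Z_I$ depends continuously on the length of $J$ and starts comfortably below the target bound, this forces $Z_I([0,T])\lesssim 1$ once $N\gg1$ and $\mu\ll1$. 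It is in this step that the restrictions $5\le d\le11$, the lower bounds on $\gamma$, and the admissibility conditions on $(d,\nu,\sigma)$ enter: they are precisely what is needed for the fractional chain rules (Lemmas~\ref{lem fractional chain rule C1} and~\ref{lem fractional chain rule holder continuous}), used inside Lemma~\ref{lem commutator estimate}, to run with the gain $N^{-(2-\gamma+\delta)}$ --- morally, one must be able to place $2-\gamma+\delta<\gamma-\Gact$ derivatives on $F'(u)$, which for a non-algebraic $F$ forces the nonlinearity to carry ``two orders of derivatives''.

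With $Z_I([0,T])\lesssim 1$ in hand, the two terms of the identity are estimated as follows. For the main term, Hölder in space and then in time gives
\[
\Big|\int_0^T\!\!\int_{\R^d}\overline{\nabla\Delta Iu}\cdot\nabla G\Big|\le\|\nabla\Delta Iu\|_{L^2_t L^{2d/(d-2)}_x}\,\|\nabla G\|_{L^2_t L^{2d/(d+2)}_x}.
\]
The first factor is $\lesssim1$: applying $(\ref{generalized strichartz estimate})$ with three derivatives to the Duhamel formula along the sharp admissible exponents $(2,2d/(d-2))$ reduces it to $\|Iu_0\|_{\dot{H}^2_x}+\|\nabla IF(u)\|_{L^2_t L^{2d/(d+2)}_x}$, which is $\lesssim1$ by $(\ref{commutator estimate 2})$ and $Z_I\lesssim1$. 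For the second factor I would split, using $I\nabla=\nabla I$ and the chain rule $(\ref{chain rule})$,
\[
\nabla G=\big(\nabla IF(u)-(I\nabla u)F'(u)\big)+(\nabla Iu)\big(F'(u)-F'(Iu)\big).
\]
Here the first summand is $\lesssim N^{-(2-\gamma+\delta)}$ directly by $(\ref{commutator estimate 1})$ together with $Z_I\lesssim1$, while for the second one exploits that $u-Iu=(1-m_N(\nabla))u$ is a high-frequency object, so by $(\ref{property 2})$ every factor of $u-Iu$ --- after using the modulus of continuity of $F'$ (Lipschitz when $\nu\ge3$, $(\nu-2)$-Hölder when $2<\nu<3$) --- carries a gain $N^{-2\min\{1,\nu-2\}}$; combining this with Hölder, the fractional chain rule and the Sobolev embeddings exactly as in the proof of Lemma~\ref{lem commutator estimate}, and using the constraints on $\delta$ from the hypothesis (which guarantee $2\min\{1,\nu-2\}\ge 2-\gamma+\delta$), this too is $\lesssim N^{-(2-\gamma+\delta)}$. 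The potential-energy term $\mathrm{Im}\int\overline{G}\,F(Iu)$ is treated in the same spirit: since $\mathrm{Im}\int\overline{F(Iu)}F(Iu)=0$ one may expose the commutator structure of $G$ (via Corollary~\ref{coro product rule}, or the splitting above without the gradient), pair it with $\|F(Iu)\|$ in a biharmonic-admissible space controlled by $Z_I$, and pick up at least --- in fact more than --- $N^{-(2-\gamma+\delta)}$. Summing the two contributions yields $\int_0^T|\partial_t E(Iu(s))|\,ds\lesssim N^{-(2-\gamma+\delta)}$, which is $(\ref{almost conservation law})$; since $\|Iu_0\|_{H^2_x}\le1$ controls $E(Iu_0)$ (using $\|Iu_0\|_{L^{\nu+1}_x}\lesssim\|Iu_0\|_{H^2_x}$, valid as $\nu$ is energy-subcritical), all implicit constants depend only on the size of $E(Iu_0)$.

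The main obstacle is the ``good local bound'' $Z_I\lesssim1$ in the non-algebraic setting: one cannot expand the commutator $IF(u)-F(Iu)$ on the Fourier side and carry out multilinear frequency analysis, so the entire argument must be routed through the gain-of-derivative Strichartz estimate $(\ref{strichartz estimate biharmonic 4order})$ and the fractional chain rules, and making the resulting chain of Hölder/Sobolev estimates close --- with all intermediate Lebesgue exponents genuinely admissible and the exponent $\theta$ of Lemma~\ref{lem commutator estimate} lying in $(0,1)$ --- is what pins down $5\le d\le11$ together with the hypotheses on $\gamma,\delta,\sigma,\nu$. A secondary but unavoidable nuisance is the high-frequency piece $(\nabla Iu)(F'(u)-F'(Iu))$ when $2<\nu<3$, where $F'$ is only Hölder rather than Lipschitz, so one must track the Hölder exponent carefully to still beat $N^{-(2-\gamma+\delta)}$.
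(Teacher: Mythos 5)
Your proposal is correct in outline and its first half (the continuity argument giving $Z_I([0,T])\lesssim 1$ from $(\ref{strichartz estimate biharmonic})$, $(\ref{strichartz estimate biharmonic 4order})$ and $(\ref{commutator estimate 2})$) is exactly the paper's first step. Where you genuinely diverge is in the treatment of the kinetic part of the increment: you integrate by parts once, pairing $\overline{\nabla\Delta Iu}$ with $\nabla G$, whereas the paper writes $\im\int\overline{\Delta Iu}\,\Delta(F(Iu)-IF(u))$ and expands the inner Laplacian into the six terms $(\ref{almost conservation estimate 1})$--$(\ref{almost conservation estimate 5})$, several of which involve $F''$ and therefore require the case distinction $0<\nu-2<1$ versus $\nu-2\geq 1$ and the H\"older fractional chain rule (Lemma $\ref{lem fractional chain rule holder continuous}$). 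Your route reduces everything to the two first-order pieces $\nabla IF(u)-(I\nabla u)F'(u)$ (handled by $(\ref{commutator estimate 1})$) and $(\nabla Iu)(F'(u)-F'(Iu))$ (handled exactly as the paper's $(\ref{almost conservation estimate 6 sub 3})$); since $\nu-1>1$ forces $\nu>2$, the map $F'$ is $C^1$ with $|F'(z)-F'(w)|\lesssim|z-w|(|z|+|w|)^{\nu-2}$, so the H\"older-modulus worry you raise for $2<\nu<3$ is actually moot --- that issue only arises for $F''$, which your decomposition never differentiates. In fact the paper's own bound for the potential term, $(\ref{almost conservation estimate 6 sub 1})$--$(\ref{almost conservation estimate 6 sub 4})$, already reduces to $\|\nabla(F(Iu)-IF(u))\|_{L^2_tL^{2d/(d+2)}_x}$, the very quantity your kinetic term needs, so your argument is a genuine unification and simplification. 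The price you pay, and correctly identify, is the non-biharmonic norm $\|\nabla\Delta Iu\|_{L^2_tL^{2d/(d-2)}_x}$, which is not part of $Z_I$ but does follow from $(\ref{generalized strichartz estimate})$ with the sharp admissible pair $(2,2d/(d-2))$ together with $(\ref{commutator estimate 2})$. Two small corrections: the gain carried by a high-frequency factor $u-Iu$ in these Lebesgue norms is $N^{\Gact-2}$, not $N^{-2\min\{1,\nu-2\}}$, because the Sobolev embeddings used to reach biharmonic-admissible exponents cost $\Gact$ derivatives before $(\ref{property 2})$ can be applied; the smallness you must therefore check is $\delta<\gamma-\Gact$ (and, for the terms with $\min\{1,\nu-2\}$ powers, $2-\gamma+\delta<\min\{1,\nu-2\}(2-\Gact)$), which the stated hypotheses on $\delta$ do guarantee, exactly as verified at the end of the paper's proof. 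The description of the potential term (``pair with $\|F(Iu)\|$ in a biharmonic-admissible space'') should be made precise via the duality $\langle f,g\rangle=\langle|\nabla|^{-1}f,|\nabla|g\rangle$ and the double Sobolev embedding, as in $(\ref{almost conservation estimate 6 sub 1})$, but this is a routine fix.
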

\begin{rem} \label{rem almost conservation law}
As in Remark $\ref{rem commutator estimate}$, the estimate $(\ref{almost conservation law})$ is still valid for $\nu-1=\frac{d-5+4\sigma}{2\sigma}$.
\end{rem}
\noindent \textit{Proof of \emph{Proposition} $\ref{prop almost conservation law}$.} We firstly note that our assumptions on $\gamma$ and $\delta$ satisfy the assumptions given in Lemma $\ref{lem commutator estimate}$. It allows us to use the estimates given in Lemma $\ref{lem commutator estimate}$. \newline
\indent We begin by controlling the size of $Z_I$. By applying $I, \Delta I$ to (NL4S), and using Strichartz estimates $(\ref{strichartz estimate biharmonic}), (\ref{strichartz estimate biharmonic 4order})$, we get
\begin{align}
Z_I \lesssim \|I u_0\|_{H^2_x} + \|IF(u)\|_{L^2_t L^{\frac{2d}{d+4}}_x}
 + \|\nabla I F(u)\|_{L^2_t L^{\frac{2d}{d+2}}_x}. \label{local estimate 1}
\end{align}
Using $(\ref{commutator estimate 2})$, we have
\begin{align}
\|\nabla I F(u)\|_{L^2_t L^{\frac{2d}{d+2}}_x} \lesssim N^{-(2-\gamma+\delta)}Z_I^{\nu} + \mu^{(\nu-1)\theta}Z_I^{1+(\nu-1)(1-\theta)}. \label{local estimate 2}
\end{align}
Next, we drop the $I$-operator and use H\"older's inequality together with $(\ref{u norm estimate})$ to estimate
\begin{align*}
\|IF(u)\|_{L^2_t L^{\frac{2d}{d+4}}_x} &\lesssim \||u|^{\nu-1}\|_{L^{\frac{2(2+\varepsilon)}{\varepsilon}}_t L^{\frac{d(2+\varepsilon)}{2\varepsilon+8}}_x} \|u\|_{L^{2+\varepsilon}_t L^{\frac{2d(2+\varepsilon)}{d(2+\varepsilon)-8}}_x} \\
&\lesssim \|u\|^{\nu-1}_{L^{\frac{2(\nu-1)(2+\varepsilon)}{\varepsilon}}_t L^{\frac{d(\nu-1)(2+\varepsilon)}{2\varepsilon+8}}_x} \|u\|_{L^{2+\varepsilon}_t L^{\frac{2d(2+\varepsilon)}{d(2+\varepsilon)-8}}_x} \\
&\lesssim Z_I(\mu^\theta Z_I^{1-\theta} + N^{\Gact-2} Z_I)^{\nu-1} \\
&\lesssim \mu^{(\nu-1)\theta} Z_I^{1+(\nu-1)(1-\theta)} + N^{(\Gact-2)(\nu-1)} Z_I^\nu.
\end{align*}
Here $\Big(2+\varepsilon, \frac{2d(2+\varepsilon)}{d(2+\varepsilon)-8} \Big)$ is biharmonic admissible. We thus get
\[
Z_I \lesssim \|Iu_0\|_{H^2_x} + N^{-(2-\gamma+\delta)}Z_I^{\nu} + \mu^{(\nu-1)\theta}Z_I^{1+(\nu-1)(1-\theta)}.
\]
By taking $\mu$ sufficiently small and $N$ sufficiently large and using the assumption $\|Iu_0\|_{H^2_x} \leq 1$, the continuity argument gives
\begin{align}
Z_I \lesssim \|Iu_0\|_{H^2_x}\leq 1. \label{control Z_I norm}
\end{align}
\indent Now, let $F(u)=|u|^{\nu-1}u$. A direct computation shows
\[
\partial_t E(Iu(t)) = \re{ \int \overline{I\partial_t u} (\Delta^2 Iu + F(Iu))  dx}.
\]
By the Fundamental Theorem of Calculus, 
\[
E(Iu(t))-E(Iu_0) = \int_0^t \partial_s E(Iu(s)) ds  = \re{ \int_0^t \int \overline{I \partial_s u} (\Delta^2 Iu + F(Iu)) dx ds}.  
\]
Using $I\partial_t u = i\Delta^2 Iu + i IF(u)$, we see that
\begin{align}
E(Iu(t))-E(Iu_0) &= \re{ \int_0^t \int \overline{I \partial_s u} (F(Iu)-IF(u)) dx ds} \nonumber \\
&= \im{ \int_0^t \int \overline{\Delta^2 Iu + IF(u)} (F(Iu)-IF(u)) dx ds} \nonumber \\
&= \im{ \int_0^t \int \overline{\Delta Iu} \Delta(F(Iu)-IF(u)) dx ds } \nonumber\\
&\mathrel{\phantom{=}} + \im{\int_0^t \int \overline{IF(u)} (F(Iu)-IF(u)) dx ds} \nonumber 
\end{align}
We next write 
\begin{align*}
\Delta(F(Iu)- IF(u)) &= (\Delta I u) F'(Iu) + |\nabla I u|^2F''(Iu) - I(\Delta F'(u)) - I(|\nabla u|^2 F''(u)) \\
&= (\Delta I u) (F'(Iu)-F'(u)) + |\nabla I u|^2(F''(Iu)-F''(u))  + \nabla Iu \cdot (\nabla I u - \nabla u) F''(u)  \\
& \mathrel{\phantom{=}} + (\Delta I u) F'(u) -I(\Delta u F'(u)) + (I\nabla u)\cdot \nabla u F''(u) - I(\nabla u \cdot \nabla u F''(u)).
\end{align*}
Therefore,
\begin{align}
E(Iu(t))-E(Iu_0) &= \im{ \int_0^t \int \overline{\Delta Iu} \Delta I u (F'(Iu)-F'(u)) dx ds} \label{almost conservation estimate 1} \\
& \mathrel{\phantom{=}} + \im{ \int_0^t \int \overline{\Delta Iu} |\nabla I u|^2(F''(Iu)-F''(u)) dx ds} \label{almost conservation estimate 2} \\
& \mathrel{\phantom{=}} + \im{ \int_0^t \int \overline{\Delta Iu} \nabla Iu \cdot (\nabla I u - \nabla u) F''(u) dx ds} \label{almost conservation estimate 3} \\
& \mathrel{\phantom{=}} + \im{ \int_0^t \int \overline{\Delta Iu} [(\Delta I u) F'(u) -I(\Delta u F'(u))] dx ds} \label{almost conservation estimate 4} \\
& \mathrel{\phantom{=}} + \im{ \int_0^t \int \overline{\Delta Iu} [(I\nabla u)\cdot \nabla u F''(u) - I(\nabla u \cdot \nabla u F''(u))]dx ds} \label{almost conservation estimate 5} \\
&\mathrel{\phantom{=}} + \im{\int_0^t \int \overline{IF(u)} (F(Iu)-IF(u)) dx ds} \label{almost conservation estimate 6}.
\end{align}
Let us consider $(\ref{almost conservation estimate 1})$. By H\"older's inequality, we estimate
\begin{align}
|(\ref{almost conservation estimate 1})| &\lesssim \|\Delta I u\|_{L^2_t L^{\frac{2d}{d-4}}_x} \|\Delta I u\|_{L^{2+\varepsilon}_t L^{\frac{2d(2+\varepsilon)}{d(2+\varepsilon) -8}}_x} \|F'(Iu)-F'(u)\|_{L^{\frac{2(2+\varepsilon)}{\varepsilon}}_t L^{\frac{d(2+\varepsilon)}{2\varepsilon+8}}_x} \nonumber \\
& \lesssim Z_I^2 \| |Iu-u|(|Iu|+|u|)^{\nu-2} \|_{L^{\frac{2(2+\varepsilon)}{\varepsilon}}_t L^{\frac{d(2+\varepsilon)}{2\varepsilon+8}}_x} \nonumber \\
& \lesssim Z_I^2 \|P_{>N} u\|_{L^{\frac{2(\nu-1)(2+\varepsilon)}{\varepsilon}}_t L^{\frac{d(\nu-1)(2+\varepsilon)}{2\varepsilon+8}}_x } \|u\|^{\nu-2}_{L^{\frac{2(\nu-1)(2+\varepsilon)}{\varepsilon}}_t L^{\frac{d(\nu-1)(2+\varepsilon)}{2\varepsilon+8}}_x}. \label{almost conservation estimate 1 sub 1}
\end{align}
Combining $(\ref{almost conservation estimate 1 sub 1}), (\ref{high frequency estimate})$ and $(\ref{u norm estimate})$, we get
\begin{align}
|(\ref{almost conservation estimate 1})| \lesssim N^{\Gact-2} Z_I^3(\mu^\theta Z_I^{1-\theta} + Z_I)^{\nu-2}. \label{almost conservation estimate 1 final}
\end{align}
In order to treat $(\ref{almost conservation estimate 2})$, we need to separate two cases $0<\nu-2<1$ and $1\leq \nu-2$. \newline
\indent If $0<\nu-2<1$, then using $F''(z)=O(|z|^{\nu-2})$, we have
\[
|F''(z)-F''(\zeta)| \lesssim |z-\zeta|^{\nu-2}, \quad \forall z,\zeta \in \C. 
\]
Moreover, there exists $k\gg 1$ so that $k(\nu-2)\geq 2$. 
By H\"older's inequality, 
\begin{align}
|(\ref{almost conservation estimate 2})| &\lesssim \|\Delta I u\|_{L^2_t L^{\frac{2d}{d-4}}_x} \|\nabla I u\|^2_{L^{\frac{4k}{k-2}}_t L^{q}_x} \|F''(Iu)-F''(u)\|_{L^{k}_t L^{r}_x} \nonumber \\
& \lesssim \|\Delta I u\|_{L^2_t L^{\frac{2d}{d-4}}_x} \||\nabla|^{1+\frac{\Gact}{2}} I u\|^2_{L^{\frac{4k}{k-2}}_t L^{q^\star}_x} \||Iu-u|^{\nu-2} \|_{L^{k}_t L^{r}_x} \nonumber \\
& \lesssim Z_I^3 \|P_{>N} u\|^{\nu-2}_{L^{k(\nu-2)}_t L^{r(\nu-2)}_x } \nonumber \\
&\lesssim Z_I^3 \||\nabla|^{\Gact} P_{>N} u\|^{\nu-2}_{L^{k(\nu-2)}_t L^{r^\star}_x} \nonumber\\
&\lesssim N^{(\Gact-2)(\nu-2)} Z_I^{\nu+1}, \nonumber
\end{align}
where 
\begin{align}
\begin{array}{rlrl}
q&:=\frac{4kd(\nu-1)}{(kd-4(k-2))(\nu-1)+8k}, &q^\star &:=\frac{2kd}{kd-2(k-2)}, \\
r&:=\frac{kd(\nu-1)}{4(k-1)(\nu-1)-4k}, & r^\star &:=\frac{2kd}{kd(\nu-2)-8}. 
\end{array}
\label{define q q_star r r_star}
\end{align}
Here we drop the $I$-operator and use $(\ref{property 3})$ with the fact $1+\frac{\Gact}{2}<\gamma<2$ to have the third line. Note that $\Big(\frac{4k}{k-2}, q^\star \Big)$ and $\Big(k(\nu-2), r^\star \Big)$ are biharmonic admissible. The last line follows from $(\ref{property 2})$.\newline
\indent If $1\leq \nu-2$, then 
\[
|F''(z)-F''(\zeta)| \lesssim |z-\zeta|(|z|+|\zeta|)^{\nu-3}, \quad \forall z, \zeta \in \C.
\]
We estimate
\begin{align}
|(\ref{almost conservation estimate 2})| &\lesssim \|\Delta I u\|_{L^2_t L^{\frac{2d}{d-4}}_x} \|\nabla I u\|^2_{L^{\frac{4k}{k-2}}_t L^{q}_x} \|F''(Iu)-F''(u)\|_{L^{k}_t L^{r}_x} \nonumber \\
& \lesssim \|\Delta I u\|_{L^2_t L^{\frac{2d}{d-4}}_x} \||\nabla|^{1+\frac{\Gact}{2}} I u\|^2_{L^{\frac{4k}{k-2}}_t L^{q^\star}_x} \|F''(Iu)-F''(u)\|_{L^{k}_t L^{r}_x} \nonumber \\
& \lesssim Z_I^3 \|Iu-u\|_{L^{k(\nu-2)}_t L^{r(\nu-2)}_x} \||Iu|+|u|\|^{\nu-3}_{L^{k(\nu-2)}_t L^{r(\nu-2)}_x} \nonumber \\
&\lesssim Z_I^3 \||\nabla|^{\Gact} P_{>N} u\|_{L^{k(\nu-2)}_t L^{r^\star}_x} \||\nabla|^{\Gact}u\|^{\nu-3}_{L^{k(\nu-2)}_t L^{r^\star}_x} \nonumber\\
&\lesssim N^{\Gact-2}Z_I^{\nu+1}. \nonumber
\end{align}
Thus, collecting two cases, we obtain
\begin{align}
|(\ref{almost conservation estimate 2})| \lesssim N^{\min\{\nu-2, 1\}(\Gact-2)}Z_I^{\nu+1}. \label{almost conservation estimate 2 final}
\end{align}
We next estimate
\begin{align}
|(\ref{almost conservation estimate 3})| &\lesssim \|\Delta I u\|_{L^2_t L^{\frac{2d}{d-4}}_x} \|\nabla I u\|_{L^{\frac{4k}{k-2}}_t L^{q}_x} \|\nabla I u- \nabla u\|_{L^{\frac{4k}{k-2}}_t L^{q}_x}  \|F''(u)\|_{L^{k}_t L^{r}_x} \nonumber \\
&\lesssim \|\Delta I u\|_{L^2_t L^{\frac{2d}{d-4}}_x} \|\nabla I u\|_{L^{\frac{4k}{k-2}}_t L^{q}_x} \|\nabla P_{>N} u\|_{L^{\frac{4k}{k-2}}_t L^{q}_x}  \|u\|^{\nu-2}_{L^{k(\nu-2)}_t L^{r(\nu-2)}_x} \nonumber \\
&\lesssim \|\Delta I u\|_{L^2_t L^{\frac{2d}{d-4}}_x} \||\nabla|^{1+\frac{\Gact}{2}} I u\|_{L^{\frac{4k}{k-2}}_t L^{q^\star}_x} \||\nabla|^{1+\frac{\Gact}{2}} P_{>N} u\|_{L^{\frac{4k}{k-2}}_t L^{q^\star}_x}  \||\nabla|^{\Gact}u\|^{\nu-2}_{L^{k(\nu-2)}_t L^{r^\star}_x} \nonumber \\
&\lesssim N^{\frac{\Gact-2}{2}} Z_I^{\nu+1}. \label{almost conservation estimate 3 final}
\end{align}
We next consider the term $(\ref{almost conservation estimate 4})$. Using the notation given in Lemma $\ref{lem commutator estimate}$, we apply Corollary $\ref{coro product rule}$ with $q=\frac{2d}{d+4}, q_1= \frac{2d(2+\varepsilon)}{d(2+\varepsilon)-8}$ and $q_2=\frac{d(2+\varepsilon)}{2\varepsilon+8}$ to have
\[
\|(\Delta Iu) F'(u)-I(\Delta u F'(u))\|_{L^{\frac{2d}{d+4}}_x} \lesssim N^{-\alpha}\|\Delta I u\|_{L^{\frac{2d(2+\varepsilon)}{d(2+\varepsilon)-8}}_x} \|\scal{\nabla}^\alpha F'(u)\|_{L^{\frac{d(2+\varepsilon)}{2\varepsilon+8}}_x},
\]
where $\alpha=2-\gamma+\delta$. By H\"older's inequality,
\[
\|(\Delta Iu) F'(u)-I(\Delta u F'(u))\|_{L^2_t L^{\frac{2d}{d+4}}_x} \lesssim N^{-\alpha}\|\Delta I u\|_{L^{2+\varepsilon}_t L^{\frac{2d(2+\varepsilon)}{d(2+\varepsilon)-8}}_x} \|\scal{\nabla}^\alpha F'(u)\|_{L^{\frac{2(2+\varepsilon)}{\varepsilon}}_t L^{\frac{d(2+\varepsilon)}{2\varepsilon+8}}_x}.
\]
We have from $(\ref{commutator estimate}), (\ref{nonhomogeneous term estimate})$ and $(\ref{homogeneous term estimate})$ that
\[
\|\scal{\nabla}^\alpha F'(u)\|_{L^{\frac{2(2+\varepsilon)}{\varepsilon}}_t L^{\frac{d(2+\varepsilon)}{2\varepsilon+8}}_x} \lesssim (\mu^\theta Z_I^{1-\theta}+Z_I)^{\nu-1}.
\]
Thus
\[
\|(\Delta Iu) F'(u)-I(\Delta u F'(u))\|_{L^2_t L^{\frac{2d}{d+4}}_x} \lesssim N^{-\alpha}Z_I(\mu^\theta Z_I^{1-\theta}+Z_I)^{\nu-1},
\]
and
\begin{align}
|(\ref{almost conservation estimate 4})| \lesssim N^{-(2-\gamma+\delta)} Z_I^2(\mu^\theta Z_I^{1-\theta}+Z_I)^{\nu-1}. \label{almost conservation estimate 4 final}
\end{align}
Similarly, 
\[
|(\ref{almost conservation estimate 5})| \lesssim \|\Delta I u\|_{L^2_t L^{\frac{2d}{d-4}}_x} \|(I\nabla u) \cdot \nabla u F''(u) - I(\nabla u \cdot \nabla u F''(u))\|_{L^2_t L^{\frac{2d}{d+4}}_x}.
\]
We next apply Lemma $\ref{lem product rule}$ with $q=\frac{2d}{d+4}, q_1=\frac{4kd(\nu-1)}{(kd-4(k-2))(\nu-1)+8k}$ and $q_2=\frac{4kd(\nu-1)}{(kd+12k-8)(\nu-1)-8k}$ to have
\begin{multline*}
\|(I\nabla u) \cdot \nabla u F''(u) - I(\nabla u \cdot \nabla u F''(u))\|_{L^{\frac{2d}{d+4}}_x} \lesssim N^{-\alpha} \|I\nabla u\|_{L^{\frac{4kd(\nu-1)}{(kd-4(k-2))(\nu-1)+8k}}_x} \\
\times \|\scal{\nabla}^\alpha (\nabla u F''(u))\|_{L^{\frac{4kd(\nu-1)}{(kd+12k-8)(\nu-1)-8k}}_x}.
\end{multline*}
Using the notation $(\ref{define q q_star r r_star})$, the fractional chain rule implies
\[
\|\scal{\nabla}^\alpha (\nabla u F''(u))\|_{L^{\frac{4kd(\nu-1)}{(kd+12k-8)(\nu-1)-8k}}_x} \lesssim \|\scal{\nabla}^{1+\alpha} u\|_{L^q_x} \|F''(u)\|_{L^r_x} +\|\nabla u\|_{L^q_x} \|\scal{\nabla}^\alpha F''(u)\|_{L^r_x}.
\]
The H\"older inequality then gives
\begin{multline*}
\|(I\nabla u) \cdot \nabla u F''(u) - I(\nabla u \cdot \nabla u F''(u))\|_{L^2_t L^{\frac{2d}{d+4}}_x} \lesssim N^{-\alpha} \|I \nabla u\|_{L^{\frac{4k}{k-2}}_t L^q_x} \\
\times \Big( \|\scal{\nabla}^{1+\alpha} u \|_{L^{\frac{4k}{k-2}}_t L^q_x} \|F''(u)\|_{L^k_t L^r_x} + \|\nabla u\|_{L^{\frac{4k}{k-2}}_t L^q_x} \|\scal{\nabla}^\alpha F''(u)\|_{L^k_t L^r_x}\Big).
\end{multline*}
By the Sobolev embedding (dropping the $I$-operator if necessary) and $(\ref{property 3})$, we have 
\begin{align}
\|I \nabla u\|_{L^\frac{4k}{k-2}_t L^q_x}, \|\nabla u\|_{L^{\frac{4k}{k-2}}_t L^q_x} &\lesssim \||\nabla|^{1+\frac{\Gact}{2}} u\|_{L^{\frac{4k}{k-2}}_t L^{q^\star}_x} \lesssim Z_I, \label{estimate first derivative 1} \\
\|\scal{\nabla}^{1+\alpha} u \|_{L^{\frac{4k}{k-2}}_t L^q_x} &\lesssim \|\scal{\nabla}^{1+\alpha+\frac{\Gact}{2}} u\|_{L^{\frac{4k}{k-2}}_t L^{q^\star}_x} \lesssim Z_I. \label{estimate first derivative 2}
\end{align}
Note that by our assumptions on $\delta$, $1+\alpha+\frac{\Gact}{2}=3-\gamma+\delta+\frac{\Gact}{2}<\gamma$. We also have
\begin{align}
\|F''(u)\|_{L^k_t L^r_x} \lesssim \|u\|^{\nu-2}_{L^{k(\nu-2)}_t L^{r(\nu-2)}_x} \lesssim \||\nabla|^{\Gact} u\|^{\nu-2}_{L^{k(\nu-2)}_t L^{r^\star}_x} \lesssim Z_I^{\nu-2}. \label{estimate second derivative}
\end{align}
It remains to treat $\|\scal{\nabla}^\alpha F''(u)\|_{L^k_t L^r_x}$. 
Using $(\ref{estimate second derivative})$, we only need to bound $\||\nabla|^\alpha F''(u)\|_{L^k_t L^r_x}$. To do so, we separate two cases: $1\leq \nu-2$ and $0<\nu-2<1$. \newline
\indent If $1\leq \nu-2$, then we apply Lemma $\ref{lem fractional chain rule C1}$ for $q=r, q_1 = \frac{r(\nu-2)}{\nu-3}, q_2=r(\nu-2)$ and use H\"older's inequality to have
\begin{align*}
\||\nabla|^\alpha F''(u)\|_{L^k_t L^r_x} &\lesssim \|O(|u|^{\nu-3})\|_{L^{\frac{k(\nu-2)}{\nu-3}}_t L^{\frac{r(\nu-2)}{\nu-3}}_x} \||\nabla|^\alpha u\|_{L^{k(\nu-2)}_t L^{r(\nu-2)}_x} \\
&\lesssim \|u\|^{\nu-3}_{L^{k(\nu-2)}_t L^{r(\nu-2)}_x} \||\nabla|^\alpha u \|_{L^{k(\nu-2)}_t L^{r(\nu-2)}_x} \\
&\lesssim \||\nabla|^\Gact u\|^{\nu-3}_{L^{k(\nu-2)}_t L^{r^\star}_x} \||\nabla|^{\alpha+\Gact} u\|_{L^{k(\nu-2)}_t L^{r^\star}_x} \\
&\lesssim Z_I^{\nu-2}.
\end{align*}
Here by our assumptions, $\alpha +\Gact<\gamma$ which allows us to use $(\ref{property 3})$ to get the last estimate. \newline
\indent If $0<\nu-2<1$, then we use Lemma $\ref{lem fractional chain rule holder continuous}$ with $\beta=\nu-2, \alpha=2-\gamma+\delta, q=r$ and $q_1, q_2$ satisfying
\[
\Big(\nu-2-\frac{\alpha}{\rho}\Big)q_1=\frac{\alpha}{\rho}q_2 = r(\nu-2),
\]
and $\frac{\alpha}{\nu-2}<\rho<1$ to be chosen later. With these choices, we have
\[
\Big(1-\frac{\alpha}{\beta\rho}\Big)q_1=r>1.
\]
Then,
\[
\||\nabla|^\alpha F''(u)\|_{L^r_x} \lesssim \||u|^{\nu-2-\frac{\alpha}{\rho}} \|_{L^{q_1}_x} \||\nabla|^\rho u\|^{\frac{\alpha}{\rho}}_{L^{\frac{\alpha}{\rho}q_2}_x} \lesssim \|u\|^{\nu-2-\frac{\alpha}{\rho}}_{L^{\left(\nu-2-\frac{\alpha}{\rho}\right)q_1}_x} \||\nabla|^\rho u\|^{\frac{\alpha}{\rho}}_{L^{\frac{\alpha}{\rho}}_x}.
\]
By H\"older's inequality,
\begin{align*}
\||\nabla|^\alpha F''(u)\|_{L^k_t L^r_x} &\lesssim \|u\|^{\nu-2-\frac{\alpha}{\rho}}_{L^{\left(\nu-2-\frac{\alpha}{\rho}\right)p_1}_t L^{\left(\nu-2-\frac{\alpha}{\rho}\right)q_1}_x} \||\nabla|^\rho u\|^{\frac{\alpha}{\rho}}_{L^{\frac{\alpha}{\rho} p_2}_t L^{\frac{\alpha}{\rho}q_2}_x} \\
&\lesssim \|u\|^{\nu-2-\frac{\alpha}{\rho}}_{L^{k(\nu-2)}_t L^{r(\nu-2)}_x} \||\nabla|^\rho u\|_{L^{k(\nu-2)}_t L^{r(\nu-2)}_x},
\end{align*}
provided that
\[
\Big(\nu-2 -\frac{\alpha}{\rho}\Big) p_1 = \frac{\alpha}{\rho} p_2 = k(\nu-2).
\]
The Sobolev imbedding then gives
\[
\||\nabla|^\alpha F''(u)\|_{L^k_t L^r_x} \lesssim \||\nabla|^{\Gact}u\|^{\nu-2-\frac{\alpha}{\rho}}_{L^{k(\nu-2)}_t L^{r^\star}_x} \||\nabla|^{\rho+\Gact} u\|_{L^{k(\nu-2)}_t L^{r^\star}_x} \lesssim Z_I^{\nu-2}.
\]
Here we use $(\ref{property 3})$ together with $\rho+\Gact<\gamma$ to get the last estimate. Note that 
\[
\frac{\alpha}{\nu-2}+\Gact<\rho+\Gact. 
\] 
If we want $\rho+\Gact<\gamma$ for an appropriate value of $\rho$, we need $\frac{\alpha}{\nu-2}+\Gact<\gamma$. This implies $\gamma>\frac{2}{\nu-1}+\frac{\nu-2}{\nu-1}\Gact$ and $\delta <(\nu-1)\gamma-2-(\nu-2)\Gact$. Collecting 2 cases, we show 
\begin{align}
\||\nabla|^\alpha F''(u)\|_{L^k_t L^r_x} \lesssim Z_I^{\nu-2}. \label{estimate homogeneous second derivative}
\end{align}
By $(\ref{estimate first derivative 1}), (\ref{estimate first derivative 2}), (\ref{estimate second derivative})$ and $(\ref{estimate homogeneous second derivative})$, 
\[
\|(I\nabla u) \cdot \nabla u F''(u)-I(\nabla u \cdot \nabla u F''(u))\|_{L^2_t L^{\frac{2d}{d+4}}_x} \lesssim N^{-\alpha} Z_I^\nu.
\]
Thus,
\begin{align}
|(\ref{almost conservation estimate 5})| \lesssim N^{-(2-\gamma+\delta)} Z_I^{\nu+1}. \label{almost conservation estimate 5 final}
\end{align}
Finally, we consider $(\ref{almost conservation estimate 6})$. We bound
\begin{align}
|(\ref{almost conservation estimate 6})| &\lesssim \||\nabla|^{-1} IF(u)\|_{L^2_tL^{\frac{2d}{d-2}}_x} \|\nabla(F(Iu)-IF(u))\|_{L^2_t L^{\frac{2d}{d+2}}_x} \nonumber \\
&\lesssim \|\nabla I F(u)\|_{L^2_t L^{\frac{2d}{d+2}}_x} \|\nabla(F(Iu)-IF(u))\|_{L^2_t L^{\frac{2d}{d+2}}_x}.  \label{almost conservation estimate 6 sub 1}
\end{align}
By $(\ref{commutator estimate 2})$, 
\begin{align}
\|\nabla I F(u)\|_{L^2_t L^{\frac{2d}{d+2}}_x} \lesssim N^{-(2-\gamma+\delta)}Z_I^{\nu} + \mu^{(\nu-1)\theta}Z_I^{1+(\nu-1)(1-\theta)}. \label{almost conservation estimate 6 sub 2}
\end{align}
By the triangle inequality, 
\[
\|\nabla(F(Iu)-IF(u))\|_{L^2_t L^{\frac{2d}{d+2}}_x} \lesssim \|(\nabla I u) (F'(Iu)-F'(u))\|_{L^2_t L^{\frac{2d}{d+2}}_x} + \|(\nabla I u) F'(u)- \nabla I F(u)\|_{L^2_t L^{\frac{2d}{d+2}}_x}.
\]
We firstly use H\"older's inequality and  estimate as in $(\ref{almost conservation estimate 1 sub 1})$ to get
\begin{align}
\|(\nabla I u)(F'(Iu)-F'(u))\|_{L^2_t L^{\frac{2d}{d+2}}_x} &\lesssim \|\nabla I u\|_{L^{2+\varepsilon}_t L^{\frac{2d(2+\varepsilon)}{d(2+\varepsilon)-12}}_x} \|F'(Iu)-F'(u)\|_{L^{\frac{2(2+\varepsilon)}{\varepsilon}}_t L^{\frac{d(2+\varepsilon)}{2\varepsilon+8}}_x} \nonumber \\
&\lesssim \|\Delta I u\|_{L^{2+\varepsilon}_t L^{\frac{2d(2+\varepsilon)}{d(2+\varepsilon)-8}}_x} \|F'(Iu)-F'(u)\|_{L^{\frac{2(2+\varepsilon)}{\varepsilon}}_t L^{\frac{d(2+\varepsilon)}{2\varepsilon+8}}_x} \nonumber \\
&\lesssim N^{\Gact-2}Z_I^2(\mu^\theta Z_I^{1-\theta} +Z_I)^{\nu-2}. \label{almost conservation estimate 6 sub 3}
\end{align}
By $(\ref{commutator estimate 1})$, 
\begin{align}
\|(\nabla I u) F'(u)- \nabla I F(u)\|_{L^2_t L^{\frac{2d}{d+2}}_x} \lesssim N^{-(2-\gamma+\delta)} Z_I (\mu^\theta Z_I^{1-\theta} + Z_I)^{\nu-1}. \label{almost conservation estimate 6 sub 4}
\end{align}
Combining $(\ref{almost conservation estimate 6 sub 1})-(\ref{almost conservation estimate 6 sub 4})$, we get
\begin{align}
|(\ref{almost conservation estimate 6})| \lesssim N^{-(2-\gamma+\delta)}Z_I^2 (\mu^\theta Z_I^{1-\theta}+ Z_I)^{2(\nu-1)}. \label{almost conservation estimate 6 final}
\end{align}
Collecting $(\ref{almost conservation estimate 1 final}), (\ref{almost conservation estimate 2 final}), (\ref{almost conservation estimate 3 final}), (\ref{almost conservation estimate 4 final}), (\ref{almost conservation estimate 5 final}), (\ref{almost conservation estimate 6 final})$ and using $(\ref{control Z_I norm})$, we prove $(\ref{almost conservation law})$. Note that our assumptions on $\delta$ implies
\[
2-\gamma+\delta< \min\left\{\gamma-1-\frac{\Gact}{2}, \nu-2, (\nu-2)(\gamma-\Gact) \right\}< \min\left\{\frac{2-\Gact}{2}, (\nu-2)(2-\Gact)\right\}.
\]
The proof is complete.
\defendproof
\section{Global well-posedness and scattering} \label{section global well-posedness scattering}
\setcounter{equation}{0}
In this section, we shall give the proof of the global existence and scattering given in Theorem $\ref{theorem global existence scattering}$. 
\subsection*{Global well-posedness} 
By the density argument, the proof of global well-posedness will be reduced to the following.
\begin{prop} \label{prop uniform bound}
Let $5\leq d\leq 11$ and $\gamma(d,\nu)<\gamma<2$ with $\gamma(d,\nu)$ be as in $(\ref{define gamma d nu})$. Suppose that $u$ is a global solution to \emph{(NL4S)} with initial data $u_0 \in C^\infty_0(\R^d)$. Then,
\begin{align}
\|u\|_{M^\sigma(\R)} &\leq C(\|u_0\|_{H^\gamma_x}), \label{global bound morawetz norm} \\
\|u\|_{L^\infty_t(\R, H^\gamma_x)} &\leq C(\|u_0\|_{H^\gamma_x}), \label{global bound sobolev norm}
\end{align}
where $\|\cdot\|_{M^\sigma}$ is given in $(\ref{define M sigma})$. 
\end{prop}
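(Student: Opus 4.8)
The plan is to execute the scheme outlined after the statement of Theorem~\ref{theorem global existence scattering}: rescale so that the modified energy starts small, run a continuity/bootstrap argument on the interaction--Morawetz norm $\|u_\lambda\|_{M^\sigma}$, feed the resulting subinterval decomposition into the almost conservation law of Proposition~\ref{prop almost conservation law}, and check that the arithmetic closes precisely when $\gamma>\gamma(d,\nu)$.

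First I would pick a large parameter $N\gg1$ (to be fixed at the end in terms of $\|u_0\|_{H^\gamma_x}$ only) and rescale via (\ref{scaling definition}) with $\lambda\sim N^{(2-\gamma)/(\gamma-\Gact)}$ as in (\ref{lambda introduction}). Using $\|Iu_\lambda(0)\|_{\dot H^2_x}\lesssim N^{2-\gamma}\|u_\lambda(0)\|_{\dot H^\gamma_x}$ from (\ref{property 5}), the scaling identity $\|u_\lambda(0)\|_{\dot H^\gamma_x}=\lambda^{-(\gamma-\Gact)}\|u_0\|_{\dot H^\gamma_x}$, and a Littlewood--Paley estimate for the potential term, one arranges $E(Iu_\lambda(0))\le\tfrac14$ (after enlarging the implicit constant in the choice of $\lambda$); the mass $\|u_\lambda(0)\|_{L^2_x}^2=\lambda^{2\Gact}\|u_0\|_{L^2_x}^2$ is then a fixed constant and never enters the iteration since $E(\cdot)$ carries no mass term. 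This step is what forces the particular power of $N$ in (\ref{lambda introduction}).

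Next comes the bootstrap. From the interpolated interaction--Morawetz inequality (\ref{morawetz norm introduction}), Sobolev embedding, and the bound $\|Iu_\lambda(t)\|_{\dot H^2_x}^2\lesssim E(Iu_\lambda(t))$, one obtains an a priori inequality $\|u_\lambda\|_{M^\sigma([0,T])}\lesssim\lambda^{\Gact+\sigma(4-d)\Gact/(2(d-5+4\sigma))}\,(\sup_{[0,T]}E(Iu_\lambda))^{c}$ for a fixed exponent $c>0$. Assume for contradiction that $\|u_\lambda\|_{M^\sigma(\R)}>C\lambda^{\Gact+\sigma(4-d)\Gact/(2(d-5+4\sigma))}$; by continuity of $t\mapsto\|u_\lambda\|_{M^\sigma([0,t])}$ there is a first time $T$ at which (\ref{assumption 1 introduction})--(\ref{assumption 2 introduction}) hold. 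The upper bound (\ref{assumption 2 introduction}) lets me split $[0,T]$ into $L\sim\lambda^{\Gact(d-5+(8-d)\sigma)/\sigma}$ consecutive intervals $J_k$ on each of which $\|u_\lambda\|_{M^\sigma(J_k)}\le\mu$, the smallness threshold of Proposition~\ref{prop almost conservation law}. Iterating the almost conservation law across $k=1,\dots,L$ (with the hypothesis $E(Iu_\lambda)\le1$ propagated inductively) yields $\sup_{[0,T]}E(Iu_\lambda)\le E(Iu_\lambda(0))+C L N^{-(2-\gamma+\delta)}$. The decisive point is that, after inserting (\ref{lambda introduction}) and (\ref{L introduction}), the requirement $L N^{-(2-\gamma+\delta)}\ll1$ becomes $\Gact(2-\gamma)(d-5+(8-d)\sigma)<\delta(\gamma-\Gact)\sigma$; combining this with the admissible ranges of $\delta$ furnished by Proposition~\ref{prop almost conservation law} (which is exactly what makes $\gamma_1,\gamma_2,\gamma_3$ appear) and with (\ref{condition sigma_0}), and then optimizing over $\sigma\in(0,\sigma_0]$ (which produces $\gamma_4$), one sees that $\gamma>\gamma(d,\nu)$ is precisely what guarantees a valid choice of $(\delta,\sigma)$. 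Hence $E(Iu_\lambda(t))\le1$ on $[0,T]$, the a priori inequality then gives $\|u_\lambda\|_{M^\sigma([0,T])}\le K\lambda^{\Gact+\sigma(4-d)\Gact/(2(d-5+4\sigma))}$, and choosing $C=2K$ contradicts (\ref{assumption 1 introduction}). Therefore $E(Iu_\lambda(t))\le1$ and $\|u_\lambda\|_{M^\sigma(\R)}\lesssim\lambda^{\Gact+\sigma(4-d)\Gact/(2(d-5+4\sigma))}$ for all time.

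Finally, undoing the scaling in the Morawetz bound gives (\ref{global bound morawetz norm}), and from $E(Iu_\lambda(t))\le1$ together with (\ref{property 4}) and the (fixed) value of the mass one recovers (\ref{global bound sobolev norm}); since $N$ and $\lambda$ depend only on $\|u_0\|_{H^\gamma_x}$, all powers of $\lambda$ and $N$ that appear are fixed constants, so a standard density argument then extends the bounds to $u_0\in H^\gamma_x$. The main obstacle is not any single estimate but the bookkeeping in the bootstrap: one must track the exact $\lambda$-powers through the interpolated Morawetz inequality and through the count $L$ of subintervals, and then verify that the resulting inequality among $\Gact$, $\gamma$, $\delta$, $\sigma$ is simultaneously compatible with \emph{all} hypotheses of Proposition~\ref{prop almost conservation law}; it is precisely this compatibility check, optimized over $\sigma$, that yields the explicit threshold $\gamma(d,\nu)$ and is the crux of the proof.
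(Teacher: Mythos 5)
Your proposal follows the paper's proof essentially verbatim: the same rescaling $(\ref{lambda introduction})$ to make $E(Iu_\lambda(0))\leq\frac{1}{4}$ (including a frequency decomposition for the potential term), the same bootstrap on $\|u_\lambda\|_{M^\sigma}$ via the interpolated Morawetz inequality and low/high frequency interpolation against $\|Iu_\lambda\|_{\dot{H}^2_x}$, the same subdivision into $L$ intervals feeding the almost conservation law, and the same undoing of the scaling at the end. The one slip is arithmetic: since $L\sim N^{\Gact(2-\gamma)(d-5+(8-d)\sigma)/((\gamma-\Gact)\sigma)}$, the requirement $LN^{-(2-\gamma+\delta)}\ll 1$ reads $\Gact(2-\gamma)(d-5+(8-d)\sigma)<(2-\gamma+\delta)(\gamma-\Gact)\sigma$, not $<\delta(\gamma-\Gact)\sigma$ as you wrote; it is the factor $2-\gamma+\delta$, whose supremum over admissible $\delta$ equals $\min\{\gamma-1-\frac{\Gact}{2},\,\nu-2,\,(\nu-2)(\gamma-\Gact)\}$, that reproduces the exact threshold $\gamma(d,\nu,\sigma)$, whereas your version yields a strictly more restrictive condition.
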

\begin{proof}
The proof of this result is based on the almost conservation law given in Proposition $\ref{prop almost conservation law}$. To do so, we need the modified energy of initial data is small. Since $E(Iu_0)$ is not necessarily small, we use the scaling $(\ref{scaling definition})$ to make $E(Iu_\lambda(0))$ small. We have
\begin{align}
E(Iu_\lambda(0))=\frac{1}{2}\|Iu_\lambda (0)\|^2_{\dot{H}^2_x} +\frac{1}{\nu+1} \|Iu_\lambda (0)\|^{\nu+1}_{L^{\nu+1}_x}. \label{scaling modified energy}
\end{align}
By $(\ref{property 5})$,
\begin{align}
\|Iu_\lambda(0)\|_{\dot{H}^2_x} \lesssim N^{2-\gamma}\|u_\lambda(0)\|_{\dot{H}^\gamma_x}= N^{2-\gamma} \lambda^{\Gact-\gamma} \|u_0\|_{\dot{H}^\gamma_x}. \label{scaling homogeneous norm}
\end{align}
By choosing 
\begin{align}
\lambda \approx N^{\frac{2-\gamma}{\gamma-\Gact}}, \label{choice of lambda}
\end{align}
we have $\|Iu_\lambda(0)\|_{\dot{H}^2_x} \leq \frac{1}{8}$. We next bound $\|I u_\lambda(0)\|_{L^{\nu+1}_x}$. Note that we can easily estimate this norm by the Sobolev embedding,
\[
\|I u_\lambda(0)\|_{L^{\nu+1}_x} \lesssim \|u_\lambda(0)\|_{L^{\nu+1}_x} =\lambda^{\frac{d}{\nu+1}-\frac{4}{\nu-1}}\|u_0\|_{L^{\nu+1}_x} \lesssim \lambda^{\frac{(d-4)(\nu-1)-8}{(\nu+1)(\nu-1)}} \|u_0\|_{H^\gamma_x},
\] 
provided that $\gamma\geq \frac{d(\nu-1)}{2(\nu+1)}$. In order to remove this unexpected condition on $\gamma$, we use the technique of \cite{I-teamglobal} (see also \cite{MiaoWuZhang}). We firstly separate the frequency space into the domains 
\[
\Omega_1:=\Big\{\xi \in \R^d, \  |\xi| \lesssim \frac{1}{\lambda}\Big\}, \quad \Omega_2:=\Big\{ \xi \in \R^d, \ \frac{1}{\lambda} \lesssim |\xi| \lesssim N \Big\}, \quad \Omega_3:= \Big\{ \xi\in \R^d, \ |\xi| \gtrsim N \Big\},
\]
and then write
\[
[u_\lambda(0)]\widehat{\ }(\xi)=(\chi_1(\xi) + \chi_2(\xi) + \chi_3(\xi)) [u_\lambda(0)]\widehat{\ } (\xi),
\] 
for non-negative smooth functions $\chi_j$ supported in $\Omega_j, j=1,2,3$ respectively and satisfying $\sum \chi_j(\xi)=1$. Thus
\[
I u_\lambda(0) = \chi_1(D) I u_\lambda(0) + \chi_2(D) I u_\lambda(0) + \chi_3(D) I u_\lambda(0).
\]
We now use the Sobolev embedding to have
\[
\|\chi_1(D)I u_\lambda(0)\|_{L^{\nu+1}_x} \lesssim \||\nabla|^{\frac{d(\nu-1)}{2(\nu+1)}} \chi_1(D) I u_\lambda(0)\|_{L^2_x} \lesssim \||\nabla|^{\frac{d(\nu-1)}{2(\nu+1)}} \chi_1(D) I\|_{L^2_x \rightarrow L^2_x}\|u_\lambda(0)\|_{L^2_x}.
\]
Thanks to the support of $\chi_1$, the functional calculus gives 
\begin{align}
\||\nabla|^{\frac{d(\nu-1)}{2(\nu+1)}} \chi_1(D) I\|_{L^2_x \rightarrow L^2_x} \lesssim \||\xi|^{\frac{d(\nu-1)}{2(\nu+1)}-\alpha} |\xi|^\alpha \chi_1(\xi)\|_{L^\infty_\xi} \lesssim \lambda^{\alpha-\frac{d(\nu-1)}{2(\nu+1)}}, \label{norm estimate 1}
\end{align}
provided $0<\alpha <\frac{d(\nu-1)}{2(\nu+1)}$. Similarly,
\[
\|\chi_3(D) I u_\lambda(0)\|_{L^{\nu+1}_x} \lesssim \||\nabla|^{\frac{d(\nu-1)}{2(\nu+1)}} \chi_3(D) I u_\lambda(0)\|_{L^2_x} \lesssim \||\nabla|^{\frac{d(\nu-1)}{2(\nu+1)}-\gamma} \chi_3(D) I\|_{L^2_x \rightarrow L^2_x} \|u_\lambda(0)\|_{\dot{H}^\gamma_x}.
\]
A direct computation shows
\begin{align}
\|u_\lambda(0)\|_{\dot{H}^\gamma_x} = \lambda^{-\gamma}\|u_0\|_{\dot{H}^\gamma_x}. \label{H gamma norm}
\end{align}
Using the support of $\chi_3$, the functional calculus again gives
\begin{align}
\||\nabla|^{\frac{d(\nu-1)}{2(\nu+1)}-\gamma} \chi_3(D) I\|_{L^2_x \rightarrow L^2_x} \lesssim \||\xi|^{\frac{d(\nu-1)}{2(\nu+1)}-\gamma}\chi_3(\xi) (N|\xi|^{-1})^{2-\gamma}\|_{L^\infty_\xi} \lesssim N^{\frac{d(\nu-1)}{2(\nu+1)}-\gamma}. \label{estimate chi 3}
\end{align}
To obtain this bound, we split into two cases. \newline
\indent When $\frac{d(\nu-1)}{2(\nu+1)} \geq \gamma$, we simply bound
\[
\||\xi|^{\frac{d(\nu-1)}{2(\nu+1)}-\gamma}\chi_3(\xi) (N|\xi|^{-1})^{2-\gamma}\|_{L^\infty_\xi} \lesssim 1 \lesssim N^{\frac{d(\nu-1)}{2(\nu+1)}-\gamma}.
\]
\indent When $\gamma> \frac{d(\nu-1)}{2(\nu+1)}$, we write
\begin{align*}
\||\xi|^{\frac{d(\nu-1)}{2(\nu+1)}-\gamma}\chi_3(\xi) (N|\xi|^{-1})^{2-\gamma}\|_{L^\infty_\xi} &= N^{\frac{d(\nu-1)}{2(\nu+1)}-\gamma} \|(N|\xi|^{-1})^{\gamma-\frac{d(\nu-1)}{2(\nu+1)}} \chi_3(\xi) (N|\xi|^{-1})^{2-\gamma}\|_{L^\infty_\xi} \\
&	\lesssim N^{\frac{d(\nu-1)}{2(\nu+1)}-\gamma}.
\end{align*}
Combining $(\ref{H gamma norm})$ and $(\ref{estimate chi 3})$, we get
\begin{align}
\|\chi_3(D) I u_\lambda(0)\|_{L^{\nu+1}_x} \lesssim N^{\frac{d(\nu-1)}{2(\nu+1)}-\gamma}\lambda^{-\gamma}\|u_0\|_{\dot{H}^\gamma_x}. \label{norm estimate 2}
\end{align}
We treat the intermediate case as
\[
\|\chi_2(D) I u_\lambda(0)\|_{L^{\nu+1}_x} \lesssim \||\nabla|^{\frac{d(\nu-1)}{2(\nu+1)}-\gamma} \chi_2(D) I\|_{L^2_x \rightarrow L^2_x} \|u_\lambda(0)\|_{\dot{H}^\gamma_x}.
\] 
We have
\[
\||\nabla|^{\frac{d(\nu-1)}{2(\nu+1)}-\gamma} \chi_2(D) I\|_{L^2_x \rightarrow L^2_x} \lesssim \|\xi|^{\frac{d(\nu-1)}{2(\nu+1)}-\gamma} \chi_2(\xi)\|_{L^\infty_\xi}. 
\]
\indent When $\frac{d(\nu-1)}{2(\nu+1)}\geq \gamma$, we bound
\[
\|\xi|^{\frac{d(\nu-1)}{2(\nu+1)}-\gamma} \chi_2(\xi)\|_{L^\infty_\xi} \lesssim N^{\frac{d(\nu-1)}{2(\nu+1)}-\gamma}.
\]
\indent When $\gamma>\frac{d(\nu-1)}{2(\nu+1)}$, we write
\[
\|\xi|^{\frac{d(\nu-1)}{2(\nu+1)}-\gamma} \chi_2(\xi)\|_{L^\infty_\xi} = \|\xi|^{\frac{d(\nu-1)}{2(\nu+1)}-\gamma-\beta} |\xi|^\beta \chi_2(\xi)\|_{L^\infty_\xi} \lesssim \lambda^{\beta+\gamma-\frac{d(\nu-1)}{2(\nu+1)}},
\]
provided $\frac{d(\nu-1)}{2(\nu+1)}-\gamma <\beta <\frac{d(\nu-1)}{2(\nu+1)}$. These estimates together with $(\ref{H gamma norm})$ yield
\begin{align}
\|\chi_2(D) I u_\lambda(0)\|_{L^{\nu+1}_x} \lesssim \left\{ \begin{array}{l l}
N^{\frac{d(\nu-1)}{2(\nu+1)}-\gamma} \lambda^{-\gamma} \|u_0\|_{\dot{H}^\gamma_x} & \text{ if } \frac{d(\nu-1)}{2(\nu+1)} \geq \gamma, \\
\lambda^{\beta -\frac{d(\nu-1)}{2(\nu+1)}} \|u_0\|_{\dot{H}^\gamma_x} & \text{ if } \gamma>\frac{d(\nu-1)}{2(\nu+1)}.
\end{array} \right. \label{norm estimate 3}
\end{align}
Collecting $(\ref{norm estimate 1}), (\ref{norm estimate 2}), (\ref{norm estimate 3})$ and use $(\ref{choice of lambda})$, we obtain
\begin{align}
\|Iu_\lambda(0)\|_{L^{\nu+1}_x} \lesssim \Big(\lambda^{\alpha-\frac{d(\nu-1)}{2(\nu+1)}} + \lambda^{\beta -\frac{d(\nu-1)}{2(\nu+1)}} + \lambda^{\frac{((d-4)(\nu-1)-8)\gamma}{2(2-\gamma)(\nu+1)}}\Big) \|u_0\|_{H^\gamma_x}, \label{scaling inhomogeneous norm}
\end{align}
for some $0<\alpha<\frac{d(\nu-1)}{2(\nu+1)}$ and $\frac{d(\nu-1)}{2(\nu+1)}-\gamma <\beta<\frac{d(\nu-1)}{2(\nu+1)}$. Therefore, it follows from $(\ref{scaling homogeneous norm}), (\ref{choice of lambda})$ and $(\ref{scaling inhomogeneous norm})$ by taking $\lambda$ sufficiently large depending on $\|u_0\|_{H^\gamma_x}$ and $N$ (which will be chosen later and depend only on $\|u_0\|_{H^\gamma_x}$) that
\[
E(Iu_\lambda(0)) \leq \frac{1}{4}.
\] 
We now show that there exists an absolute constant $C$ such that
\begin{align}
\|u_\lambda\|_{M^\sigma(\R)} \leq C \lambda^{\Gact+\frac{\sigma(4-d)\Gact}{2(d-5+4\sigma)}}. \label{scaling morawetz norm bound}
\end{align}
By undoing the scaling, using the fact that
\[
\|u_\lambda\|_{M^\sigma(\R)} = \lambda^{\Gact+\frac{\sigma(4-d)}{d-5+4\sigma}} \|u\|_{M^\sigma(\R)},
\]
we get $(\ref{global bound morawetz norm})$. We shall use the bootstrap argument to show $(\ref{scaling morawetz norm bound})$. By time reversal symmetry, it suffices to treat the positive time only. To do so, we define
\[
\Omega_1:= \left\{ t \in [0,\infty) \ | \  \|u_\lambda\|_{M^\sigma([0,t])} \leq C \lambda^{\Gact+\frac{\sigma(4-d)\Gact}{2(d-5+4\sigma)}}  \right\}.
\]
We want to show $\Omega_1 =[0,\infty)$. Let
\[
\Omega_2:= \left\{ t \in [0,\infty) \ | \ \|u_\lambda\|_{M^\sigma([0,t])} \leq 2 C \lambda^{\Gact+\frac{\sigma(4-d)\Gact}{2(d-5+4\sigma)}}   \right\}.
\]
In order to run the bootstrap argument successfully, we need to verify four things:
\begin{itemize}
\item[1)] $\Omega_1 \ne \emptyset$. This is obvious as $0\in \Omega_1$. 
\item[2)] $\Omega_1$ is closed. This follows from Fatou's Lemma.
\item[3)] $\Omega_2 \subset \Omega_1$.
\item[4)] If $T\in \Omega_1$, then there exists $\delta>0$ such that $[T, T+\delta) \subset \Omega_2$. This is a consequence of the local well-posedness and 3). 
\end{itemize} 
It remains to prove 3). Fix $T\in \Omega_2$, we will show that $T\in \Omega_1$. We firstly use the interaction Morawetz inequality $(\ref{interaction morawetz interpolation})$ and the mass conservation to have
\begin{align}
\|u_\lambda\|_{M^\sigma([0,T])} &\lesssim \Big(\|u_\lambda(0)\|_{L^2_x} \|u_\lambda\|_{L^\infty_t([0,T],\dot{H}^{\frac{1}{2}}_x)}\Big)^{\frac{2\sigma}{d-5+4\sigma}} \|u_\lambda\|_{L^\infty_t([0,T],\dot{H}^\sigma_x)}^{\frac{d-5}{d-5+4\sigma}} \nonumber \\
&\lesssim C(\|u_0\|_{L^2_x}) \lambda^{\frac{2\sigma \Gact}{d-5+4\sigma}} \|u_\lambda\|^{\frac{2\sigma}{d-5+4\sigma}}_{L^\infty_t([0,T],\dot{H}^{\frac{1}{2}}_x)} \|u_\lambda\|^{\frac{d-5}{d-5+4\sigma}}_{L^\infty_t([0,T], \dot{H}^\sigma_x)}. \label{scaling morawetz estimate}
\end{align}
We now decompose 
\[
u_\lambda(t) := P_{\leq N} u_\lambda(t) + P_{>N} u_\lambda(t)
\]
to estimate the second and the third factor in the right hand side of $(\ref{scaling morawetz estimate})$. For the low frequency part, we interpolate between the $L^2_x$-norm and $\dot{H}^2_x$-norm to have
\begin{align}
\|P_{\leq N} u_\lambda(t)\|_{\dot{H}^{\frac{1}{2}}_x}  &\lesssim \|P_{\leq N}  u_\lambda (t)\|^{\frac{3}{4}}_{L^2_x} \|P_{\leq N} u_\lambda (t)\|^{\frac{1}{4}}_{\dot{H}^2_x} \lesssim C(\|u_0\|_{L^2_x})\lambda^{\frac{3\Gact}{4}} \|I u_\lambda(t)\|^{\frac{1}{4}}_{\dot{H}^2_x}, \label{low frequency estimate 1} \\
\|P_{\leq N} u_\lambda(t)\|_{\dot{H}^{\sigma}_x}  &\lesssim \|P_{\leq N}  u_\lambda (t)\|^{1-\frac{\sigma}{2}}_{L^2_x} \|P_{\leq N} u_\lambda (t)\|^{\frac{\sigma}{2}}_{\dot{H}^2_x} \lesssim C(\|u_0\|_{L^2_x}) \lambda^{\frac{\Gact(2-\sigma)}{2}} \|I u_\lambda(t)\|^{\frac{\sigma}{2}}_{\dot{H}^2_x}. \label{low frequency estimate 2}
\end{align}
Note that the $I$-operator is the identity on low frequency $|\xi|\leq N$. For high frequency part, we interpolate between the $L^2_x$-norm and $\dot{H}^\gamma_x$-norm and use $(\ref{property 2})$ to have
\begin{align}
\|P_{> N} u_\lambda(t)\|_{\dot{H}^{\frac{1}{2}}_x}  &\lesssim \|P_{> N}  u_\lambda (t)\|^{1-\frac{1}{2\gamma}}_{L^2_x} \|P_{> N} u_\lambda (t)\|^{\frac{1}{2\gamma}}_{\dot{H}^\gamma_x} \nonumber \\
&\lesssim C(\|u_0\|_{L^2_x}) \lambda^{\Gact\left(1-\frac{1}{2\gamma}\right)} N^{\frac{\gamma-2}{2\gamma}} \|I u_\lambda(t)\|^{\frac{1}{2\gamma}}_{\dot{H}^2_x} \nonumber \\
&\lesssim C(\|u_0\|_{L^2_x}) \lambda^{\frac{3\Gact}{4}} \|I u_\lambda(t)\|^{\frac{1}{2\gamma}}_{\dot{H}^2_x}, \label{high frequency estimate 1} \\
\|P_{> N} u_\lambda(t)\|_{\dot{H}^{\sigma}_x}  &\lesssim \|P_{> N}  u_\lambda (t)\|^{1-\frac{\sigma}{\gamma}}_{L^2_x} \|P_{> N} u_\lambda (t)\|^{\frac{\sigma}{\gamma}}_{\dot{H}^2_x} \nonumber \\
&\lesssim C(\|u_0\|_{L^2_x}) \lambda^{\Gact\left(1-\frac{\sigma}{\gamma}\right)} N^{\frac{\sigma(\gamma-2)}{\gamma}} \|Iu_\lambda(t)\|^{\frac{\sigma}{\gamma}}_{\dot{H}^2_x} \nonumber \\
&\lesssim C(\|u_0\|_{L^2_x})\lambda^{\frac{\Gact(2-\sigma)}{2}} \|I u_\lambda(t)\|^{\frac{\sigma}{\gamma}}_{\dot{H}^2_x}. \label{high frequency estimate 2}
\end{align}
Here we use the fact $0<\gamma<2$ to get $(\ref{high frequency estimate 1})$ and $(\ref{high frequency estimate 2})$. Collecting $(\ref{scaling morawetz estimate})$ through $(\ref{high frequency estimate 2})$, we get
\begin{multline}
\|u_\lambda\|_{M^\sigma([0,T])} \lesssim C(\|u_0\|_{L^2_x}) \lambda^{\Gact+\frac{\sigma(4-d)\Gact}{2(d-5+4\sigma)}} \sup_{[0,T]}\left(\|Iu_\lambda(t)\|^{\frac{1}{4}}_{\dot{H}^2_x} + \|Iu_\lambda(t)\|^{\frac{1}{2\gamma}}_{\dot{H}^2_x}\right)^{\frac{2\sigma}{d-5+4\sigma}} \\
\times \sup_{[0,T]} \left(\|Iu_\lambda(t)\|^{\frac{\sigma}{2}}_{\dot{H}^2_x} + \|Iu_\lambda(t)\|^{\frac{\sigma}{\gamma}}_{\dot{H}^2_x}\right)^{\frac{d-5}{d-5+4\sigma}}. \label{scaling morawetz estimate argument}
\end{multline}
Thus, by taking $C$ sufficiently large depending on $\|u_0\|_{L^2_x}$, we get $T\in \Omega_1$, provided that 
\begin{align}
\sup_{[0,T]} \|Iu_\lambda(t)\|_{\dot{H}^2_x} \lesssim 1. \label{small condition}
\end{align}
We will prove that $(\ref{small condition})$ holds for $T\in \Omega_2$. Indeed, let $\mu>0$ be a sufficiently small constant given in Proposition $\ref{prop almost conservation law}$. We divide $[0,T]$ into subintervals $J_k, k=1,..., L$ in such a way that
\[
\|u_\lambda\|_{M^\sigma(J_k)} \leq \mu.
\]
The number of possible subinterval must satisfy
\begin{align}
L \sim \Big(\frac{\lambda^{\Gact+\frac{\sigma(4-d)\Gact}{2(d-5+4\sigma)}}}{\mu}\Big)^{\frac{d-5+4\sigma}{\sigma}} \sim \lambda^{\frac{\Gact(d-5+(8-d)\sigma)}{\sigma}}. \label{choice of L}
\end{align}
We next apply Proposition $\ref{prop almost conservation law}$ on each of the subintervals $J_k$ to have
\[
\sup_{[0,T]} \|I u_\lambda(t)\|^2_{L^2_x} \lesssim \sup_{[0,T]} E(Iu_\lambda(t)) \leq E(Iu_\lambda(0)) + C(E(Iu_\lambda(0))) N^{-(2-\gamma+\delta)} L.
\]
Since $E(Iu_\lambda(0)) \leq \frac{1}{4}$, we need
\begin{align}
N^{-(2-\gamma+\delta)} L \ll \frac{1}{4} \label{small requirement}
\end{align}
in order to guarantee $(\ref{small condition})$ holds. Combining $(\ref{choice of lambda}), (\ref{choice of L})$ and $(\ref{small requirement})$, we need to choose $N$ depending on $\|u_0\|_{H^\gamma_x}$ such that
\[
N^{\frac{\Gact(2-\gamma)(d-5+(8-d)\sigma)}{(\gamma-\Gact)\sigma} -(2-\gamma+\delta)} \ll 1.
\]
This is possible whenever $\gamma$ is such that
\[
\frac{\Gact(2-\gamma)(d-5+(8-d)\sigma)}{(\gamma-\Gact)\sigma} < 2-\gamma+\delta,
\]
or
\begin{align}
\Gact(2-\gamma) (d-5+(8-d)\sigma) < (2-\gamma+\delta)(\gamma-\Gact)\sigma. \label{condition gamma}
\end{align}
Since $\delta<\min \{2\gamma-3 - \frac{\Gact}{2}, \gamma+\nu-4, (\nu-1)\gamma-2-(\nu-2)\Gact\}$, we have $\gamma>\gamma(d,\nu,\sigma)$, where $\gamma(d,\nu,\sigma)$ is the (larger if there are two) root of the equation
\[
\Gact(2-\gamma)(d-5+(8-d)\sigma) = \min \left\{ \gamma-1-\frac{\Gact}{2}, \nu-2, (\nu-2) (\gamma-\Gact) \right\} (\gamma-\Gact)\sigma.
\]
This completes the bootstrap argument and $(\ref{scaling morawetz norm bound})$ follows. Thus, $(\ref{small condition})$ holds for all $T\in \R$.  \newline
\indent We now estimate $\|u(T)\|_{H^\gamma_x}$. To do so, we use the conservation of mass, the scaling $(\ref{scaling definition})$ and $(\ref{property 4})$ to have
\begin{align*}
\|u(T)\|_{H^\gamma_x} &\lesssim \|u(T)\|_{L^2_x}+ \|u(T)\|_{\dot{H}^\gamma_x} \\
&\lesssim \|u_0\|_{L^2_x} + \lambda^{\gamma-\Gact} \|u_\lambda(\lambda^4T)\|_{\dot{H}^\gamma_x} \\
&\lesssim \|u_0\|_{L^2_x} + \lambda^{\gamma-\Gact} \|Iu_\lambda(\lambda^4T)\|_{H^2_x} \\
&\lesssim \|u_0\|_{L^2_x} + \lambda^{\gamma-\Gact} \left(\|u_\lambda(\lambda^4T) \|_{L^2_x} + \|Iu_\lambda(\lambda^4T)\|_{\dot{H}^2_x}\right).
\end{align*}
Using $(\ref{small condition})$, we get for all $T\in \R$,
\[
\|u(T)\|_{H^\gamma_x} \lesssim \|u_0\|_{L^2_x} + \lambda^{\gamma-\Gact} (\lambda^\Gact \|u_0\|_{L^2_x}+1) \leq C(\|u_0\|_{H^\gamma_x}).
\]
Here we use $(\ref{choice of lambda})$ with the fact that $N$ is chosen sufficiently large depending only on $\|u_0\|_{H^\gamma_x}$. This proves $(\ref{global bound sobolev norm})$ and the proof of Proposition $\ref{prop uniform bound}$ is complete.
\end{proof}
\subsection*{Scattering} We firstly show that the global Morawetz estimate $(\ref{global bound morawetz norm})$ can be upgraded to the global Strichartz estimate
\begin{align}
\|u\|_{S^\gamma(\R)} := \sup_{(p,q)\in B} \|\scal{\nabla}^\gamma u\|_{L^p_t(\R, L^q_x)} \leq C(\|u_0\|_{H^\gamma_x}). \label{global bound strichartz norm}
\end{align}
Here we refer to Section 2 for the definition of $(p,q)\in B$. Let $u$ be a global solution to (NL4S) with initial data $u_0\in H^\gamma(\R^d)$ for $5\leq d\leq 11$ and $\gamma(d,\nu)<\gamma<2$. Using the uniform bound $(\ref{global bound morawetz norm})$, we can decompose $\R$ into a finite number of disjoint intervals $J_k=[t_k,t_{k+1}], k=1,...,L$ so that 
\begin{align}
\|u\|_{M^\sigma(J_k)} \leq \delta, \label{spliting morawetz norm}
\end{align}
for a small constant $\delta>0$ to be chosen later. By Strichartz estimates $(\ref{strichartz estimate biharmonic})$ and $(\ref{strichartz estimate biharmonic 4order})$, we have
\begin{align}
\|u\|_{S^\gamma(J_k)} \lesssim \|\scal{\nabla}^\gamma u(t_k)\|_{L^2_x} + \|F(u)\|_{L^2_t (J_k,L^{\frac{2d}{d+4}}_x)} + \||\nabla|^{\gamma-1} F(u)\|_{L^2_t(J_k, L^{\frac{2d}{d+2}}_x)}. \label{strichart 1}
\end{align}
We estimate for some $\varepsilon>0$,
\begin{align}
\||\nabla|^{\gamma-1} F(u)\|_{L^2_t(J_k,L^{\frac{2d}{d+2}}_x)} &\lesssim \||\nabla|^{\gamma-1}u\|_{L^{2+\varepsilon}_t(J_k,L^{\frac{2d(2+\varepsilon)}{(d-2)(2+\varepsilon)-8}}_x)} \|F'(u)\|_{L^{\frac{2(2+\varepsilon)}{\varepsilon}}_t(J_k, L^{\frac{d(2+\varepsilon)}{2\varepsilon+8}}_x)} \nonumber \\
&\lesssim \||\nabla|^{\gamma} u\|_{L^{2+\varepsilon}_t(J_k, L^{\frac{2d(2+\varepsilon)}{d(2+\varepsilon)-8}}_x) } \|u\|^{\nu-1}_{L^{\frac{2(\nu-1)(2+\varepsilon)}{\varepsilon}}_t(J_k, L^{\frac{d(\nu-1)(2+\varepsilon)}{2\varepsilon+8}}_x)} \nonumber \\
&\lesssim \|u\|_{S^\gamma(J_k)} \|u\|^{\nu-1}_{L^{\frac{2(\nu-1)(2+\varepsilon)}{\varepsilon}}_t(J_k, L^{\frac{d(\nu-1)(2+\varepsilon)}{2\varepsilon+8}}_x)}. \label{strichartz 2}
\end{align}
Similarly,
\begin{align}
\||F(u)\|_{L^2_t(J_k,L^{\frac{2d}{d+4}}_x)} &\lesssim \|u\|_{L^{2+\varepsilon}_t(J_k,L^{\frac{2d(2+\varepsilon)}{d(2+\varepsilon)-8}}_x)} \|F'(u)\|_{L^{\frac{2(2+\varepsilon)}{\varepsilon}}_t(J_k, L^{\frac{d(2+\varepsilon)}{2\varepsilon+8}}_x)} \nonumber \\
&\lesssim \|u\|_{L^{2+\varepsilon}_t(J_k, L^{\frac{2d(2+\varepsilon)}{d(2+\varepsilon)-8}}_x) } \|u\|^{\nu-1}_{L^{\frac{2(\nu-1)(2+\varepsilon)}{\varepsilon}}_t(J_k, L^{\frac{d(\nu-1)(2+\varepsilon)}{2\varepsilon+8}}_x)} \nonumber \\
&\lesssim \|u\|_{S^\gamma(J_k)} \|u\|^{\nu-1}_{L^{\frac{2(\nu-1)(2+\varepsilon)}{\varepsilon}}_t (J_k, L^{\frac{d(\nu-1)(2+\varepsilon)}{2\varepsilon+8}}_x)}. \label{strichartz 3}
\end{align}
We now need the following result.
\begin{lem} \label{lem scattering norm control}
Let $d\geq 5$, $0< \sigma\leq \gamma <2$ be such that $\frac{(d-4)\sigma}{d-5+4\sigma} <\gamma$ and $\frac{8}{d}<\nu-1<\frac{8}{d-2\gamma}$. Then there exists $\varepsilon>0$ small such that for any time interval $J$,
\begin{align}
\|u\|^{\nu-1}_{L^{\frac{2(\nu-1)(2+\varepsilon)}{\varepsilon}}_t(J, L^{\frac{d(\nu-1)(2+\varepsilon)}{2\varepsilon+8}}_x)} \lesssim \|u\|^{\frac{\varepsilon(d-5+4\sigma)}{2(2+\varepsilon)\sigma}}_{M^\sigma(J)} \|u\|^{\alpha(\varepsilon)}_{L^\infty_t(J, L^2_x)} \|u\|^{\beta(\varepsilon)}_{L^\infty_t(J, \dot{H}^\gamma_x)}, \label{scattering norm control}
\end{align}
where
\[
\alpha(\varepsilon):= \Big(1-\frac{d}{2\gamma}\Big)(\nu-1) + \frac{16\sigma+\varepsilon((d+4)\sigma-\gamma(d-5+4\sigma))}{2\gamma\sigma(2+\varepsilon)}, \quad \beta(\varepsilon):= \frac{d}{\gamma}\Big(\frac{\nu-1}{2}-\frac{16+\varepsilon(d+4)}{2d(2+\varepsilon)}\Big).
\]
\end{lem}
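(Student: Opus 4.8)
The plan is to recognize the mixed norm on the left as an interpolant — at each fixed time and then in time — of the interaction Morawetz norm $\|\cdot\|_{M^\sigma(J)}=\|\cdot\|_{L^{\frac{d-5+4\sigma}{\sigma}}_t(J,L^{\frac{2(d-5+4\sigma)}{d-5+2\sigma}}_x)}$ on the one hand and of the two energy-type norms $\|\cdot\|_{L^\infty_t(J,L^2_x)}$ and $\|\cdot\|_{L^\infty_t(J,\dot H^\gamma_x)}$ on the other, the latter being linked through the Sobolev embedding $\dot H^\gamma\hookrightarrow L^{\frac{2d}{d-2\gamma}}$ (valid since $0<\gamma<2<d/2$).

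Concretely, write $p_0:=\frac{d-5+4\sigma}{\sigma}$, $q_0:=\frac{2(d-5+4\sigma)}{d-5+2\sigma}$, $q_\ast:=\frac{2d}{d-2\gamma}$, and let $a:=\frac{2(\nu-1)(2+\varepsilon)}{\varepsilon}$, $b:=\frac{d(\nu-1)(2+\varepsilon)}{2\varepsilon+8}$ be the target exponents. Put $\tau:=p_0/a=\frac{\varepsilon(d-5+4\sigma)}{2\sigma(\nu-1)(2+\varepsilon)}$, which lies in $(0,1)$ as soon as $\varepsilon$ is small (indeed $a\to\infty$ while $p_0$ stays fixed). Choosing nonnegative weights $\tau,\tau_2,\tau_1$ with $\tau+\tau_2+\tau_1=1$ and $\frac1b=\frac{\tau}{q_0}+\frac{\tau_2}{2}+\frac{\tau_1}{q_\ast}$, Hölder in $x$ gives $\|u(t)\|_{L^b_x}\le\|u(t)\|_{L^{q_0}_x}^{\tau}\|u(t)\|_{L^2_x}^{\tau_2}\|u(t)\|_{L^{q_\ast}_x}^{\tau_1}$; then Hölder in $t$ (with conjugate exponents $p_0/\tau=a$ and $\infty$) followed by the Sobolev embedding produces
\[
\|u\|_{L^a_t(J,L^b_x)}\lesssim\|u\|_{M^\sigma(J)}^{\tau}\,\|u\|_{L^\infty_t(J,L^2_x)}^{\tau_2}\,\|u\|_{L^\infty_t(J,\dot H^\gamma_x)}^{\tau_1}.
\]
Raising to the power $\nu-1$ yields an inequality of exactly the stated shape, with exponents $\tau(\nu-1)$, $\tau_2(\nu-1)$, $\tau_1(\nu-1)$ in the three slots.

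It then remains to identify the exponents and check admissibility. One sees at once that $\tau(\nu-1)=\frac{\varepsilon(d-5+4\sigma)}{2(2+\varepsilon)\sigma}$, matching the Morawetz slot; and from the two linear relations fixing $\tau_1,\tau_2$ — together with the ``scaling identity'' $\tau(\nu-1)+\alpha(\varepsilon)+\beta(\varepsilon)=\nu-1$, which is a direct simplification of the definitions of $\alpha,\beta$ — one gets $\tau_2(\nu-1)=\alpha(\varepsilon)$ and $\tau_1(\nu-1)=\beta(\varepsilon)$. The genuinely delicate point is producing an admissible $\varepsilon>0$: the weights $\tau_1,\tau_2$ must be nonnegative, equivalently the interpolation exponent $c(\varepsilon)$ defined by $\frac{1-\tau}{c(\varepsilon)}=\frac1b-\frac{\tau}{q_0}$ must satisfy $2\le c(\varepsilon)<q_\ast$. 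As $\varepsilon\to0^+$ one has $\tau\to0$ and $c(\varepsilon)\to\frac{d(\nu-1)}{4}$, and the open conditions $2<\frac{d(\nu-1)}{4}<\frac{2d}{d-2\gamma}$ are precisely the hypotheses $\frac8d<\nu-1<\frac8{d-2\gamma}$, while $\frac{(d-4)\sigma}{d-5+4\sigma}<\gamma$ takes care of the remaining bookkeeping on $\tau$ and $b$; hence a continuity/openness argument furnishes a whole right-neighborhood of $0$ of admissible $\varepsilon$.

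I expect the only real obstacle to be this uniform admissibility bookkeeping — keeping every Lebesgue exponent in $[2,\infty)$, keeping the Sobolev embedding $\dot H^\gamma\hookrightarrow L^{q_\ast}$ applicable, and keeping the interpolation weights nonnegative, all simultaneously and for $\varepsilon$ ranging over a nondegenerate interval near $0$ — since the Hölder and interpolation mechanics themselves are entirely routine.
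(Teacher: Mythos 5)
Your proposal is correct and follows essentially the same route as the paper: Hölder in $x$ and $t$ to interpolate the mixed norm between the Morawetz norm $M^\sigma(J)$ and the $L^\infty_t$-based $L^2_x$ and $L^{\frac{2d}{d-2\gamma}}_x$ norms (the latter via the Sobolev embedding $\dot H^\gamma_x\hookrightarrow L^{\frac{2d}{d-2\gamma}}_x$), followed by the observation that the exponents tend to positive limits as $\varepsilon\to 0^+$ under the hypotheses $\frac{8}{d}<\nu-1<\frac{8}{d-2\gamma}$ and $\frac{(d-4)\sigma}{d-5+4\sigma}<\gamma$. The only cosmetic difference is that you perform a single three-way Hölder in $x$ where the paper chains two two-way interpolations, and your scaling identity $\tau(\nu-1)+\alpha(\varepsilon)+\beta(\varepsilon)=\nu-1$ checks out against the paper's exponents $\alpha=\theta_2(1-\theta_1)(\nu-1)$, $\beta=(1-\theta_2)(1-\theta_1)(\nu-1)$.
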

\begin{proof}
We firstly use H\"older's inequality to have
\begin{align}
\|u\|_{L^{\frac{2(\nu-1)(2+\varepsilon)}{\varepsilon}}_t(J, L^{\frac{d(\nu-1)(2+\varepsilon)}{2\varepsilon+8}}_x)} \leq \|u\|^{\theta_1}_{M^\sigma(J)} \|u\|^{1-\theta_1}_{L^\infty_t(J,L^q_x)}, \label{scattering 1}
\end{align}
provided that
\begin{align*}
\frac{1-\theta_1}{q}= \frac{4(2\varepsilon+8)\sigma-d\varepsilon(d-5+2\sigma)}{4d\sigma(\nu-1)(2+\varepsilon)} \quad \text{and}\quad \theta_1:= \frac{\varepsilon(d-5+4\sigma)}{2(\nu-1)(2+\varepsilon)\sigma}.\nonumber 
\end{align*}
Similarly, 
\begin{align}
\|u\|_{L^\infty_t(J, L^q_x)} \lesssim \|u\|^{\theta_2}_{L^\infty_t(J,L^2_x)} \|u\|^{1-\theta_2}_{L^\infty_t(J,L^{\frac{2d}{d-2\gamma}}_x)} \lesssim \|u\|^{\theta_2}_{L^\infty_t(J,L^2_x)} \|u\|^{1-\theta_2}_{L^\infty_t(J,\dot{H}^\gamma_x)}, \label{scattering 2}
\end{align}
provided that
\begin{align*}
\frac{1}{q}=\frac{\theta_2}{2}+\frac{(1-\theta_2)(d-2\gamma)}{2d}. 
\end{align*} 
Thus, by $(\ref{scattering 1})$ and $(\ref{scattering 2})$, a direct consequence gives
\[
\|u\|^{\nu-1}_{L^{\frac{2(\nu-1)(2+\varepsilon)}{\varepsilon}}_t(J, L^{\frac{d(\nu-1)(2+\varepsilon)}{2\varepsilon+8}}_x)} \lesssim \|u\|^{\frac{\varepsilon(d-5+4\sigma)}{2(2+\varepsilon)\sigma}}_{M^\sigma(J)} \|u\|_{L^\infty_t(J, L^2_x)}^{\alpha(\varepsilon)} \|u\|_{L^\infty_t(J,\dot{H}^\gamma_x)}^{\beta(\varepsilon)},
\]
where
\begin{align*}
\alpha(\varepsilon)&:=\theta_2(1-\theta_1)(\nu-1) = \Big(1-\frac{d}{2\gamma}\Big)(\nu-1) + \frac{16\sigma+\varepsilon((d+4)\sigma-\gamma(d-5+4\sigma))}{2\gamma\sigma(2+\varepsilon)}, \\
\beta(\varepsilon)&:=(1-\theta_2)(1-\theta_1)(\nu-1)=\frac{d}{\gamma}\Big(\frac{\nu-1}{2}-\frac{16+\varepsilon(d+4)}{2d(2+\varepsilon)}\Big).
\end{align*}
In order to perform the above estimates, we need $\alpha(\varepsilon)>0$ and $\beta(\varepsilon)>0$. We note that $\varepsilon \mapsto \alpha(\varepsilon)$ and $\varepsilon\mapsto \beta(\varepsilon)$ are decreasing functions provided that $\gamma>\frac{(d-4)\sigma}{d-5+4\sigma}$. Moreover, since
\[
\alpha(\varepsilon)\rightarrow \Big(1-\frac{d}{2\gamma}\Big)(\nu-1) +\frac{4}{\gamma}, \quad \beta(\varepsilon)\rightarrow \frac{d}{\gamma}\Big(\frac{\nu-1}{2}-\frac{4}{d}\Big) \text{ as } \varepsilon \rightarrow 0.
\]
As $\frac{8}{d}<\nu-1<\frac{8}{d-2\gamma}$, the two limits are positive. Thus by taking $\varepsilon>0$ small enough, we have $\alpha(\varepsilon)>0$ and $\beta(\varepsilon)>0$. The proof is complete.
\end{proof}
\begin{rem} \label{rem scattering}
It is easy to see that the function $\sigma \in (0,\gamma]\mapsto \frac{(d-4)\sigma}{d-5+4\sigma}$ is increasing and attains its maximal value at $\sigma=\gamma$. In this case, the condition $\frac{(d-4)\sigma}{d-5+4\sigma}<\gamma$ becomes $\gamma>\frac{1}{4}$ which is always satisfied in our consideration.
\end{rem}
We now continue the proof of scattering property. By $(\ref{strichart 1}), (\ref{strichartz 2}), (\ref{strichartz 3})$ and Lemma $\ref{lem scattering norm control}$, we have
\begin{align}
\|u\|_{S^\gamma(J_k)} &\lesssim \|\scal{\nabla}^\gamma u(t_k)\|_{L^2_x} + \|u\|_{S^\gamma(J_k)}\|u\|^{\frac{\varepsilon(d-5+4\sigma)}{2(2+\varepsilon)\sigma}}_{M^\sigma(J_k)} \|u\|^{\alpha(\varepsilon)}_{L^\infty_t(J_k,L^2_x)} \|u\|^{\beta(\varepsilon)}_{L^\infty_t(J_k, \dot{H}^\gamma_x)} \nonumber \\
&\lesssim \|\scal{\nabla}^\gamma u(t_k)\|_{L^2_x} + \|u\|_{S^\gamma(J_k)} \|u\|^{\frac{\varepsilon(d-5+4\sigma)}{2(2+\varepsilon)\sigma}}_{M^\sigma(J_k)} \|u\|^{\alpha(\varepsilon)+\beta(\varepsilon)}_{L^\infty_t(J_k,H^\gamma_x)}. \label{strichartz final}
\end{align}
This shows that
\[
\|u\|_{S^\gamma(J_k)} \lesssim \|\scal{\nabla}^\gamma u(t_k)\|_{L^2_x} + \|u\|_{S^\gamma(J_k)} \delta^{\frac{\varepsilon(d-5+4\sigma)}{2(2+\varepsilon)\sigma}} C(\|u_0\|_{H^\gamma_x}).
\]
By taking $\delta>0$ small enough, we get
\[
\|u\|_{S^\gamma(J_k)} \lesssim \|\scal{\nabla}^\gamma u(t_k)\|_{L^2_x} \leq C(\|u_0\|_{H^\gamma_x}).
\]
This proves $(\ref{global bound strichartz norm})$. \newline
\indent We next use the global Strichartz bound $(\ref{global bound strichartz norm})$ to prove the scattering property, i.e. there exist unique $u^\pm_0 \in H^\gamma_x$ such that 
\[
\lim_{t\rightarrow \pm \infty} \|u(t)-e^{it\Delta^2} u^\pm_0\|_{H^\gamma_x} =0.
\]
By the time reversal symmetry, it is enough to treat the positive time only. We will show that $e^{-it\Delta^2} u(t)$ has limits in $H^\gamma_x$ as $t\rightarrow +\infty$. By Duhamel formula,
\[
e^{-it\Delta^2}u(t) = u_0 +i\int_0^t e^{-is\Delta^2}F(u(s))ds.
\]
For $0<t_1<t_2$, we have
\[
e^{-it_2\Delta^2} u(t_2)-e^{-it_1\Delta^2}u(t_1) = i\int_{t_1}^{t_2} e^{-is\Delta^2} F(u(s))ds.
\]
By Strichartz estimates $(\ref{strichartz estimate biharmonic}), (\ref{strichartz estimate biharmonic 4order})$ and estimating as in $(\ref{strichartz final})$, 
\begin{align*}
\|e^{-it_2\Delta^2} u(t_2)-e^{-it_1\Delta^2}u(t_1)\|_{H^\gamma_x} &\lesssim \Big\| i\int_{t_1}^{t_2} e^{-is\Delta^2} F(u(s))ds\Big\|_{H^\gamma_x} \\
&\lesssim \|F(u)\|_{L^2_t([t_1,t_2],L^{\frac{2d}{d+4}}_x)} + \||\nabla|^{\gamma-1} F(u)\|_{L^2_t([t_1,t_2], L^{\frac{2d}{d+2}}_x)} \\
&\lesssim \|u\|_{S^\gamma([t_1,t_2])} \|u\|^{\frac{\varepsilon(d-5+4\sigma)}{2(2+\varepsilon)\sigma}}_{M^\sigma([t_1,t_2])} \|u\|^{\alpha(\varepsilon)+\beta(\varepsilon)}_{L^\infty_t([t_1,t_2],H^\gamma_x)} \\
&\lesssim \|u\|^{1+\alpha(\varepsilon)+\beta(\varepsilon)}_{S^\gamma([t_1,t_2])} \|u\|^{\frac{\varepsilon(d-5+4\sigma)}{2(2+\varepsilon)\sigma}}_{M^\sigma([t_1,t_2])}.
\end{align*}
This implies that $\|e^{-it_2\Delta^2} u(t_2)-e^{-it_1\Delta^2}u(t_1)\|_{H^\gamma_x} \rightarrow 0$ as $t_1, t_2 \rightarrow +\infty$. Hence the limit
\[
u^+_0:=\lim_{t\rightarrow +\infty} e^{-it\Delta^2}u(t)
\]
exists in $H^\gamma_x$. Moreover, 
\[
u(t)-e^{it\Delta^2}u^+_0=-i\int_t^{+\infty} e^{i(t-s)\Delta^2} F(u(s))ds.
\]
A same argument as above shows that
\[
\|u(t)-e^{it\Delta^2}u^+_0\|_{H^\gamma_x} \rightarrow 0
\]
as $t\rightarrow+\infty$. The proof is now complete.
\section*{Acknowledgments}
The author would like to express his deep thanks to his wife - Uyen Cong for her encouragement and support. He would like to thank his supervisor Prof. Jean-Marc Bouclet for the kind guidance and constant encouragement. He also would like to thank the reviewer for his/her helpful comments and suggestions.


\end{document}